\newtheorem{thm}{Theorem}[section]
\newtheorem{prop}[thm]{Proposition}
\newtheorem{lemma}[thm]{Lemma}
\newtheorem{cor}[thm]{Corollary}
\newtheorem{definition}[thm]{Definition}
\newtheorem{remark}[thm]{Remark}
\newtheorem{problem}[thm]{Problem}
\numberwithin{equation}{section}
\def\bR{\mathbb{R}}
\def\bN{\mathbb{N}}
\def\cH{\mathcal{H}}
\def\bP{\mathbb{P}}
\def\tr{\mathrm{tr}}
\def\<{\langle}
\def\>{\rangle}
\def\dT{d_{\mathrm T}}
\def\eps{\varepsilon}
\def\bw{\mathbf{w}}
\def\cA{\mathcal{A}}
\def\cB{\mathcal{B}}
\def\cP{\mathcal{P}}
\def\cU{\mathcal{U}}
\def\cL{\mathcal{L}}
\def\ffi{\varphi}
\def\cPP{\cP^\infty(\bP)}
\def\fM{\mathfrak{M}}
\def\cK{\mathcal{K}}
\def\cN{\mathcal{N}}
\def\cO{\mathcal{O}}
\def\supp{\mathrm{supp}}
\begin{document}
\baselineskip=15pt
\allowdisplaybreaks

\title[Operator means of probability measures]
{Operator means of probability measures}
\author[F.~Hiai]{Fumio Hiai}
\address{Graduate School of Information Sciences, Tohoku University,
Aoba-ku, Sendai 980-8579, Japan}
\email{hiai.fumio@gmail.com}
\author[Y.~Lim]{Yongdo Lim}
\address{Department of Mathematics, Sungkyunkwan University, Suwon 440-746, Korea}
\email{ylim@skku.edu}
\date{\today}
\maketitle

\begin{abstract}
Let $\bP$ be the complete metric space consisting of positive
invertible operators on an infinite-dimensional Hilbert space with
the Thompson metric. We introduce the notion of operator means of
probability measures on $\bP$, in parallel with Kubo and Ando's
definition of two-variable operator means, and show that every
operator mean is contractive for the $\infty$-Wasserstein distance.
By means of a fixed point method we consider deformation of such
operator means, and show that the deformation of any operator mean
becomes again an operator mean in our sense. Based on this
deformation procedure we prove a number of properties and
inequalities for operator means of probability measures.

\bigskip
\noindent \textit{2010 Mathematics Subject Classification}.
Primary 47A64; Secondary 47B65, 47L07, 58B20

\noindent \textit{Key words and phrases.} Operator mean, Positive
invertible operator, Borel probability measure,
Thompson metric, Stochastic order, Deformed operator mean,
Arithmetic mean, Harmonic mean, Karcher mean, Power mean,
Wasserstein distance, Barycenter, Log-majorization, Minkowski
determinant inequality
\end{abstract}

\section{Introduction}

A systematic study of two-variable operator means of positive operators on a Hilbert space
$\cH$ began with the paper of Kubo and Ando \cite{KA}. There is one-to-one correspondence
between the operator means $\sigma$ in the sense of Kubo-Ando and the positive operator
monotone functions $f$ on $[0,\infty)$ with $f(1)=1$ in such a way that
$A\sigma B=a^{1/2}f(A^{-1/2}BA^{-1/2})A^{1/2}$ for positive invertible operators $A,B$ on
$\cH$. The geometric mean, first introduced by Pusz and Woronowicz \cite{PW} and then
discussed by Ando \cite{An} in detail, is a two-variable operator mean that has been paid
the most attention. It was a long-standing problem to extend the notion of the geometric
mean to the case of more than two variables of matrices/operators. A breakthrough came when
the definitions of multivariate geometric means of positive definite matrices appeared
in the iteration approach by Ando, Li and Mathias \cite{ALM} and in the Riemannian geometry
approach by Moakher \cite{Mo} and by Bhatia and Holbrook \cite{BhHo2}. Since then, the
Riemannian multivariate operator means, in particular, the Karcher mean (the
generalization of the geometric mean) and the power means, have extensively been developed
by many authors, see among others \cite{LL1,BK,LP}. Furthermore, those multivariate
operator means have recently been generalized to probability measures on the positive
definite matrices in connection with the Wasserstein distance, see, e.g.,
\cite{KL,LL2,HL,HL2}.

We write $\bP$ for the set of positive invertible operators on the Hilbert space $\cH$. An
imortant point in the Riemannian geometry approach to operator means when $\dim\cH<\infty$ is
that $\bP$ forms a Riemannian manifold with non-positive curvature (referred to as a global
NPC space \cite{St}). Even when $\dim\cH=\infty$, $\bP$ is a Banach-Finsler manifold with the
Thompson metric, although it can no longer have a Riemannian manifold structure. Thus, we can
study operator means of probability measures on $\bP$ in connection with theory of contractive
barycenters with respect to the Wasserstein distance and related stochastic analysis (e.g.,
ergodic theorems), developed in the framework of complete metric spaces in, e.g.,
\cite{EH,St02,St,HL}. Moreover, operator means of probability measures, in turn, provide good
examples of contractive barycenters.

In recent study of operator means, the fixed point method, apart
from the Riemannian geometry method, provides a main technical tool
as used in different places in, e.g.,
\cite{KL,KLL2,LL0,LL,LP,LP3,LP2,Pa,Ya}. In particular, the Karcher
and the power means are defined as the solutions to certain fixed
point type equations. In this status of the subject matter, our aim
of the present paper is to systematically develop the fixed point
method for operator means of Borel probability measures on $\bP$
with bounded support. In our approach, we apply the fixed point
method based on monotone convergence of Borel probability measures
in terms of the strong operator topology, where the stochastic order of
probability measures discussed in \cite{HLL} plays a key role. The idea
using monotone convergence is essentially in a similar vein to that of
Kubo-Ando's definition of two-variable operator means. In previous studies
of the subject in the fixed point method, a primary tool is the Banach
contraction principle, which we never use in the present paper.
Thus, the class of operator means of multivariables and probability
measures studied in the paper is considerably wider than those in
other papers so far.

The paper is organized as follows. We write $\cPP$ for the set of Borel probability measures
on $\bP$ with bounded support (of full measure). In Section 2 we first fix the notion of
monotone convergence for a sequence of probability measures in $\cPP$ (see Definition
\ref{D-2.3}), which plays a primary role in our study as mentioned above. We then give the
definition of operator means (see Definition \ref{D-2.5})
$$
M:\cPP\ \longrightarrow\ \bP
$$
in parallel with Kubo-Ando's definition of two-variable operator means, where one important
requirement is the monotone continuity that if $\mu_k\nearrow\mu$ or $\mu_k\searrow\mu$ in
$\cPP$, then $M(\mu_k)\to M(\mu)$ in the strong operator topology. It is also shown here
that every operator means on $\cPP$ automatically has the contractivity for the
$\infty$-Wasserstein distance with respect to the Thompson metric.

In Section 3 we present the main theorem (Theorem \ref{T-3.1}) of the paper. For an
operator mean $M$ on $\cPP$ and a two-variable operator mean $\sigma$ ($\ne$ the left
trivial mean) and for any $\mu\in\cPP$, the theorem says that the fixed point type equation
$$
X=M(X\sigma\mu)\qquad\mbox{for $X\in\bP$},
$$
where $X\sigma\mu$ is the push-forward of $\mu$ by the map $A\in\bP\mapsto X\sigma A\in\bP$,
has a unique solution, and if we denote the solution by $M_\sigma(\mu)$, then $M_\sigma$
is an operator mean on $\cPP$ again. We call $M_\sigma$ the deformed operator mean from
$M$ by $\sigma$. The notion of deformed operator means is considered in some sense as an
extended version of the generalized operator means by P\'alfia [25] (see Remark \ref{R-3.2}).
The deformation procedure $M\to M_\sigma$ has the order property that
\begin{align}\label{F-1.1}
X\le M(X\sigma\mu)\,\implies\,X\le M_\sigma(\mu),\quad
X\ge M(X\sigma\mu)\,\implies\,X\ge M_\sigma(\mu).
\end{align}
This property is quite useful in later discussions. In Section 4 we prove that several
basic properties such as congruence invariance and concavity are preserved under the
procedure $M\to M_\sigma$.

In Section 5 we show that all of the arithmetic, the harmonic, the
Karcher, and the power means are operator means on $\cPP$ in our
sense. By starting from those familiar means and by taking deformed
operator means again and again, we have a rich class of operator
means on $\cPP$. By applying the property in \eqref{F-1.1} we can
show many inequalities for operator means on $\cPP$. For instance,
in Section 6, the inequality under positive linear maps and
Ando-Hiai type inequalities are obtained for certain classes of
operator means on $\cPP$. Furthermore in Section 7, when $\cH$ is
finite-dimensional, a certain norm inequality, eigenvalue
majorizations and the Minkowski determinant inequality are obtained.

\section{Definitions}

Let $\cH$ be a \emph{separable} Hilbert space and $B(\cH)$ be the algebra of all bounded
linear operators on $\cH$. Let $B(\cH)^+$ be the set of positive (not necessarily invertible)
operators in $B(\cH)$, and $\bP=\bP(\cH)$ be the set of positive invertible operators in
$B(\cH)$. For self-adjoint $A,B\in B(\cH)$, $A\le B$ means that $B-A\in B(\cH)^+$. The
\emph{Thompson metric} $\dT$ on $\bP$ is defined by
$$
\dT(A,B):=\log\max\{M(A/B),M(B/A)\}=\|\log
A^{-1/2}BA^{-1/2}\|,\quad A,B\in\bP,
$$
where $\|\cdot\|$ denotes the operator norm on $B(\cH)$ and
$M(A/B):=\inf\{\alpha>0:A\le\alpha B\}$. The $\dT$-topology is equivalent to the operator norm
topology on $\bP$, and $(\bP,\dT)$ becomes a complete metric space, see \cite{Th}. On the other
hand, the strong operator topology is denoted by \emph{SOT}.

Let $\cP(\bP)$ be the set of Borel probability measures $\mu$ on $\bP$ with full support, i.e.,
$\mu(\supp(\mu))=1$, where $\supp(\mu)$ denotes the support of $\mu$. We denote by $\cPP$ the
set of $\mu\in\cP(\bP)$ whose support is bounded in the sense that the support of $\mu$ is included in
$$
\Sigma_\eps:=\{A\in\bP:\eps I\le A\le\eps^{-1}I\}
$$
for some $\eps\in(0,1)$. For any $\eps\in(0,1)$ the subset $\Sigma_\eps$ of $\bP$ with SOT
is metrizable by the metric
\begin{align}\label{F-2.1}
d_\eps(A,B):=\sum_{n=1}^\infty{1\over2^n}\|(A-B)x_n\|,\qquad A,B\in\Sigma_\eps,
\end{align}
where $\{x_n\}_{n=1}^\infty$ is dense in $\{x\in\cH:\|x\|\le1\}$,
see, e.g., \cite[p.\,262]{Co}. As easily verified, the above metric
is complete on $\Sigma_\eps$ so that $(\Sigma_\eps,d_\eps)$ becomes a
Polish space.

\begin{definition}\label{D-2.1}\rm
A set $\cU\subset\bP$ is said to be an \emph{upper set} if $B\in\bP$ and $B\ge A$ for some
$A\in\cU$ then $B\in\cU$. Also, a set $\cL\subset\bP$ is a \emph{lower set} if $B\in\bP$
and $B\le A$ for some $A\in\cL$ then $B\in\cL$. For $\mu,\nu\in\cP(\bP)$ we write $\mu\le\nu$
if $\mu(\cU)\le\nu(\cU)$ for every upper closed set $\cU$, or equivalently, if
$\mu(\cL)\ge\nu(\cL)$ for every lower closed set $\cL$. It is known
\cite[Propositions 3.6 and 3.11]{HLL} that $\mu\le\nu$ if and only if
$\int_\bP f(A)\,d\mu(A)\le\int_\bP f(A)\,d\nu(A)$ for any monotone (bounded) Borel function
$f:\bP\to\bR^+:=[0,\infty)$, or equivalently, for any monotone (bounded) continuous (in the
operator norm) function $f:\bP\to\bR^+$. Here, $f$ is \emph{monotone} if $A\le B$ in $\bP$
implies $f(A)\le f(B)$.
\end{definition}

\begin{lemma}\label{L-2.2}
Assume that $\mu_1,\mu_2\in\cPP$ and $\mu_1\le\mu_2$. Then there exists an $\eps\in(0,1)$
such that all $\mu\in\cPP$ with $\mu_1\le\mu\le\mu_2$ are supported on $\Sigma_\eps$.
\end{lemma}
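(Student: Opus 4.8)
The plan is to produce a single $\eps$ depending only on $\mu_1,\mu_2$, by bounding any admissible $\mu$ from above using the comparison $\mu\le\mu_2$ and from below using $\mu_1\le\mu$. Since $\mu_1,\mu_2\in\cPP$, I would first fix $\eps_1,\eps_2\in(0,1)$ with $\supp(\mu_i)\subseteq\Sigma_{\eps_i}$; the claim will then follow with $\eps:=\min\{\eps_1,\eps_2\}$. The key point is that each of the two operator bounds defining $\Sigma_\eps$ can be encoded by a family of upper (resp.\ lower) closed sets, to which the stochastic-order characterization in Definition \ref{D-2.1} directly applies.

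For the upper bound, consider for $c>0$ the set $\cU_c:=\{A\in\bP:\|A\|\ge c\}$. Because $A\le B$ in $B(\cH)^+$ implies $\|A\|\le\|B\|$, the set $\cU_c$ is an upper set, and it is closed in $\bP$ since the norm is continuous. As $\supp(\mu_2)\subseteq\Sigma_{\eps_2}$ gives $\|A\|\le\eps_2^{-1}$ for $\mu_2$-a.e.\ $A$, we have $\mu_2(\cU_c)=0$ whenever $c>\eps_2^{-1}$; hence $\mu(\cU_c)\le\mu_2(\cU_c)=0$ by $\mu\le\mu_2$. Letting $c$ decrease to $\eps_2^{-1}$ along a sequence yields $\|A\|\le\eps_2^{-1}$, i.e.\ $A\le\eps_2^{-1}I$, for $\mu$-a.e.\ $A$.

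For the lower bound I would argue symmetrically with $\cL_c:=\{A\in\bP:\|A^{-1}\|\ge c\}$. Since inversion is order-reversing on $\bP$ (so $B\le A$ implies $A^{-1}\le B^{-1}$ and thus $\|A^{-1}\|\le\|B^{-1}\|$), the set $\cL_c$ is a lower set, and it is closed because $A\mapsto A^{-1}$ is norm-continuous on $\bP$. From $\supp(\mu_1)\subseteq\Sigma_{\eps_1}$ we get $\|A^{-1}\|\le\eps_1^{-1}$ for $\mu_1$-a.e.\ $A$, so $\mu_1(\cL_c)=0$ for $c>\eps_1^{-1}$; the lower-set form of $\mu_1\le\mu$ then gives $\mu(\cL_c)\le\mu_1(\cL_c)=0$, and letting $c\downarrow\eps_1^{-1}$ produces $A\ge\eps_1 I$ for $\mu$-a.e.\ $A$. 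Combining the two bounds, $\eps I\le A\le\eps^{-1}I$ holds $\mu$-a.e., so $\mu(\Sigma_\eps)=1$; since $\Sigma_\eps$ is closed and $\supp(\mu)$ is the smallest closed set of full measure, $\supp(\mu)\subseteq\Sigma_\eps$, as required.

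The main obstacle, and the step needing the most care, is checking that $\cU_c$ and $\cL_c$ really are upper and lower \emph{closed} sets so that Definition \ref{D-2.1} applies --- in particular the monotonicity of the norm on $B(\cH)^+$ and the order-reversing norm-continuity of inversion --- and then converting the vanishing of $\mu(\cU_c)$ (resp.\ $\mu(\cL_c)$) for every $c$ strictly above the threshold into the sharp $\mu$-a.e.\ operator inequality, and finally into the support inclusion. The uniformity of $\eps$ is then automatic, since $\eps_1,\eps_2$ were chosen from $\mu_1,\mu_2$ alone.
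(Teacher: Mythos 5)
Your proof is correct: $\cU_c$ and $\cL_c$ are indeed upper and lower sets, relatively closed in $\bP$ by norm continuity of $A\mapsto\|A\|$ and of inversion; the stochastic order forces $\mu(\cU_c)\le\mu_2(\cU_c)=0$ and $\mu(\cL_c)\le\mu_1(\cL_c)=0$ above the respective thresholds; the countable exhaustion $c\downarrow$ threshold upgrades this to the $\mu$-a.e.\ operator bounds; and a closed set of full measure contains $\supp(\mu)$ (here one uses $\mu(\supp(\mu))=1$, which is built into the definition of $\cP(\bP)$). The paper rests on the same key fact --- the characterization of the stochastic order via upper/lower closed sets in Definition \ref{D-2.1} --- but runs it in the complementary, measure-one direction, which is shorter: $\{A\in\bP:A\ge\eps I\}$ is an upper closed set, so $\mu_1\le\mu$ gives $\mu(A\ge\eps I)\ge\mu_1(A\ge\eps I)=1$, and $\{A\in\bP:A\le\eps^{-1}I\}$ is a lower closed set, so $\mu\le\mu_2$ gives $\mu(A\le\eps^{-1}I)\ge\mu_2(A\le\eps^{-1}I)=1$; intersecting these two events finishes the proof. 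Because those order-defined sets are already closed, the paper needs no norm-level sets, no continuity checks, and no limiting argument in $c$; these enter your proof only because the complement of a closed upper set is an open (not closed) lower set, forcing you to exhaust it by the closed sets $\cL_c$, and likewise for $\cU_c$. Note also that the two arguments use the hypotheses in dual ways: the paper extracts the lower bound $A\ge\eps I$ from $\mu_1\le\mu$ through a single upper set of full $\mu_1$-measure, whereas you extract it through a family of lower sets of $\mu_1$-measure zero. Your route is perfectly valid, just less economical than working with the measure-one sets directly.
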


\begin{proof}
Choose an $\eps>0$ such that $\mu_1,\mu_2$ are supported on $\Sigma_\eps$. Let
$\mu\in\cPP$ be such that $\mu_1\le\mu\le\mu_2$. Since $\{A\in\bP:A\ge\eps I\}$ is an upper
closed set, $\mu(A\ge\eps I)\ge\mu_1(A\ge\eps I)=1$. Since $\{A\in\bP:A\le\eps^{-1}I\}$ is
a lower closed set, $\mu(A\le\eps^{-1}I)\ge\mu_2(A\le\eps^{-1}I)=1$. Hence
$\mu(\eps I\le A\le\eps^{-1}I)=1$, so $\mu$ is supported on $\Sigma_\eps$.
\end{proof}

In this paper, the next notion of monotone convergence for a sequence of probability
measures in $\cPP$ will play an important role.

\begin{definition}\label{D-2.3}\rm
For $\mu,\mu_k\in\cPP$ ($k\in\bN$) we write
$$
\mu_k\nearrow\mu\qquad(\mbox{resp.,}\ \ \mu_k\searrow\mu)
$$
if the following conditions are satisfied:
\begin{itemize}
\item[(a)] $\mu_1\le\mu_2\le\cdots\le\mu$ (resp., $\mu_1\ge\mu_2\ge\dots\ge\mu$) in the sense
of Definition \ref{D-2.1}. In this case, since $\mu_1\le\mu_k\le\mu$ (resp.,
$\mu_1\ge\mu_k\ge\mu$) for all $k$, by Lemma \ref{L-2.2} there is an $\eps\in(0,1)$ such that
$\mu$ and $\mu_k$ are all supported on $\Sigma_\eps$.
\item[(b)] For any bounded SOT-continuous real function $f$ on $\Sigma_\eps$ where $\eps$ is
chosen in (a),
$$
\int_\bP f(A)\,d\mu_k(A)\ \longrightarrow\ \int_\bP f(A)\,d\mu(A)\quad
\mbox{as $k\to\infty$}.
$$
\end{itemize}
Note that condition (b) is independent of the choice of $\eps\in(0,1)$ in (a); in fact, when
$0<\eps'<\eps$, any bounded SOT-continuous real function on $\Sigma_\eps$ can extend to a
bounded SOT-continuous function on $\Sigma_{\eps'}$. The convergence $\mu_k\to\mu$ in (b)
reduces to the usual weak convergence as Borel probability measures on the Polish space
$(\Sigma_\eps.d_\eps)$ with $d_\eps$ in \eqref{F-2.1}.
\end{definition}

One can define a variant of monotone convergence for probability measures in $\cPP$ by
replacing the SOT-continuity for $f$ with operator norm continuity. When $\cH$ is
infinite-dimensional, the monotone convergence for probability measures in the SOT sense
is strictly weaker than that in the norm sense; in fact, for point measures
$\delta_A,\delta_{A_k}$ with $A,A_k\in\bP$, $\delta_{A_k}\nearrow\delta_A$ in Definition
\ref{D-2.3} means that $A_k\nearrow A$ in SOT, while $\delta_{A_k}\nearrow\delta_A$ in the
norm sense implies that $A_k\to A$ in the operator norm. The monotone convergence in the SOT
sense adopted in Definition \ref{D-2.3} is essential in this paper.

\begin{remark}\label{R-2.4}\rm
The assumption of the Hilbert space $\cH$ being separable is not essential in the paper.
Indeed, when $\cH$ is a general Hilbert space, it is known \cite[Lemma 2.1]{La} that any
probability measure on $(\bP,\cB(\bP))$ has the separable support. All of our results in the
paper are concerned with an at most countable set $\{\mu_k\}$ in $\cPP$. For such $\mu_k$'s
there exists a separable closed subset $\mathcal{X}$ of $\bP$ such that all $\mu_k$ are
supported on $\mathcal{X}$. The $C^*$-algebra generated by $\mathcal{X}$ is faithfully
represented on a separable Hilbert space $\cH_0$, so that we may regard $\mu_k$'s as
probability measures on $\bP(\cH_0)$. Thus we can reduce all our arguments to the separable
Hilbert space case.
\end{remark}

The notion of two-variable operator means was introduced by Kubo and Ando \cite{KA} in an
axiomatic way. A map $\sigma:B(\cH)^+\times B(\cH)^+\to B(\cH)^+$ is called an
\emph{operator mean} if it satisfies the following properties:
\begin{itemize}
\item[(I)] \emph{Monotonicity}: $A\le C$, $B\le D$ $\implies$ $A\sigma B\le C\sigma D$.
\item[(II)] \emph{Transformer inequality}: $C(A\sigma B)C\le(CAC)\sigma(CBC)$ for every
$C\in B(\cH)^+$.
\item[(III)] \emph{Downward continuity in SOT}: $A_k\searrow A$, $B_k\searrow B$ $\implies$
$A_k\sigma B_k\searrow A\sigma B$.
\item[(IV)] \emph{Normalized condition}: $I\sigma I=I$.
\end{itemize}

Each operator mean $\sigma$ is associated with a
positive operator monotone function $f$ on $(0,\infty)$ with
$f(1)=1$ in such a way that
$$
A\sigma B=A^{1/2}f(A^{-1/2}BA^{-1/2})A^{1/2},\qquad A,B\in\bP,
$$
which extends to general $A,B\in B(\cH)^+$ as $A\sigma
B=\lim_{\eps\searrow0}(A+\eps I)\sigma(B+\eps I)$ in SOT. Here, a
function $f$ on $(0,\infty)$ is \emph{operator monotone} if $A\le B$
$\implies$ $f(A)\le f(B)$ for $A,B\in\bP$. The above operator
monotone function $f$ on $(0,\infty)$ corresponding to $\sigma$ is
denoted by $f_\sigma$ and called the \emph{representing function} of
$\sigma$. Note that $f_\sigma$ is analytic on $(0,\infty)$ with
$f_\sigma'(1)\in[0,1]$ and $f_\sigma'(1)=0$ only when
$f_\sigma\equiv1$. A concise exposition on two-variable operator
means is found in \cite[Chapter 3]{Hi}.

In this paper we shall consider a certain extension of operator means of two variables to
those of probability measures in $\cPP$. The following is the definition of such operator means
of probability measures with minimally required properties, while we shall add further
properties accordingly when needed.

\begin{definition}\label{D-2.5}\rm
We say that a map
$$
M:\cPP\ \longrightarrow\ \bP
$$
is an \emph{operator mean on $\cPP$} if it satisfies the following properties:
\begin{itemize}
\item[(i)] \emph{Monotonicity}: If $\mu,\nu\in\cPP$ and $\mu\le\nu$ in the sense of
Definition \ref{D-2.1},
then $M(\mu)\le M(\nu)$.

\item[(ii)] \emph{Positive homogeneity}: $M(\alpha.\mu)=\alpha M(\mu)$ for every
$\mu\in\cPP$ and $\alpha>0$, where $\alpha.\mu$ is the push-forward
of $\mu$ by the map $A\in\bP\mapsto\alpha A$.

\item[(iii)] \emph{Monotone continuity}: For $\mu,\mu_k\in\cPP$ ($k\in\bN$), if either
$\mu_k\searrow\mu$ or $\mu_k\nearrow\mu$ (in the sense of Definition \ref{D-2.3}), then
$M(\mu_k)\to M(\mu)$ in SOT.

\item[(iv)] \emph{Normalized condition}: $M(\delta_I)=I$.
\end{itemize}
\end{definition}

Resemblances of properties (i)--(iv) to (I)--(IV) for Kubo-Ando's
two-variable operator means are apparent, but there are also slight
differences between those. For one thing, operator means in
Definition \ref{D-2.5} are maps on $\cPP$ while two-variable
operator means are on $B(\cH)^+\times B(\cH)^+$ permitting
non-invertible operators. For another, (ii) is weaker than (II).
Here we note \cite{KA} that congruence invariance $S^*(A\sigma
B)S=(S^*AS)\sigma(S^*BS)$ for invertible $S\in B(\cH)$ is automatic
when $\sigma$ is a two-variable operator mean. Moreover, we assume
continuity both downward and upward in (iii) while only downward is
assumed in (III). Continuity from both directions seems natural when
we take care of transformation under $A\in\bP\mapsto A^{-1}\in\bP$
for operator means on $\cPP$. Note also that $\sigma$ is upward
continuous when restricted to $\bP\times\bP$, while it is not
necessarily SOT-continuous on $\bP\times\bP$. Since these facts do
not seem widely known, we supply the details in Appendix A for the
convenience of the reader.

In the rest of the section, we will show that every operator mean on $\cPP$ is
contractive for the $\infty$-Wasserstein distance on $\cPP$. To do so, we first recall
some relevant notions in the setting of a general complete metric space $(X,d)$. Let
$\cP(X)$ be the set of Borel probability measures $\mu$ on $X$ with full support. For
$1\le p<\infty$ let $\cP^p(X)$ be the set of $\mu\in\cP(X)$ with finite $p$th moment, i.e.,
$\int_Xd^p(x,y)\,d\mu(x)<\infty$ for some (hence for all) $y\in X$. Moreover, let
$\cP^\infty(X)$ be the set of $\mu\in\cP(X)$ with bounded support, i.e.,
$\supp(\mu)\subset\{x\in X:d(x,y)\le R\}$ for some $y\in X$ and some $R>0$. Obviously,
$$
\cP^\infty(X)\subset\cP^q(X)\subset\cP^p(X)\subset\cP^1(X)\quad(1<p<q<\infty).
$$

For $1\le p<\infty$, the \emph{$p$-Wasserstein distance} $d_p^W$ on
the set $\cP^p(X)$ is defined as
\begin{align}\label{F-2.2}
d_p^W(\mu_1,\mu_2):=\biggl[\inf_{\pi\in\Pi(\mu_1,\mu_2)}
\int_{X\times X}d^p(x,y)\,d\pi(x,y)\biggr]^{1/p},\quad\mu_1,\mu_2\in\cP^p(\bP),
\end{align}
where $\Pi(\mu_1,\mu_2)$ is the set of all couplings for $\mu_1,\mu_2$ (i.e.,
Borel probability measures on $\bP\times\bP$ whose marginals are $\mu_1,\mu_2$).
It is well-known that $d_p^W$ is a complete metric on $\cP^p(X)$. See, e.g., \cite{St,Vi}
for more details on $d_p^W$. The $\infty$-version of $d_p^W$ is the
\emph{$\infty$-Wasserstein distance} $d_\infty^W$ on $\cP^\infty(X)$ defined as
$$
d_\infty^W(\mu_1,\mu_2):=\inf_{\pi\in\Pi(\mu_1,\mu_2)}
\sup\{d(x,y):(x,y)\in\supp(\pi)\},\quad\mu_1,\mu_2\in\cP^\infty(X).
$$
It is also known that $d_\infty^W$ is a complete metric on $\cP^\infty(X)$ and for every
$\mu,\nu\in\cP^\infty(X)$,
$$
d_\infty^W(\mu,\nu)=\lim_{p\to\infty}d_p^W(\mu,\nu)\quad\mbox{increasingly}.
$$
To prove these facts on $d_\infty^W$, one can assume that $(X,d)$ is a Polish space;
then the proof is found in \cite[Theorem 2.8]{BL}.

\begin{definition}\label{D-2.6}\rm
Let $1\le p\le\infty$. A map $\beta:\cP^p(X)\to X$  is called a \emph{barycentric map}
or a \emph{barycenter} on $\cP^p(X)$ if $\beta(\delta_x)=x$ for all $x\in X$. We say
that the map $\beta$ is \emph{$d_p^W$-contractive} if
$$
d(\beta(\mu),\beta(\nu))\leq d_p^W(\mu,\nu)
$$
for all $\mu,\nu\in\cP^p(X)$.
\end{definition}

Next, we consider a more specialized situation of an ordered metric
space with the Thompson metric. Let $E$ be a Banach space including
an open convex cone $C$ such that its closure $\overline C$ is a
proper cone, i.e., $\overline C\cap(-\overline C)=\{0\}$. The cone
$\overline C$ defines a closed partial order on $E$ by $x\leq y$ if
$y-x\in\overline C$.  The cone $\overline C$ is said to be
\emph{normal} if there is a constant $K$ such that $0\leq x\leq y$
implies $\Vert x\Vert\leq K\Vert y\Vert$. A typical case of $(E,C)$
is $(B(\cH),\bP(\cH))$, which is our setting of this paper.
The \emph{Thompson metric} $\dT$ on
$C$ is defined by
$$
\dT(x,y):=\log\max\{M(x/y),M(y/x)\}=\min\{r\ge0:e^{-r}y\le x\le
e^ry\},
$$
where $M(x/y):=\inf\{\alpha>0:x\le\alpha y\}$. As is well-known \cite{Th},
$\dT$ is a complete metric on $C$ and the $\dT$-topology agrees with
the relative norm topology on $C$. For $\mu,\nu\in\cP(C)$, the
\emph{stochastic order} $\mu\le\nu$ is defined as in Definition \ref{D-2.1}.
Note \cite[Theorem 4.3]{HLL} that $\mu\le\nu$ is a partial order on $\cP(C)$.
For every $\mu,\nu\in\cP^\infty(C)$, there is an $\alpha\in[1,\infty)$ such that
$\alpha^{-1}.\nu\le\mu\le\alpha.\mu$, where $\alpha.\nu$ is the push-forward of
$\nu$ by the map $x\in\ C\mapsto\alpha x\in C$. Hence one can define the
Thompson metric-like function as
$$
\delta_{\mathrm{T}}(\mu,\nu):=\inf\{r\ge0:e^{-r}.\nu\le\mu\le
e^r.\nu\}, \qquad\mu,\nu\in\cP^\infty(C).
$$

\begin{prop}\label{P-2.7}
$\delta_{\mathrm{T}}(\mu,\nu)$ is a metric on $\cP^\infty(C)$ and
for every $\mu,\nu\in\cP^\infty(C)$,
\begin{align}\label{F-2.3}
\delta_{\mathrm{T}}(\mu,\nu)\le d_\infty^W(\mu,\nu).
\end{align}
\end{prop}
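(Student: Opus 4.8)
The plan is to establish two facts: that $\delta_{\mathrm T}$ is a genuine metric on $\cP^\infty(C)$, and that it is dominated by $d_\infty^W$. I expect the second (the inequality \eqref{F-2.3}) to be the heart of the matter, while the metric axioms should follow from the cone structure with only the triangle inequality requiring care.

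For the metric axioms, symmetry is immediate from the defining formula since $e^{-r}.\nu\le\mu\le e^r.\nu$ is equivalent to $e^{-r}.\mu\le\nu\le e^r.\mu$ after relabeling. For $\delta_{\mathrm T}(\mu,\nu)=0$ forcing $\mu=\nu$: if $e^{-r}.\nu\le\mu\le e^r.\nu$ holds for all $r>0$, then passing to the limit $r\searrow0$ and using that the stochastic order is a closed partial order (cited as \cite[Theorem 4.3]{HLL}) gives $\nu\le\mu\le\nu$, hence $\mu=\nu$. For the triangle inequality, I would take $\mu,\nu,\lambda$ and suppose $e^{-r}.\nu\le\mu\le e^r.\nu$ and $e^{-s}.\lambda\le\nu\le e^s.\lambda$; applying the push-forward $\alpha\mapsto\alpha.(\cdot)$, which is order-preserving in the stochastic order and satisfies $\alpha.(\beta.\lambda)=(\alpha\beta).\lambda$, I would chain these to obtain $e^{-(r+s)}.\lambda\le\mu\le e^{r+s}.\lambda$, giving $\delta_{\mathrm T}(\mu,\lambda)\le r+s$ and then the desired bound by taking infima.

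For the inequality \eqref{F-2.3}, fix $\mu,\nu\in\cP^\infty(C)$ and let $\pi\in\Pi(\mu,\nu)$ be an arbitrary coupling with $R:=\sup\{\dT(x,y):(x,y)\in\supp(\pi)\}$. The key observation is that for every pair $(x,y)$ in the support, the pointwise Thompson-metric bound $\dT(x,y)\le R$ means precisely that $e^{-R}y\le x\le e^{R}y$ in the cone order. I would leverage this to compare the two marginals stochastically: for any upper closed set $\cU$, I want to show $\mu(\cU)\le(e^{R}.\nu)(\cU)$ and the corresponding lower bound, so that $e^{-R}.\nu\le\mu\le e^{R}.\nu$ and hence $\delta_{\mathrm T}(\mu,\nu)\le R$. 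The mechanism is that the map $(x,y)\mapsto(x,e^{R}y)$ pushes $\pi$ to a coupling under which $x\le e^{R}y$ holds $\pi$-almost surely, and a coupling in which the first coordinate dominates the second almost surely witnesses the stochastic order between the marginals --- this is the standard Strassen-type criterion, and the cited characterization of stochastic order in \cite{HLL} via monotone functions is exactly what converts the almost-sure pointwise inequality into $\mu\le e^{R}.\nu$. Taking the infimum over all couplings $\pi$ then yields $\delta_{\mathrm T}(\mu,\nu)\le d_\infty^W(\mu,\nu)$.

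The main obstacle I anticipate is the almost-sure-domination step: one must check that $e^{-R}y\le x\le e^{R}y$ holding on $\supp(\pi)$ translates cleanly into the two stochastic-order inequalities for the marginals. This requires care because the support condition is a closed condition on $\pi$, and I would use the monotone-function characterization of $\le$ from Definition \ref{D-2.1}: for a bounded monotone continuous $f:\bP\to\bR^+$, integrating $f(x)\le f(e^{R}y)$ over $\pi$ gives $\int f\,d\mu\le\int f(e^{R}y)\,d\nu=\int f\,d(e^{R}.\nu)$, which is exactly $\mu\le e^{R}.\nu$; the symmetric computation with the lower bound handles the other side. Once this translation is in hand, everything else is bookkeeping with the push-forward operation.
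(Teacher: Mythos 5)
Your proposal is correct and follows essentially the same route as the paper: the metric axioms are verified by chaining the scaling relations $e^{-r}.\nu\le\mu\le e^{r}.\nu$ and $e^{-s}.\lambda\le\nu\le e^{s}.\lambda$ exactly as in the paper's proof, and the inequality \eqref{F-2.3} is obtained by taking a coupling $\pi$, using the pointwise Thompson bound $e^{-R}y\le x\le e^{R}y$ on $\supp(\pi)$, and transferring it to the stochastic order of the marginals before taking the infimum over couplings. The only cosmetic difference is that you pass from the support condition to $e^{-R}.\nu\le\mu\le e^{R}.\nu$ by integrating monotone bounded continuous functions against $\pi$, whereas the paper argues directly with upper closed sets via the inclusion $(\cU\times C)\cap\supp(\pi)\subset C\times(e^{-R}\cU)$; these two mechanisms are equivalent by the characterization of the stochastic order recalled in Definition \ref{D-2.1}.
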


\begin{proof}
Let $\mu,\nu,\lambda\in\cP^\infty(C)$. It is obvious that
$\delta_{\mathrm{T}}\ge0$ and
$\delta_{\mathrm{T}}(\mu,\nu)=\delta_{\mathrm{T}}(\nu,\mu)$. If
$\delta_{\mathrm{T}}(\mu,\nu)=0$, then $\nu\le\mu\le\nu$, which implies
$\mu=\nu$ by \cite[Theorem 4.3]{HLL}. To prove the triangle
inequality, let $r:=\delta_{\mathrm{T}}(\mu,\nu)$ and
$t:=\delta_{\mathrm{T}}(\nu,\lambda)$. Since $e^{-r}.\nu\le\mu\le
e^r.\nu$ and $e^{-t}.\lambda\le\nu\le e^t.\lambda$, we have $\mu\le
e^r.(e^t.\lambda)=e^{r+t}.\lambda$ and $\mu\ge
e^{-r}.(e^{-t}.\lambda)=e^{-(r+t)}.\lambda$, so
$\delta_{\mathrm{T}}(\mu,\lambda)\le r+t$.

Next, we prove inequality \eqref{F-2.3}. For any $\rho>d_\infty^W(\mu,\nu)$, one can
choose a $\pi\in\Pi(\mu,\nu)$ such that $\sup\{\dT(x,y):(x,y)\in\supp(\pi)\}<\rho$.
We prove that $e^{-\rho}.\nu\le\mu\le e^\rho.\nu$. For every upper closed set $\cU$
in $C$, note that
\begin{align*}
\mu(\cU)&=\pi((\cU\times C)\cap\supp(\pi)), \\
(e^\rho.\nu)(\cU)&=\nu(e^{-\rho}\cU)=\pi(C\times(e^{-\rho}\cU)).
\end{align*}
Assume that $(x,y)\in(\cU\times C)\cap\supp(\pi)$. Since $x\in\cU$
and $\dT(x,y)<\rho$ so that $x\le e^\rho y$, we have $e^\rho
y\in\cU$, implying $(x,y)\in C\times(e^{-\rho}\cU)$. Hence
$(\cU\times C)\cap\supp(\pi)\subset C\times(e^{-\rho}\cU)$, which
implies that $\mu(\cU)\le(e^\rho.\nu)(\cU)$. This means that $\mu\le
e^\rho.\nu$, and similarly $\nu\le e^\rho.\mu$ or
$e^{-\rho}.\nu\le\mu$. Therefore, we have
$\delta_{\mathrm{T}}(\mu,\nu)\le\rho$, giving (2.3).
\end{proof}

Assume that a map $\beta:\cP^\infty(C)\to C$ satisfies monotonicity
and positive homogeneity as in (i) and (ii) of Definition
\ref{D-2.5}. If $\mu,\nu\in\cP^\infty(C)$ and $e^{-r}.\nu\le\mu\le
e^r.\nu$ with $r\ge0$, then $e^{-r}\beta(\nu)\le\beta(\mu)\le
e^r\beta(\nu)$ so that $\dT(\beta(\mu),\beta(\nu))\le r$. Hence
$\dT(\beta(\mu),\beta(\nu))\le\delta_{\mathrm{T}}(\mu,\nu)$. From
this and Proposition \ref{P-2.7} we have

\begin{thm}\label{T-2.8}
An operator mean $M$ on $\cPP$ is
$(\dT)_\infty^W$-contractive$;$ in fact, for every
$\mu,\nu\in\cPP$,
$$
\dT(M(\mu),M(\nu))\le\delta_{\mathrm{T}}(\mu,\nu)\le
(\dT)_\infty^W(\mu,\nu).
$$
\end{thm}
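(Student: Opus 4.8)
The plan is to obtain the theorem as an immediate consequence of just two of the defining properties of $M$, namely monotonicity and positive homogeneity, combined with Proposition \ref{P-2.7}. First I would observe that $(B(\cH),\bP)$ is an instance of the ordered Banach space $(E,C)$ with proper normal cone discussed just above, and that $\cPP=\cP^\infty(\bP)$ coincides with $\cP^\infty(C)$ in that setting. Since an operator mean $M$ on $\cPP$ satisfies (i) and (ii) of Definition \ref{D-2.5}, it is a map $\cP^\infty(C)\to C$ enjoying exactly the monotonicity and positive homogeneity hypotheses needed.

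The heart of the argument is the first inequality. Fix $\mu,\nu\in\cPP$ and set $r:=\delta_{\mathrm T}(\mu,\nu)$. For any $s>r$, the definition of $\delta_{\mathrm T}$ furnishes the stochastic order bounds $e^{-s}.\nu\le\mu\le e^s.\nu$. Applying monotonicity (i) to each of these two inequalities and then positive homogeneity (ii) to rewrite $M(e^{\pm s}.\nu)=e^{\pm s}M(\nu)$, I obtain $e^{-s}M(\nu)\le M(\mu)\le e^sM(\nu)$. By the characterization $\dT(x,y)=\min\{t\ge0:e^{-t}y\le x\le e^ty\}$ of the Thompson metric recalled above, this gives $\dT(M(\mu),M(\nu))\le s$, and letting $s\searrow r$ yields $\dT(M(\mu),M(\nu))\le\delta_{\mathrm T}(\mu,\nu)$. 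The second inequality is precisely \eqref{F-2.3} of Proposition \ref{P-2.7}, so chaining the two estimates completes the proof.

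I do not expect a genuine obstacle here: the monotone continuity axiom (iii) and the normalization (iv) play no part, and only the order and scaling structure of $M$ is invoked. The single point requiring a word of care is the passage from the stochastic order inequalities $e^{-s}.\nu\le\mu\le e^s.\nu$ to the operator inequalities $e^{-s}M(\nu)\le M(\mu)\le e^sM(\nu)$; this uses that $e^{\pm s}.\nu$ again lies in $\cPP$, which is clear since scaling preserves bounded support, so that monotonicity of $M$ may legitimately be applied to the two-sided bound. Once this routine check is in hand, the contractivity is immediate.
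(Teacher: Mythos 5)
Your proposal is correct and follows essentially the same route as the paper: the paper also deduces $\dT(M(\mu),M(\nu))\le\delta_{\mathrm{T}}(\mu,\nu)$ directly from monotonicity (i) and positive homogeneity (ii) applied to the two-sided stochastic order bounds $e^{-s}.\nu\le\mu\le e^s.\nu$, and then chains this with inequality \eqref{F-2.3} of Proposition \ref{P-2.7}. Your explicit handling of the infimum by taking $s>r$ and letting $s\searrow r$ is a careful rendering of the step the paper leaves implicit.
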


\section{Deformed operator means}

Throughout the section, let $M$ be an operator mean on $\cPP$ as introduced in Definition
\ref{D-2.5}. For any two-variable operator mean $\sigma$ (in the Kubo-Ando sense) and any
$\mu\in\cPP$, we consider the fixed point type equation
\begin{align}\label{F-3.1}
X=M(X\sigma\mu),\qquad X\in\bP,
\end{align}
where $X\sigma\mu$ is the push-forward of $\mu$ by the map $A\in\bP\mapsto X\sigma A\in\bP$.
Note that it is easily seen that if $\mu\in\cPP$ and $X\in\bP$, then $X\sigma\mu\in\cPP$, so
the above equation makes sense. In the rest of this section we shall prove the following:

\begin{thm}\label{T-3.1}
Assume that $\sigma\ne\frak{l}$, where $\frak{l}$ is the left trivial two-variable operator
mean, i.e., $X\frak{l}Y=X$ for all $X,Y\in\bP$.
\begin{itemize}
\item[(1)] For every $\mu\in\cPP$ there exists a unique $X_0\in\bP$ satisfying \eqref{F-3.1}.
\item[(2)] If $Y\in\bP$ satisfies $Y\ge M(Y\sigma\mu)$, then $Y\ge X_0$, and if $Y'\in\bP$
satisfies $Y'\le M(Y'\sigma\mu)$, then $Y'\le X_0$.
\item[(3)] Write $M_\sigma(\mu)$ for the solution $X_0$. Then the map $M_\sigma:\cPP\to\bP$
satisfies {\rm(i)--(iv)}, that is, $M_\sigma$ is an operator mean on $\cPP$ again.
\end{itemize}
\end{thm}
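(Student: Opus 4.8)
The plan is to rewrite \eqref{F-3.1} as a fixed point problem for the self-map $F=F_\mu$ of $\bP$ given by $F(X):=M(X\sigma\mu)$, and to solve it by monotone iteration rather than by a contraction principle. Fix $\eps\in(0,1)$ with $\supp(\mu)\subset\Sigma_\eps$. First I would record two structural facts. The map $F$ is monotone: if $X\le X'$ then $X\sigma A\le X'\sigma A$ pointwise, whence $X\sigma\mu\le X'\sigma\mu$ in stochastic order and $F(X)\le F(X')$ by (i). Moreover $F$ maps the order interval $[\eps I,\eps^{-1}I]$ into itself, since $\eps I\le A\le\eps^{-1}I$ on $\supp(\mu)$ forces $\eps I\le X\sigma A\le\eps^{-1}I$ for $X$ in the interval, and then $\eps I=M(\delta_{\eps I})\le F(X)\le M(\delta_{\eps^{-1}I})=\eps^{-1}I$ by (i),(ii),(iv). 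Starting at the top with $X^{(0)}:=\eps^{-1}I$ and $X^{(k+1)}:=F(X^{(k)})$, monotonicity gives a decreasing sequence bounded below by $\eps I$, hence an SOT-limit $X_+$. The key point is that $X^{(k)}\sigma\mu\searrow X_+\sigma\mu$ in the sense of Definition \ref{D-2.3}: the stochastic order is clear, and for condition (b) one uses downward SOT-continuity (III) of $\sigma$ to get $X^{(k)}\sigma A\searrow X_+\sigma A$ for each $A$, followed by bounded convergence. Then (iii) for $M$ yields $X^{(k+1)}=M(X^{(k)}\sigma\mu)\to M(X_+\sigma\mu)$, so $X_+=F(X_+)$ is a fixed point lying in $[\eps I,\eps^{-1}I]$.

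Next I would establish the order property (2), which simultaneously forces uniqueness; this is where $\sigma\ne\mathfrak{l}$ enters. The mechanism is the strict superhomogeneity of $\sigma$ in its first slot: for $\lambda\in(0,1]$ one has $(\lambda X)\sigma A=\lambda X^{1/2}f_\sigma(\lambda^{-1}T)X^{1/2}$ with $T=X^{-1/2}AX^{-1/2}$, and since $\sigma\ne\mathfrak{l}$ the representing function $f_\sigma$ is nonconstant, hence strictly increasing and analytic. Thus on the compact spectral window $[\eps^2,\eps^{-2}]$ one gets $f_\sigma(\lambda^{-1}t)\ge c(\lambda)f_\sigma(t)$ with $c(\lambda)>1$ for $\lambda<1$, and therefore $(\lambda X_+)\sigma A\ge c(\lambda)\lambda\,(X_+\sigma A)$ uniformly over $\supp(\mu)$. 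Now suppose $Y\ge M(Y\sigma\mu)$ and set $t^*:=\sup\{t>0:Y\ge tX_+\}$, which is attained and positive. If $t^*<1$, then monotonicity and homogeneity of $M$ together with the displayed estimate give $Y\ge F(Y)\ge F(t^*X_+)=M((t^*X_+)\sigma\mu)\ge c(t^*)t^*\,M(X_+\sigma\mu)=c(t^*)t^*X_+$ with $c(t^*)t^*>t^*$, contradicting the definition of $t^*$; hence $Y\ge X_+$. The reverse statement $Y'\le M(Y'\sigma\mu)\Rightarrow Y'\le X_+$ is symmetric, using $(\lambda X)\sigma A\le c'(\lambda)\lambda(X\sigma A)$ with $c'(\lambda)<1$ for $\lambda>1$. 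Applying both halves to an arbitrary fixed point $X_0=F(X_0)$ (which satisfies both $X_0\ge F(X_0)$ and $X_0\le F(X_0)$) yields $X_0=X_+$; this proves (1) and (2) with $M_\sigma(\mu):=X_+$.

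For (3) the normalization, homogeneity and monotonicity are short. Since $I\sigma I=I$, the point $X=I$ solves $X=M(X\sigma\delta_I)=M(\delta_X)$, so $M_\sigma(\delta_I)=I$ by uniqueness, giving (iv). For (ii) one checks $(\alpha X_0)\sigma(\alpha.\mu)=\alpha.(X_0\sigma\mu)$, so $\alpha X_0$ solves the equation for $\alpha.\mu$ and $M_\sigma(\alpha.\mu)=\alpha M_\sigma(\mu)$. For (i), if $\mu\le\nu$ with $X=M_\sigma(\mu)$ and $Y=M_\sigma(\nu)$, then $Y\sigma\mu\le Y\sigma\nu$ gives $M(Y\sigma\mu)\le M(Y\sigma\nu)=Y$, i.e. $Y\ge F_\mu(Y)$, so (2) yields $Y\ge X$.

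The genuinely delicate property is the monotone continuity (iii), which I expect to be the main obstacle. Consider $\mu_k\searrow\mu$ and $X_k:=M_\sigma(\mu_k)$; by (i) these decrease and are bounded below, so $X_k\searrow X_\infty\ge M_\sigma(\mu)$ in SOT, and it remains to see $X_\infty=M_\sigma(\mu)$. By uniqueness it suffices to prove $X_\infty=M(X_\infty\sigma\mu)$, and for this I would show $X_k\sigma\mu_k\searrow X_\infty\sigma\mu$ in the sense of Definition \ref{D-2.3} and invoke (iii) for $M$. The difficulty is that the operator $X_k$ and the measure $\mu_k$ vary at once, so condition (b) cannot be reduced to bounded convergence against a fixed measure. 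My plan is to split $\int f(X_k\sigma A)\,d\mu_k-\int f(X_\infty\sigma A)\,d\mu$ into a term controlled by the weak convergence $\mu_k\to\mu$ against the fixed bounded SOT-continuous test function $A\mapsto f(X_\infty\sigma A)$, and a remainder $\int[f(X_k\sigma A)-f(X_\infty\sigma A)]\,d\mu_k$. For the remainder I would use Prokhorov's theorem (the weakly convergent sequence $\{\mu_k\}$ is tight on the Polish space $(\Sigma_\eps,d_\eps)$) to reduce to a compact set $K$, and then a Dini-type argument: for each $\xi$ the quadratic forms $A\mapsto\langle(X_k\sigma A)\xi,\xi\rangle$ are continuous on $K$ and decrease pointwise to the continuous limit, so the convergence is uniform on $K$; promoting this through the square root and over the countably many weights defining $d_\eps$ gives $X_k\sigma A\to X_\infty\sigma A$ uniformly on $K$, whence $f(X_k\sigma A)\to f(X_\infty\sigma A)$ uniformly on $K$ by uniform continuity of $f$ on the compact image. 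The case $\mu_k\nearrow\mu$ is identical once one invokes the upward SOT-continuity of $\sigma$ on $\bP\times\bP$ recorded in Appendix A. I expect the bookkeeping in this Dini/tightness step — in particular passing from uniform convergence of the quadratic forms to uniform convergence in $d_\eps$ — to be the only place demanding real care.
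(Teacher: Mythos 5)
Your proof is correct, and parts of it---the monotone iteration from $\eps^{-1}I$ for existence, and the verifications of (i), (ii), (iv)---coincide with the paper's; but in two places you follow a genuinely different route. For (2) and uniqueness the paper goes the other way around: it first proves a strict-contraction lemma (Lemma \ref{L-3.6}), namely $\dT(M(X\sigma\mu),M(Y\sigma\mu))<\dT(X,Y)$ for $X\ne Y$, which gives uniqueness at once, and then derives (2) by iterating $F$ from $Y$ and identifying the decreasing limit with $X_0$ via uniqueness; you prove (2) first, by a maximal-scalar (Perron--Frobenius style) argument resting on the superhomogeneity bound $(\lambda X_+)\sigma A\ge c(\lambda)\lambda(X_+\sigma A)$ with $c(\lambda)>1$ for $\lambda<1$, and harvest uniqueness as a corollary. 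Since both devices come from the same analytic fact---strict increase of $f_\sigma$ on the compact spectral window $[\eps^2,\eps^{-2}]$---this is largely a repackaging: yours is more economical (no separate lemma), while the paper's Lemma \ref{L-3.6} is a stronger standalone statement (strict $\dT$-nonexpansivity of $X\mapsto M(X\sigma\mu)$). The substantive divergence is in (iii). The paper never touches the joint limit in which the operator and the measure vary simultaneously: it gets $X_0\ge M(X_0\sigma\mu)$ from monotonicity alone, and $X_0\le M(X_0\sigma\mu)$ by a decoupled two-step limit---starting from $X_l=M(X_l\sigma\mu_l)\le M(X_l\sigma\mu_k)$ for $k<l$, first let $l\to\infty$ with $\mu_k$ fixed (Lemma \ref{L-3.5}\,(1)), then let $k\to\infty$ with $X_0$ fixed (Lemma \ref{L-3.5}\,(2))---so only monotone convergence against a fixed measure or a fixed operator is ever needed. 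Your head-on proof that $X_k\sigma\mu_k\searrow X_\infty\sigma\mu$ in the sense of Definition \ref{D-2.3} is also valid: the Prokhorov-tightness and Dini steps you flag do go through (Dini for the quadratic forms $A\mapsto\<\xi,(X_k\sigma A)\xi\>$ on a compact $K$, the inequality $\|D\xi\|^2\le\|D\|\,\<\xi,D\xi\>$ for $D\ge0$ to upgrade to $d_\eps$-uniformity, and a sequential-compactness argument to justify the uniform-continuity-of-$f$ step), and you even obtain a stronger intermediate statement of possible independent interest; but it costs noticeably heavier machinery, all of which the paper's order-theoretic decoupling trick shows to be unnecessary.
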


We call $M_\sigma:\cPP\to\bP$ given in the theorem the \emph{deformed operator mean} from $M$
by $\sigma$.

\begin{remark}\label{R-3.2}\rm
It is easy to verify that the arithmetic mean $\cA(\mu):=\int_\bP A\,d\mu(A)$ on
$\cPP$ satisfies (i)--(iv). When $M=\cA$, equation \eqref{F-3.1} is equivalently written as
\begin{align}\label{F-3.2}
X=\int_\bP X\sigma A\,d\mu(A),\quad\mbox{i.e.},\quad I=\int_\bP
f_\sigma(X^{-1/2}AX^{-1/2})\,d\mu(A),
\end{align}
where $f_\sigma$ is the representing function of $\sigma$ so that
$X\sigma A=X^{1/2}f_\sigma(X^{-1/2}AX^{-1/2})X^{1/2}$ for $X,A\in\bP$. Here we remark that
the functions $A\in\bP\mapsto X\sigma A,f_\sigma(X^{-1/2}AX^{-1/2})\in\bP$ are continuous
in the operator norm and are operator norm bounded on the support of $\mu$. Hence the above
integrals $\int_\bP A\,d\mu(A)$, $\int_\bP X\sigma A\,d\mu(A)$ and
$\int_\bP f_\sigma(X^{-1/2}AX^{-1/2})\,d\mu(A)$ are well defined as Bochner integrals.

Assume that $\sigma\ne\frak{l}$ so that $f_\sigma'(1)>0$, and set
$g_\sigma(x):=(f_\sigma(x)-1)/f_\sigma'(1)$. Then $g_\sigma$ is an
operator monotone function with $g_\sigma(1)=0$ and
$g_\sigma'(1)=1$, and \eqref{F-3.1} or \eqref{F-3.2} is equivalent
to
$$
\int_\bP g_\sigma(X^{-1/2}AX^{-1/2})\,d\mu(A)=0.
$$
Hence, equation \eqref{F-3.1} in this case reduces to the \emph{generalized Karcher equation}
in \cite[Definition 2.2]{Pa}. The method in \cite{Pa} is relied on the Banach contraction
principle, while our proof of the theorem will be done by a simple argument of monotone
convergence. Hence our proof is applicable to any operator mean $M$ satisfying (i)--(iv).
\end{remark}

\begin{remark}\label{R-3.3}\rm
As discussed in \cite{Hi2,HSW} the deformed operator means can be also considered in the
restricted setting of two-variable operator means (in the Kubo-Ando sense) and in that of
$n$-variable operator means. As for the two-variable case, when $M=\tau$ is a two-variable
operator mean, the reduced equation of \eqref{F-3.1} is, for $\sigma\ne\frak{l}$ and
$A,B\in\bP$,
$$
X=(X\sigma A)\tau(X\sigma B),\qquad X\in\bP.
$$
which has a unique solution $X_0\in\bP$ as in Theorem \ref{T-3.1}. If we write
$A\tau_\sigma B$ for the solution $X_0$, then $\tau_\sigma$ becomes a two-variable operator
mean again and the representing function of $\tau_\sigma$ is exactly determined by those of
$\tau$ and $\sigma$, see \cite{Hi2}. The restriction to two-variable operator means was
also discussed in \cite{Pa,Ya2} for the generalized Karcher equation mentioned in Remark
\ref{R-3.2}.
\end{remark}

The following proof of the theorem is essentially the same as that
of \cite[Theorem 2.1]{HSW} (also \cite{Hi2}), where a similar
theorem was shown for multivariate operator means $M:\bP^n\to\bP$.
Now, let $M$ and $\sigma$ be as in the theorem. We first give some
lemmas.

\begin{lemma}\label{L-3.4}
Let $\ffi,\psi:\bP\to\bP$ be monotone and Borel measurable. Assume that $\ffi(A)\le\psi(A)$
for all $A\in\bP$. If $\mu,\nu\in\cPP$ and $\mu\le\nu$, then $\ffi_*\mu,\psi_*\nu\in\cPP$
and $\ffi_*\mu\le\psi_*\nu$.
\end{lemma}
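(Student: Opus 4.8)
The plan is to treat the two assertions separately: first that the push-forwards lie in $\cPP$, then that they are in stochastic order, the latter via the integral criterion recorded in Definition \ref{D-2.1}. By Lemma \ref{L-2.2}, since $\mu\le\nu$, I may fix a single $\eps\in(0,1)$ with $\supp(\mu),\supp(\nu)\subset\Sigma_\eps$. For the support claim I would exploit monotonicity of $\ffi$: on $\Sigma_\eps$ one has $\eps I\le A\le\eps^{-1}I$, hence $\ffi(\eps I)\le\ffi(A)\le\ffi(\eps^{-1}I)$, and since $\ffi(\eps I),\ffi(\eps^{-1}I)\in\bP$ are fixed operators there is an $\eps'\in(0,1)$ with $\eps'I\le\ffi(\eps I)$ and $\ffi(\eps^{-1}I)\le(\eps')^{-1}I$, so that $\ffi(\Sigma_\eps)\subset\Sigma_{\eps'}$. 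As $\mu(\Sigma_\eps)=1$ and $\ffi$ is Borel measurable, $(\ffi_*\mu)(\Sigma_{\eps'})=\mu(\ffi^{-1}(\Sigma_{\eps'}))\ge\mu(\Sigma_\eps)=1$; since $\Sigma_{\eps'}$ is norm-closed in $\bP$ this forces $\supp(\ffi_*\mu)\subset\Sigma_{\eps'}$, whence $\ffi_*\mu\in\cPP$. The identical argument applied to $\psi$ and $\nu$ gives $\psi_*\nu\in\cPP$.

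For the stochastic order I would invoke the criterion from Definition \ref{D-2.1}: it suffices to show $\int_\bP g\,d(\ffi_*\mu)\le\int_\bP g\,d(\psi_*\nu)$ for every monotone bounded Borel function $g:\bP\to\bR^+$. By the change-of-variables formula for push-forwards this reads $\int_\bP g(\ffi(A))\,d\mu(A)\le\int_\bP g(\psi(A))\,d\nu(A)$, which I would break into two steps. First, since $\ffi(A)\le\psi(A)$ for every $A$ and $g$ is monotone, $g(\ffi(A))\le g(\psi(A))$ pointwise, so $\int_\bP g(\ffi(A))\,d\mu(A)\le\int_\bP g(\psi(A))\,d\mu(A)$. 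Second, the composite $g\circ\psi$ is monotone (composition of monotone maps), bounded (as $g$ is bounded), and Borel (as $\psi$ and $g$ are Borel), hence is itself an admissible test function $\bP\to\bR^+$; therefore $\mu\le\nu$ yields $\int_\bP g(\psi(A))\,d\mu(A)\le\int_\bP g(\psi(A))\,d\nu(A)$. Chaining the two inequalities gives the desired estimate, so $\ffi_*\mu\le\psi_*\nu$.

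The individual steps are genuinely routine, so the only point that deserves care is the observation that $g\circ\ffi$ and $g\circ\psi$ remain admissible test functions for the criterion of Definition \ref{D-2.1}, i.e.\ that composing a monotone bounded Borel function with a monotone Borel map preserves all three properties: monotonicity and boundedness are immediate, and Borel measurability of the composite holds because the $\ffi$-preimage of a Borel set is Borel. I expect this verification, together with checking that $\Sigma_{\eps'}$ is norm-closed so that the support computation in the first paragraph is legitimate, to be the main (albeit modest) obstacle; everything else is bookkeeping with the characterizations already furnished by Definition \ref{D-2.1} and Lemma \ref{L-2.2}. Note in particular that monotonicity of $\psi$ is what the order step uses, while monotonicity of both $\ffi$ and $\psi$ is what the $\cPP$-membership step uses.
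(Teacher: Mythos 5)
Your proof is correct and follows essentially the same route as the paper: the stochastic order is verified through the integral criterion of Definition \ref{D-2.1} via the identical two-step chain $\int g(\ffi(A))\,d\mu\le\int g(\psi(A))\,d\mu\le\int g(\psi(A))\,d\nu$, using pointwise domination plus monotonicity of $g$ for the first inequality and admissibility of $g\circ\psi$ for the second. The only difference is that you spell out the $\cPP$-membership of the push-forwards (which the paper dismisses as immediate from monotonicity), and your expansion of that point is fine.
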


\begin{proof}
Let $\ffi,\psi$ and $\mu,\nu$ be as stated. That
$\ffi_*\mu,\psi_*\nu\in\cPP$ follows immediately from $\ffi,\psi$
being monotone. For any monotone bounded Borel function
$f:\bP\to\bR^+$, $f(\ffi(A))\le f(\psi(A))$ for all $A\in\bP$ and
$f\circ\psi$ is monotone. Hence it follows from \cite[Proposition 3.6]{HLL} that
\begin{align*}
\int_\bP f(A)\,d(\ffi_*\mu)(A)&=\int_\bP f(\ffi(A))\,d\mu(A)
\le\int_\bP f(\psi(A))\,d\mu(A) \\
&\le\int_\bP f(\psi(A))\,d\nu(A)=\int_\bP f(A)\,d(\psi_*\nu)(A),
\end{align*}
which gives $\ffi_*\mu\le\psi_*\nu$ by \cite[Proposition 3.6]{HLL} again.
\end{proof}

\begin{lemma}\label{L-3.5}
Let $\sigma$ be arbitrary.
\begin{itemize}
\item[(1)] Let $X,X_k\in\bP$ ($k\in\bN$), and assume that $X_1\ge X_2\ge\cdots$ $($resp.,
$X_1\le X_2\le\cdots)$ and $X_k\to X$ in SOT. Then for every
$\mu\in\cPP$, $X_k\sigma\mu\searrow X\sigma\mu$ $($resp.,
$X_k\sigma\mu\nearrow X\sigma\mu)$ in the sense of Definition
$\ref{D-2.3}$.
\item[(2)] Let $\mu,\mu_k\in\cPP$ ($k\in\bN$), and assume that $\mu_k\searrow\mu$ $($resp.,
$\mu_k\nearrow\mu)$. Then for every $X\in\bP$, $X\sigma\mu_k\searrow
X\sigma\mu$ $($resp., $X\sigma\mu_k\nearrow X\sigma\mu)$.
\end{itemize}
\end{lemma}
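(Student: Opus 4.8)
**The plan is to prove Lemma \ref{L-3.5} by verifying the two conditions (a) and (b) in the definition of monotone convergence (Definition \ref{D-2.3}) directly for the relevant sequences.** I will treat parts (1) and (2) in parallel, handling the decreasing case (the increasing case being symmetric). For part (1), I start from a monotone SOT-convergent sequence $X_k$ in $\bP$ and a fixed $\mu\in\cPP$; for part (2), from $\mu_k\searrow\mu$ and a fixed $X\in\bP$. In both cases the target is to show the pushed-forward measures converge monotonically in the sense of Definition \ref{D-2.3}. The key structural observation I will exploit is that for a fixed two-variable operator mean $\sigma$, the map $A\mapsto X\sigma A$ is monotone in each argument (property (I) of Kubo-Ando means), and this monotonicity is exactly what links the order on operators/measures to the stochastic order $\le$ needed for condition (a).

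**First I would verify condition (a), the stochastic ordering.** For part (1), since $X_1\ge X_2\ge\cdots$ and each $\mu$ is fixed, monotonicity of $\sigma$ in its left argument gives $X_k\sigma A\ge X_{k+1}\sigma A$ for every $A$, and the limit $X_k\to X$ in SOT gives $X_k\sigma A\ge X\sigma A$. I would then apply Lemma \ref{L-3.4} with $\varphi(A)=X_{k+1}\sigma A$, $\psi(A)=X_k\sigma A$ (both monotone Borel maps) and $\mu=\nu$ to deduce $X_{k+1}\sigma\mu\le X_k\sigma\mu$ in the stochastic order, and similarly $X\sigma\mu\le X_k\sigma\mu$. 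This establishes the monotone chain required by (a). For part (2), the relevant maps $\psi(A)=X\sigma A$ are identical for all $k$, so Lemma \ref{L-3.4} applied to the hypothesis $\mu_{k+1}\le\mu_k$ (resp.\ $\mu\le\mu_k$) yields $X\sigma\mu_{k+1}\le X\sigma\mu_k$ directly. In both parts I also need the common support bound $\Sigma_\eps$: for part (1) I would choose $\eps$ so that $\mu$ is supported in $\Sigma_\eps$ and all $X_k, X$ lie in a Thompson-bounded set (using that a monotone SOT-convergent sequence is norm-bounded by the uniform boundedness principle, hence contained in some $\Sigma_\delta$), which forces the images $X_k\sigma A$ into a common $\Sigma_{\eps'}$; for part (2) the common $\eps$ for the $\mu_k$ is already supplied by Definition \ref{D-2.3}(a), and applying $X\sigma\cdot$ maps $\Sigma_\eps$ into a fixed $\Sigma_{\eps'}$.

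**Next I would verify condition (b), the integral convergence against bounded SOT-continuous test functions.** Given a bounded SOT-continuous real function $f$ on the relevant $\Sigma_{\eps'}$, I set $g(A):=f(X\sigma A)$ (part 2) or $g_k(A):=f(X_k\sigma A)$ (part 1). For part (2) the point is that $g$ is bounded and SOT-continuous on $\Sigma_\eps$ — this uses that $A\mapsto X\sigma A$ is SOT-continuous on order-bounded sets — whence $\int g\,d\mu_k\to\int g\,d\mu$ is precisely the hypothesis $\mu_k\searrow\mu$ applied to the test function $g$, giving $\int f\,d(X\sigma\mu_k)\to\int f\,d(X\sigma\mu)$ by change of variables. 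For part (1) the subtlety is that the integrand $g_k$ itself varies with $k$; here I would add and subtract, bounding
\begin{align*}
\Bigl|\int f\,d(X_k\sigma\mu)-\int f\,d(X\sigma\mu)\Bigr|
=\Bigl|\int_{\Sigma_\eps}\bigl(f(X_k\sigma A)-f(X\sigma A)\bigr)\,d\mu(A)\Bigr|,
\end{align*}
and control the integrand using SOT-continuity of $f$ together with $X_k\sigma A\to X\sigma A$ in SOT (which follows from $X_k\to X$ in SOT and the downward SOT-continuity property (III) of $\sigma$, applied on the order-bounded family), invoking dominated convergence since $f$ is bounded.

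**The main obstacle will be the pointwise SOT-convergence $X_k\sigma A\to X\sigma A$ uniformly enough to pass to the limit inside the integral in part (1)**, and more precisely justifying the interchange of limit and integral when the test function is only SOT-continuous rather than norm-continuous. The delicate point is that SOT-continuity of $f$ gives convergence $f(X_k\sigma A)\to f(X\sigma A)$ only after confirming $X_k\sigma A\to X\sigma A$ in SOT for $\mu$-almost every $A$; this is where Kubo-Ando property (III) (downward SOT-continuity of $\sigma$) is essential, and I must ensure the hypotheses of (III) — monotone convergence of both arguments — are met, which they are since $X_k$ decreases to $X$ while the second argument $A$ is held fixed. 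Once this pointwise SOT-convergence is secured on the support of $\mu$, boundedness of $f$ supplies the dominating function and the dominated convergence theorem closes the argument. I expect the routine verifications of Borel measurability and the support-tracking through $\Sigma_\eps\mapsto\Sigma_{\eps'}$ to be mechanical, so the real content is this limit-interchange step.
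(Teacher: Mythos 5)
Your overall architecture coincides with the paper's proof: condition (a) of Definition \ref{D-2.3} via Lemma \ref{L-3.4}, and condition (b) via pointwise SOT-convergence of the integrands together with the bounded convergence theorem. Part (2) is sound in both directions, since there the test function $A\mapsto f(X\sigma A)$ is fixed and the convergence of integrals is literally the hypothesis $\mu_k\searrow\mu$ (or $\nearrow$); the one ingredient you assert without proof, the SOT-continuity of $A\mapsto X\sigma A$ on $\Sigma_\eps$, is true but is not a Kubo-Ando axiom --- it is exactly Proposition \ref{P-A.1} of the paper, proved via the representing function.

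The genuine gap is in part (1), in the increasing case, which you dismiss as ``symmetric.'' It is not. Your justification of the pointwise convergence $X_k\sigma A\to X\sigma A$ rests on Kubo-Ando axiom (III), which is \emph{downward} continuity only; there is no upward counterpart among the axioms, and upward SOT-continuity genuinely fails on $B(\cH)^+\times B(\cH)^+$: Remark \ref{R-A.2} of the paper exhibits projections $P_k\nearrow I$, $Q_k\nearrow I$ with $P_k\sigma Q_k=0\not\to I$ for the geometric or harmonic mean. What rescues the increasing case in the lemma is invertibility: there $X_k\ge X_1\ge\eps I$ for some $\eps>0$, and on $\{Y\in\bP:Y\ge\eps I\}\times\bP$ every operator mean \emph{is} jointly SOT-continuous (Proposition \ref{P-A.1}, whose proof uses SOT-continuity of $Y\mapsto Y^{\pm1/2}$ on sets bounded below, plus SOT-continuity of the functional calculus by $f_\sigma$ on norm-bounded sets). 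Alternatively, you can reduce the increasing case to axiom (III) by the adjoint trick: write $X_k\sigma A=(X_k^{-1}\sigma^*A^{-1})^{-1}$, observe that $X_k^{-1}\searrow X^{-1}$ in SOT because inversion is SOT-continuous on $\{Y\ge\eps I\}$, apply (III) to the operator mean $\sigma^*$, and invert back on a set bounded below away from $0$. Either way an explicit extra argument is required; as written, your plan leaves unproved precisely the upward half of statement (1), which is the half invoked later in the proof of Theorem \ref{T-3.1} (both in part (2) and in the monotone continuity of $M_\sigma$).
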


\begin{proof}
(1)\enspace From Lemma \ref{L-3.4} it is immediate that
$X_1\sigma\mu\ge X_2\sigma\mu\ge\cdots\ge X\sigma\mu$ (resp.,
$X_1\sigma\mu\le X_2\sigma\mu\le\cdots\le X\sigma\mu$). Choose an
$\eps\in(0,1)$ such that all of $X,X_k$ and $\supp(\mu)$ are in
$\Sigma_\eps$. Since $Y\sigma A\in\Sigma_\eps$ for all
$Y,A\in\Sigma_\eps$, note that all $X\sigma\mu$ and $X_k\sigma\mu$
are supported on $\Sigma_\eps$. Since $X_k\sigma
A\to X\sigma A$ in SOT as $k\to\infty$ for any $A\in\bP,$ for every
bounded SOT-continuous real function $f$ on $\Sigma_\eps$, it
follows from the bounded convergence theorem that
\begin{align*}
&\int_\bP f(A)\,d(X_k\sigma\mu)(A)=\int_\bP f(X_k\sigma A)\,d\mu(A) \\
&\qquad\longrightarrow\ \int_\bP f(X\sigma A)\,d\mu(A)=\int_\bP f(A)\,d(X\sigma\mu)(A).
\end{align*}
This implies the assertion.

(2)\enspace From Lemma \ref{L-3.4} again, $X\sigma\mu_1\ge
X\sigma\mu_2\ge\cdots\ge X\sigma\mu$ (resp., $X\sigma\mu_1\le
X\sigma\mu_2\le\cdots\le X\sigma\mu$). Choose an $\eps\in(0,1)$ such
that all $\mu,\mu_k$ are supported on $\Sigma_\eps$ and
$X\in\Sigma_\eps$. Then all $X\sigma\mu$ and $X\sigma\mu_k$ are
supported on $\Sigma_\eps$. For every bounded SOT-continuous real
function $f$ on $\Sigma_\eps$, since $A\in\Sigma_\eps\mapsto
f(X\sigma A)$ is SOT-continuous, we have
\begin{align*}
&\int_\bP f(A)\,d(X\sigma\mu_k)(A)=\int_\bP f(X\sigma A)\,d\mu_k(A) \\
&\qquad\longrightarrow\ \int_\bP f(X\sigma A)\,d\mu(A)=\int_\bP f(A)\,d(X\sigma\mu)(A).
\end{align*}
Hence the assertion follows.
\end{proof}

The next lemma is crucial to obtain the uniqueness of a solution to \eqref{F-3.1}.

\begin{lemma}\label{L-3.6}
Let $\sigma\ne\frak{l}$. If $X,Y\in\bP$ and $X\ne Y$, then
$$
\dT(M(X\sigma\mu),M(Y\sigma\mu))<\dT(X,Y)
$$
for every $\mu\in\cPP$.
\end{lemma}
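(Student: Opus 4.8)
The plan is to dominate the left-hand side by the measure metric $\delta_{\mathrm T}$ using the contractivity already in hand, then collapse the measure-level comparison to a pointwise one on the bounded support of $\mu$, and finally extract a \emph{quantitative} strict contraction of the single-variable map $A\mapsto X\sigma A$ that is uniform over $A$ in a fixed order interval $\Sigma_\eps$. Concretely, I would first invoke Theorem \ref{T-2.8} to get $\dT(M(X\sigma\mu),M(Y\sigma\mu))\le\delta_{\mathrm T}(X\sigma\mu,Y\sigma\mu)$. Since $X\sigma\mu$ and $e^r.(Y\sigma\mu)$ are the push-forwards of the \emph{same} $\mu$ under the monotone maps $A\mapsto X\sigma A$ and $A\mapsto e^r(Y\sigma A)$, the argument of Lemma \ref{L-3.4} (which only integrates against $\mu$, so the pointwise inequality on $\supp(\mu)$ suffices) shows that $e^{-r}.(Y\sigma\mu)\le X\sigma\mu\le e^r.(Y\sigma\mu)$ follows from the pointwise relations $e^{-r}(Y\sigma A)\le X\sigma A\le e^r(Y\sigma A)$ for $A\in\supp(\mu)$. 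Hence
$$
\delta_{\mathrm T}(X\sigma\mu,Y\sigma\mu)\le\sup_{A\in\supp(\mu)}\dT(X\sigma A,Y\sigma A),
$$
and it remains to bound this supremum strictly below $s:=\dT(X,Y)>0$.

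For the pointwise estimate I would fix $\eps\in(0,1)$ with $X,Y\in\Sigma_\eps$ and $\supp(\mu)\subset\Sigma_\eps$, and set $W:=Y^{-1/2}AY^{-1/2}$, whose spectrum lies in the fixed compact interval $[\eps^2,\eps^{-2}]$ for every $A\in\Sigma_\eps$. From $X\le e^sY$ and monotonicity of $\sigma$ in its first variable, $X\sigma A\le(e^sY)\sigma A$. The key algebraic identity $(e^sY)\sigma A=e^sY^{1/2}f_\sigma(e^{-s}W)Y^{1/2}$, together with $Y\sigma A=Y^{1/2}f_\sigma(W)Y^{1/2}$, reduces the target $X\sigma A\le e^{r}(Y\sigma A)$ to the single-operator functional-calculus inequality $e^{s-r}f_\sigma(e^{-s}W)\le f_\sigma(W)$, hence (since $W$ is one operator) to the scalar inequality $e^{s-r}f_\sigma(e^{-s}w)\le f_\sigma(w)$ for $w\in[\eps^2,\eps^{-2}]$.

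Now the hypothesis $\sigma\ne\frak{l}$ enters: $f_\sigma$ is a non-constant operator monotone function on $(0,\infty)$, hence strictly increasing, so $m:=\min_{w\in[\eps^2,\eps^{-2}]}\bigl(f_\sigma(w)-f_\sigma(e^{-s}w)\bigr)>0$. Writing $f_{\max}:=\max_{[\eps^2,\eps^{-2}]}f_\sigma$ and choosing $r<s$ close enough to $s$ that $(e^{s-r}-1)f_{\max}\le m$, one gets $f_\sigma(w)-e^{s-r}f_\sigma(e^{-s}w)\ge m-(e^{s-r}-1)f_{\max}\ge0$ uniformly in $w$. This yields an $r^*<s$, independent of $A$, with $X\sigma A\le e^{r^*}(Y\sigma A)$ for all $A\in\Sigma_\eps$; the symmetric argument, using $Y\le e^sX$, gives $r^{**}<s$ with $Y\sigma A\le e^{r^{**}}(X\sigma A)$. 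Setting $s':=\max(r^*,r^{**})<s$ then gives $\dT(X\sigma A,Y\sigma A)\le s'$ for all $A\in\Sigma_\eps$, so the displayed supremum is at most $s'<s=\dT(X,Y)$, completing the chain.

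The step I expect to be the genuine obstacle is the \emph{uniformity} in the last two paragraphs. Pointwise strictness of the contraction is immediate for any single $A$, but the slack $f_\sigma(w)-f_\sigma(e^{-s}w)$ degenerates to $0$ as $w\to0$ or $w\to\infty$ (i.e.\ as the first-variable scale of $A$ runs off to $0$ or $\infty$), so a purely pointwise bound cannot keep $\sup_A\dT(X\sigma A,Y\sigma A)$ away from $s$. What rescues the estimate is exactly that $\mu\in\cPP$ has bounded support, confining $\mathrm{spec}(W)$ to a fixed compact interval on which the strictly positive minimum $m$ exists; thus both the condition $\sigma\ne\frak{l}$ (via strict monotonicity of $f_\sigma$) and the restriction to $\cPP$ (via compactness of the spectral range) are essential, and must be woven together carefully rather than invoked separately.
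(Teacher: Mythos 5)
Your proof is correct and follows essentially the same route as the paper's: a uniform pointwise strict contraction $\dT(X\sigma A,Y\sigma A)\le s'<\dT(X,Y)$ for $A\in\Sigma_\eps$, obtained from strict monotonicity of $f_\sigma$ on the compact interval $[\eps^2,\eps^{-2}]$, then lifted to measures by the argument of Lemma \ref{L-3.4} and finished by the $\delta_{\mathrm{T}}$-contractivity coming from properties (i) and (ii) (Theorem \ref{T-2.8}). The only differences are cosmetic: you conjugate by $Y$ and turn the additive gap $m$ into a multiplicative one via the bound $f_{\max}$, whereas the paper conjugates by $X$ and uses $\rho X\ge\rho\eps^2(X\sigma A)$ for the same purpose.
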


\begin{proof}
Let $X,Y\in\bP$ be such that $X\ne Y$, and let $\alpha:=\dT(X,Y)>0$. Choose an
$\eps\in(0,1)$ such that $X$ and $\supp(\mu)$ are in $\Sigma_\eps$. For every $A\in\bP$
note that
\begin{align}
Y\sigma A&\le(e^\alpha X)\sigma A
=e^\alpha X^{1/2}f_\sigma(e^{-\alpha}X^{-1/2}AX^{-1/2})X^{1/2}, \label{F-3.3}\\
Y\sigma A&\ge(e^{-\alpha}X)\sigma A
=e^{-\alpha}X^{1/2}f_\sigma(e^\alpha X^{-1/2}AX^{-1/2})X^{1/2}. \label{F-3.4}
\end{align}
For every $A\in\Sigma_\eps$, since $\eps^2I\le X^{-1/2}AX^{-1/2}\le\eps^{-2}I$, we have
\begin{align*}
f_\sigma(X^{-1/2}AX^{-1/2})-f_\sigma(e^{-\alpha}X^{-1/2}AX^{-1/2})
&\ge\biggl(\min_{t\in[\eps^2,\eps^{-2}]}\{f_\sigma(t)-f_\sigma(e^{-\alpha}t)\}\biggr)I, \\
f_\sigma(e^\alpha X^{-1/2}AX^{-1/2})-f_\sigma(X^{-1/2}AX^{-1/2})
&\ge\biggl(\min_{t\in[\eps^2,\eps^{-2}]}\{f_\sigma(e^\alpha t)-f_\sigma(t)\}\biggr)I.
\end{align*}
Since $\sigma\ne\frak{l}$, $f_\sigma$ is strictly increasing (and analytic) on $(0,\infty)$,
the minima in the above two expressions are strictly positive. Hence there exists a
$\rho\in(0,1)$ such that, for every $A\in\Sigma_\eps$,
\begin{align}
f_\sigma(X^{-1/2}AX^{-1/2})-f_\sigma(e^{-\alpha}X^{-1/2}AX^{-1/2})&\ge\rho I, \label{F-3.5}\\
f_\sigma(e^\alpha X^{-1/2}AX^{-1/2})-f_\sigma(X^{-1/2}AX^{-1/2})&\ge\rho I. \label{F-3.6}
\end{align}
Therefore,
\begin{align*}
Y\sigma A&\le e^\alpha(X\sigma A-\rho X)\le e^\alpha(1-\rho\eps^2)(X\sigma A), \\
Y\sigma A&\ge e^{-\alpha}(X\sigma A+\rho X)\ge e^{-\alpha}(1+\rho\eps^2)(X\sigma A),
\end{align*}
since $\eps I\le X\le\eps^{-1}I$ and $\eps I\le X\sigma A\le\eps^{-1}I$ so that
$\eps^2(X\sigma A)\le X\le\eps^{-2}(X\sigma A)$. Choosing a $\beta\in(0,\alpha)$ such that
$e^{\alpha-\beta}\le1+\rho\eps^2$, we have
\begin{align}\label{F-3.7}
e^{-\beta}(X\sigma A)\le Y\sigma A\le e^\beta(X\sigma A),\qquad A\in\Sigma_\eps.
\end{align}

Now, write $\psi_X(A):=X\sigma A$ and $\psi_Y(A):=Y\sigma A$ for $A\in\bP$. The claim given
in \eqref{F-3.7} means that $e^{-\beta}\psi_X(A)\le\psi_Y(A)\le e^\beta\psi_X(A)$ for all
$A\in\Sigma_\eps$. Since $(e^{\pm\beta}\psi_X)_*\mu=e^{\pm\beta}.(X\sigma\mu)$ and
$(\psi_Y)_*\mu=Y\sigma\mu$, Lemma \ref{L-3.4} gives
$$
e^{-\beta}.(X\sigma\mu)\le Y\sigma\mu\le e^\beta.(X\sigma\mu).
$$
By (i) and (ii) this implies that
$$
e^{-\beta}M(X\sigma\mu)\le M(Y\sigma\mu)\le e^\beta M(X\sigma\mu)
$$
so that $\dT(M(X\sigma\mu),M(Y\sigma\mu))\le\beta<\dT(X,Y)$.
\end{proof}

We are now in a position to prove the theorem.

\bigskip\noindent
\emph{Proof of Theorem $\ref{T-3.1}$.}\enspace (1)\enspace For any
fixed $\mu\in\cPP$ define a map $F:\bP\to\bP$ by
$$
F(X):=M(X\sigma\mu),\qquad X\in\bP,
$$
which is monotone, i.e., $X\le Y$ implies $F(X)\le F(Y)$, by Lemma \ref{L-3.4} and (i)
of Definition \ref{D-2.5}. Choose an $\eps\in(0,1)$ such that $\mu$ is supported on
$\Sigma_\eps$, and let $Z:=\eps^{-1}I$. Since $\mu\le\eps^{-1}.\delta_I$ and so
$Z\sigma\mu\le\eps^{-1}.\delta_I$, we have $F(Z)\le\eps^{-1}I=Z$ by (i) and (iv), and
iterating this implies that $Z\ge F(Z)\ge F^2(Z)\ge\cdots$. Moreover, since
$(\eps I)\sigma\mu\ge\eps.\delta_I$, $F(Z)\ge F(\eps I)\ge\eps I$, and by iterating this
we have $F^k(Z)\ge\eps I$ for all $k$. Therefore, $F^k(Z)\searrow X_0\in\bP$ for some
$X_0\in\bP$, and hence $F^k(Z)\sigma\mu\searrow X_0\sigma\mu$ by Lemma \ref{L-3.5}\,(1).
From the monotone continuity of $M$ in (iii) it follows that
$$
F^{k+1}(Z)=M(F^k(Z)\sigma\mu)\ \searrow\ M(X_0\sigma\mu),
$$
which yields that $X_0=M(X_0\sigma\mu)$.

To prove the uniqueness of the solution, assume that $X_1\in\bP$ satisfies
$X_1=M(X_1\sigma\mu)$ and $X_1\ne X_0$. Then by Lemma \ref{L-3.6} we have
$$
\dT(X_0,X_1)=\dT(M(X_0\sigma\mu),M(X_1\sigma\mu))<\dT(X_0,X_1),
$$
a contradiction.

(2)\enspace
Let $F$ be as in the proof of (1). If $Y\ge M(Y\sigma\mu)$, then
$Y\ge F(Y)\ge F^2(Y)\ge\cdots$. Choose an $\eps\in(0,1)$ such that $Y$ and $\supp(\mu)$
are in $\Sigma_\eps$. Then as in the proof of (1), $F^k(Y)\ge\eps I$ for all $k$, and
hence $F^k(Y)\searrow X'$ for some $X'\in\bP$. As in the proof of (1) again, $X'$ is a
solution to \eqref{F-3.1} so that $Y\ge X'=X_0$. The proof of the second assertion is
similar, where we have $Y'\le F(Y')\le F^2(Y')\le\cdots$ and use the upward continuity in
Lemma \ref{L-3.5}\,(1) and (iii).

(3)\enspace
(i)\enspace
Let $\mu,\nu\in\cPP$ and $\mu\le\nu$. Let $X_0:=M_\sigma(\mu)$ and $Y_0:=M_\sigma(\nu)$,
i.e., $X_0=M(X_0\sigma\mu)$ and $Y_0=M(Y_0\sigma\mu)$. Since $X_0\le M(X_0\sigma\nu)$ by
Lemma \ref{L-3.4}, we have $X_0\le Y_0$ by (2).

(ii)\enspace
One can easily see that $\alpha.(X\sigma\mu)=(\alpha X)\sigma(\alpha.\mu)$ for every
$X\in\bP$, $\mu\in\cPP$ and $\alpha>0$. Hence, if $X_0:=M_\sigma(\mu)$, then it follows
from (ii) for $M$ that
$$
\alpha X_0=M(\alpha.(X_0\sigma\mu))=M((\alpha X_0)\sigma(\alpha.\mu)),
$$
which gives $\alpha X_0=M_\sigma(\alpha.\mu)$.

(iii)\enspace Assume that $\mu,\mu_k\in\cPP$ and $\mu_k\searrow\mu$.
Let $X_k:=M_\sigma(\mu_k)$. Then $X_1\ge X_2\ge\cdots$ by (i) for
$M_\sigma$ already proved. By Lemma \ref{L-2.2}, there is an $\eps\in(0,1)$ such
that all $\mu_k$ are supported on $\Sigma_\eps$. From the
proof of (1) we have $X_k\ge\eps I$ for all $k$. Hence $X_k\searrow
X_0$ for some $X_0\in\bP$. It remains to prove that
$X_0=M_\sigma(\mu)$. For every $k$, since $X_k\sigma\mu_k\ge
X_0\sigma\mu$ by Lemma \ref{L-3.4}, we have
$X_k=M(X_k\sigma\mu_k)\ge M(X_0\sigma\mu)$. Hence $X_0\ge
M(X_0\sigma\mu)$ follows. On the other hand, when $k<l$, since
$X_l\sigma\mu_l\le X_l\sigma\mu_k$ by Lemma \ref{L-3.4} again, we
have
\begin{align}\label{F-3.8}
X_l=M(X_l\sigma\mu_l)\le M(X_l\sigma\mu_k).
\end{align}
For any fixed $k$, since $X_l\sigma\mu_k\searrow X_0\sigma\mu_k$ as $k<l\to\infty$ by
Lemma \ref{L-3.5}\,(1), it follows from (iii) for $M$ that
$M(X_l\sigma\mu_k)\to M(X_0\sigma\mu_k)$ in SOT. Hence from \eqref{F-3.8} we have
$X_0\le M(X_0\sigma\mu_k)$ for every $k$. Since $X_0\sigma\mu_k\searrow X_0\sigma\mu$ by
Lemma \ref{L-3.5}\,(2), we furthermore have $M(X_0\sigma\mu_k)\to M(X_0\sigma\mu)$ in SOT,
so that $X_0\le M(X_0\sigma\mu)$ follows. Therefore, we have shown that $X_0=M(X_0\sigma\mu)$,
that is, $X_0=M_\sigma(\mu)$.

When $\mu_k\nearrow\mu$, the proof is analogous, so we may omit the details.

(iv) is obvious since $I=M(\delta_I)=M(I\sigma\delta_I)$.\qed

\section{Basic properties}

In this section, as in Section 3, let $M$ be an operator mean on $\cPP$ and $\sigma$ be a
two-variable operator mean with $\sigma\ne\frak{l}$. The next properties of the deformed
operator mean $M_\sigma$ can easily been verified by using Theorem \ref{T-3.1}, whose proofs
are left to the reader.

\begin{prop}\label{P-4.1}
\begin{itemize}
\item[(1)] $M_{\frak{r}}=M$, where $\frak{r}$ is the right trivial two-variable operator mean
$X\frak{r}Y=Y$.
\item[(2)] Let $\hat M:\cPP\to\bP$ be an operator mean satisfying {\rm(i)--(iv)} and
$\hat\sigma$ be a two-variable operator mean with $\hat\sigma\ne\frak{l}$. If $M\le\hat M$
and $\sigma\le\hat\sigma$, then $M_\sigma\le\hat M_{\hat\sigma}$.
\item[(3)] Define the adjoint $M^*$ of $M$ by $M^*(\mu):=M(\mu^{-1})^{-1}$ for $\mu\in\cPP$,
where $\mu^{-1}$ is the push-forward of $\mu$ by $A\mapsto A^{-1}$, $A\in\bP$. Let
$\sigma^*$ be the adjoint of $\sigma$, i.e., $A\sigma^*B=(A^{-1}\sigma B^{-1})^{-1}$. Then
$M^*$ satisfies {\rm(i)--(iv)} again and $(M_\sigma)^*=(M^*)_{\sigma^*}$.
\end{itemize}
\end{prop}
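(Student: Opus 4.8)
The plan is to prove each of the three parts by invoking the uniqueness of the fixed point from Theorem \ref{T-3.1}\,(1) together with the order property (2) of that theorem. The recurring strategy is: to identify the deformed mean of one object with another, I exhibit a candidate and verify that it satisfies the defining fixed point equation \eqref{F-3.1}; uniqueness then forces equality. I expect none of the three parts to present a serious obstacle, since each reduces to a short verification, but the bookkeeping in part (3) around the adjoint operation will require the most care.

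For (1), I would observe that when $\sigma=\frak{r}$ we have $X\frak{r}\mu=\mu$ for every $X\in\bP$, because $X\frak{r}A=A$ pointwise and hence the push-forward of $\mu$ by $A\mapsto X\frak{r}A$ is $\mu$ itself. Thus the fixed point equation \eqref{F-3.1} becomes $X=M(\mu)$, whose unique solution is $X_0=M(\mu)$; therefore $M_{\frak{r}}(\mu)=M(\mu)$ for all $\mu\in\cPP$. For (2), set $X_0:=M_\sigma(\mu)$, so that $X_0=M(X_0\sigma\mu)$. Using $M\le\hat M$ and $\sigma\le\hat\sigma$ together with monotonicity, I would bound
$$
X_0=M(X_0\sigma\mu)\le\hat M(X_0\sigma\mu)\le\hat M(X_0\hat\sigma\mu),
$$
where the second inequality uses that $X_0\sigma\mu\le X_0\hat\sigma\mu$ (a consequence of $\sigma\le\hat\sigma$ via Lemma \ref{L-3.4}) and the monotonicity (i) of $\hat M$. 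This shows $X_0\le\hat M(X_0\hat\sigma\mu)$, and then Theorem \ref{T-3.1}\,(2) applied to $\hat M_{\hat\sigma}$ yields $X_0\le\hat M_{\hat\sigma}(\mu)$, i.e., $M_\sigma(\mu)\le\hat M_{\hat\sigma}(\mu)$.

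For (3), I would first check that $M^*$ satisfies (i)--(iv); this is routine once one notes that $A\mapsto A^{-1}$ is an order-reversing SOT-continuous bijection of $\bP$, so $\mu\le\nu$ implies $\nu^{-1}\le\mu^{-1}$, which after the two inversions restores monotonicity, and monotone convergence is likewise preserved under inversion. The heart of the matter is the identity $(M_\sigma)^*=(M^*)_{\sigma^*}$. Here I would set $X_0:=M_\sigma(\mu)$ and aim to show that $X_0^{-1}$ solves the fixed point equation defining $(M^*)_{\sigma^*}(\mu^{-1})$, namely $X=M^*(X\sigma^*\mu^{-1})$. The computation hinges on the pointwise relation $(X_0\sigma A)^{-1}=X_0^{-1}\sigma^*A^{-1}$ coming from the definition $A\sigma^*B=(A^{-1}\sigma B^{-1})^{-1}$, which after taking push-forwards gives $(X_0\sigma\mu)^{-1}=X_0^{-1}\sigma^*\mu^{-1}$. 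Applying $M^*$ and using $X_0=M(X_0\sigma\mu)$, I obtain
$$
M^*(X_0^{-1}\sigma^*\mu^{-1})=M\bigl((X_0^{-1}\sigma^*\mu^{-1})^{-1}\bigr)^{-1}
=M(X_0\sigma\mu)^{-1}=X_0^{-1},
$$
so $X_0^{-1}$ is the required fixed point; uniqueness then gives $(M^*)_{\sigma^*}(\mu^{-1})=X_0^{-1}=M_\sigma(\mu)^{-1}$, which is exactly $(M_\sigma)^*(\mu)$. The main obstacle, such as it is, lies in getting the inversion identities for push-forwards exactly right and in confirming that $\sigma^*\ne\frak{l}$ whenever $\sigma\ne\frak{l}$ (indeed $\frak{l}^*=\frak{r}\ne\frak{l}$, and more generally the adjoint of a nontrivial mean is nontrivial), so that $(M^*)_{\sigma^*}$ is well defined by Theorem \ref{T-3.1}.
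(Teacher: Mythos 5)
Your proposal is correct and follows exactly the route the paper intends: the paper states that Proposition \ref{P-4.1} is "easily verified by using Theorem \ref{T-3.1}" and leaves the details to the reader, and your arguments for (1), (2), (3) --- exhibiting a candidate solution of the fixed point equation \eqref{F-3.1} and invoking uniqueness, respectively the order property of Theorem \ref{T-3.1}\,(2) --- are precisely the natural implementation of that hint.

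Two small slips in part (3), neither of which breaks the argument. First, $\frak{l}^*=\frak{l}$, not $\frak{r}$: indeed $A\,\frak{l}^*B=(A^{-1}\frak{l}B^{-1})^{-1}=(A^{-1})^{-1}=A$, so both trivial means are self-adjoint. The fact you actually need --- that $\sigma\ne\frak{l}$ implies $\sigma^*\ne\frak{l}$ --- is still true, but the correct reason is that the adjoint operation is an involution and $\frak{l}$ is self-adjoint: if $\sigma^*=\frak{l}$ then $\sigma=(\sigma^*)^*=\frak{l}^*=\frak{l}$ (equivalently, $f_{\sigma^*}(x)=1/f_\sigma(1/x)$, so $f_{\sigma^*}\equiv1$ forces $f_\sigma\equiv1$). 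Second, your final identification is off by one inversion: since $(M_\sigma)^*(\nu):=M_\sigma(\nu^{-1})^{-1}$, what your computation proves is $(M^*)_{\sigma^*}(\mu^{-1})=M_\sigma(\mu)^{-1}=(M_\sigma)^*(\mu^{-1})$, not "$=(M_\sigma)^*(\mu)$"; the desired identity $(M^*)_{\sigma^*}=(M_\sigma)^*$ then follows at once because $\mu\mapsto\mu^{-1}$ is a bijection of $\cPP$, so one may replace $\mu$ by $\mu^{-1}$ throughout.
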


In addition to properties (i)--(iv) of $M$ that are essential for Theorem \ref{T-3.1}, we
consider the following properties:
\begin{itemize}
\item[(v)] \emph{Barycentric identity}: $M(\delta_A)=A$ for every $A\in\bP$. This contains
(iv).

\item[(vi)] \emph{Congruence invariance}: For every $\mu\in\cPP$ and every invertible
$S\in B(\cH)$,
$$
SM(\mu)S^*=M(S\mu S^*),
$$
where $S\mu S^*$ is the push-forward of $\mu$ by $A\in\bP\mapsto SAS^*\in\bP$. This
property contains (ii).

\item[(vii)] \emph{Concavity}: For every $\mu_j,\nu_j\in\cPP$ ($1\le j\le n$), any weight
$(w_1,\dots,w_n)$ with any $n\in\bN$, and $0<t<1$,
$$
M\Biggl(\sum_{j=1}^nw_j(\mu_j\triangledown_t\nu_j)\Biggr)
\ge(1-t)M\Biggl(\sum_{j=1}^nw_j\mu_j\Biggr)+tM\Biggl(\sum_{j=1}^nw_j\nu_j\Biggr),
$$
where $\mu_j\triangledown_t\nu_j$ is the push-forward of $\mu_j\times\nu_j$ by
$\triangledown_t:\bP\times\bP\to\bP$, $\triangledown_t(A,B):=(1-t)A+tB$, the $t$-weighted
arithmetic mean. The following two particular cases may be worth noting separately. The first
one is the joint concavity when restricted to the weighted $n$-variable situation.
\begin{itemize}
\item[(vii-1)] For every $A_j,B_j\in\bP$ ($1\le j\le n$), and $0<t<1$,
$$
M\Biggl(\sum_{j=1}^nw_j\delta_{(1-t)A_j+tB_j}\Biggr)
\ge(1-t)M\Biggl(\sum_{j=1}^nw_j\delta_{A_j}\Biggr)
+tM\Biggl(\sum_{j=1}^nw_j\delta_{B_j}\Biggr).
$$
\item[(vii-2)] For every $\mu,\nu\in\cPP$ and $0<t<1$,
$$
M(\mu\triangledown_t\nu)\ge(1-t)M(\mu)+tM(\nu).
$$
\end{itemize}

\item[(viii)] \emph{Arithmetic-$M$-harmonic mean inequality}: For every $\mu\in\cPP$,
$$
\cH(\mu)\le M(\mu)\le\cA(\mu),
$$
where
\begin{align}\label{F-4.1}
\cA(\mu):=\int_\bP A\,d\mu(A),\qquad
\cH(\mu):=\biggl[\int_\bP A^{-1}\,d\mu(A)\biggr]^{-1}
\end{align}
are the arithmetic and the harmonic means.
\end{itemize}

\begin{thm}\label{T-4.2}
If $M$ satisfies each of {\rm(v)}, {\rm(vi)}, {\rm(vii)},
{\rm(vii-1)}, {\rm(vii-2)}, and {\rm(viii)} $($in addition to
{\rm(i)--(iv)}$)$, then $M_\sigma$ does the same.
\end{thm}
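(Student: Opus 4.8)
The plan is to verify the six properties for $M_\sigma$ one at a time, with the main engine being the order property \eqref{F-1.1} of Theorem \ref{T-3.1}(2): to prove $M_\sigma(\mu)\le W$ it suffices to produce $W$ with $W\ge M(W\sigma\mu)$, and dually for $\ge$. I would also use repeatedly a few elementary facts about a Kubo--Ando mean $\sigma$: idempotency $A\sigma A=A$ (from $f_\sigma(1)=1$); congruence invariance $S(A\sigma B)S^*=(SAS^*)\sigma(SBS^*)$; duality $(A\sigma B)^{-1}=A^{-1}\sigma^*B^{-1}$; joint concavity of $(A,B)\mapsto A\sigma B$; and operator concavity of $B\mapsto X\sigma B$ (since $f_\sigma$ is operator concave). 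Throughout, for the fixed $\mu$ at hand I write $X:=M_\sigma(\mu)$ for the solution of $X=M(X\sigma\mu)$.

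Properties (v) and (vi) I would settle through the uniqueness in Theorem \ref{T-3.1}(1). For (v), observe $X\sigma\delta_A=\delta_{X\sigma A}$, so the equation for $M_\sigma(\delta_A)$ reads $X=M(\delta_{X\sigma A})=X\sigma A$ by (v) for $M$; since $A\sigma A=A$, the element $A$ solves it, and uniqueness gives $M_\sigma(\delta_A)=A$. For (vi), I would check that $SXS^*$ solves the equation attached to $S\mu S^*$: congruence invariance of $\sigma$ gives $(SXS^*)\sigma(S\mu S^*)=S(X\sigma\mu)S^*$, so (vi) for $M$ yields $M((SXS^*)\sigma(S\mu S^*))=SM(X\sigma\mu)S^*=SXS^*$, and uniqueness then forces $M_\sigma(S\mu S^*)=SXS^*$.

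For concavity (vii) --- the most substantial case --- write $\mu:=\sum_j w_j\mu_j$, $\nu:=\sum_j w_j\nu_j$, $\lambda:=\sum_j w_j(\mu_j\triangledown_t\nu_j)$, and put $X:=M_\sigma(\mu)$, $Y:=M_\sigma(\nu)$, $Z:=(1-t)X+tY$. By \eqref{F-1.1} it suffices to show $Z\le M(Z\sigma\lambda)$. Joint concavity of $\sigma$ gives the pointwise bound $Z\sigma((1-t)A+tB)\ge(1-t)(X\sigma A)+t(Y\sigma B)$ for all $A,B\in\bP$. Setting $\Theta:=\sum_j w_j(\mu_j\times\nu_j)$ and pushing both sides forward by the corresponding maps $\bP\times\bP\to\bP$, the integral characterization of the stochastic order (Definition \ref{D-2.1}), exactly as in the proof of Lemma \ref{L-3.4}, gives $\Phi_*\Theta\le Z\sigma\lambda$, where by linearity of push-forward $\Phi_*\Theta=\sum_j w_j[(X\sigma\mu_j)\triangledown_t(Y\sigma\nu_j)]$. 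Now monotonicity (i) and then concavity (vii) of $M$ yield $M(Z\sigma\lambda)\ge M(\Phi_*\Theta)\ge(1-t)M\bigl(\sum_j w_j(X\sigma\mu_j)\bigr)+tM\bigl(\sum_j w_j(Y\sigma\nu_j)\bigr)$; since $\sum_j w_j(X\sigma\mu_j)=X\sigma\mu$ and $\sum_j w_j(Y\sigma\nu_j)=Y\sigma\nu$, while $M(X\sigma\mu)=X$ and $M(Y\sigma\nu)=Y$ by the fixed-point equations, the right-hand side equals $Z$, giving $Z\le M(Z\sigma\lambda)$. The cases (vii-1) and (vii-2) are the specializations to point masses $\mu_j=\delta_{A_j}$, $\nu_j=\delta_{B_j}$ and to $n=1$, and the identical argument goes through using (vii-1), resp.\ (vii-2), for $M$. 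I expect this concavity step to be the main obstacle: the delicate point is transporting the pointwise joint-concavity inequality into a stochastic-order inequality between push-forward measures (the two-variable analogue of Lemma \ref{L-3.4}) and identifying the dominated measure precisely as the mixture to which (vii) for $M$ applies.

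Finally, for the arithmetic--$M$--harmonic inequality (viii) I would again invoke \eqref{F-1.1}. For the upper bound put $W:=\cA(\mu)$; by (viii) for $M$ and operator Jensen (concavity of $B\mapsto W\sigma B$), $M(W\sigma\mu)\le\cA(W\sigma\mu)=\int_\bP W\sigma A\,d\mu(A)\le W\sigma\cA(\mu)=W\sigma W=W$, so $W\ge M(W\sigma\mu)$ and hence $M_\sigma(\mu)\le\cA(\mu)$. Dually, put $V:=\cH(\mu)$; using $(V\sigma A)^{-1}=V^{-1}\sigma^*A^{-1}$, operator concavity of $\sigma^*$ in its second argument together with Jensen, and $\int_\bP A^{-1}\,d\mu(A)=V^{-1}$, one obtains $\int_\bP(V\sigma A)^{-1}\,d\mu(A)\le V^{-1}\sigma^*V^{-1}=V^{-1}$, i.e.\ $\cH(V\sigma\mu)\ge V$; then (viii) for $M$ gives $M(V\sigma\mu)\ge\cH(V\sigma\mu)\ge V$, whence $M_\sigma(\mu)\ge\cH(\mu)$. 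The continuous operator Jensen inequality used here, standard for the compactly supported $\mu$, is the only secondary technical point to pin down.
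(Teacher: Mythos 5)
Your proposal is correct, and for (v), (vi), (vii) (with (vii-1), (vii-2)) it is essentially the paper's own argument: (v) and (vi) via uniqueness of the fixed point, and (vii) via exactly the same three ingredients in a slightly reshuffled order --- the joint concavity $(X\sigma A)\triangledown_t(Y\sigma B)\le(X\triangledown_tY)\sigma(A\triangledown_tB)$ pushed forward as in Lemma \ref{L-3.4}, monotonicity and concavity of $M$, and Theorem \ref{T-3.1}\,(2).

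Where you genuinely diverge is (viii). The paper never touches a Jensen inequality there: it sets $\alpha:=f_\sigma'(1)\in(0,1]$, uses the pointwise bounds $!_\alpha\le\sigma\le\triangledown_\alpha$ together with Proposition \ref{P-4.1}\,(2) to sandwich $\cH_{!_\alpha}\le M_\sigma\le\cA_{\triangledown_\alpha}$, and then observes that the fixed-point equations $X=\cA(X\triangledown_\alpha\mu)$ and $X=\cH(X\,!_\alpha\mu)$ are solved by $\cA(\mu)$ and $\cH(\mu)$, so that $\cA_{\triangledown_\alpha}=\cA$ and $\cH_{!_\alpha}=\cH$. You instead verify the supersolution/subsolution inequalities at $W=\cA(\mu)$ and $V=\cH(\mu)$ directly and invoke \eqref{F-1.1}; this works, but it hinges on the operator Jensen inequality $\int_\bP W\sigma A\,d\mu(A)\le W\sigma\cA(\mu)$ for Bochner integrals against a bounded-support measure, which you rightly flag as the technical point to pin down (it does hold: apply a state $\omega$, note that $A\mapsto\omega(W\sigma A)$ is concave and norm-continuous on the open set $\bP$, and use a supporting affine functional at $\cA(\mu)$; alternatively approximate $\mu$ by finitely supported measures). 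What the paper's route buys is that this issue never arises, since the comparison with $\triangledown_\alpha$ and $!_\alpha$ is pointwise and the arithmetic/harmonic fixed-point equations are solved by inspection; in fact you could graft the same trick into your scheme, replacing Jensen by $W\sigma A\le W\triangledown_\alpha A$ (so $\cA(W\sigma\mu)\le W\triangledown_\alpha\cA(\mu)=W$) and dually $V\sigma A\ge V\,!_\alpha A$, making your (viii) argument elementary as well. What your route buys is self-containedness: it uses only \eqref{F-1.1} and needs neither Proposition \ref{P-4.1}\,(2) nor the identification of the deformed means $\cA_{\triangledown_\alpha}$, $\cH_{!_\alpha}$.
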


\begin{proof}
(v)\enspace
This is obvious since $A\sigma\delta_A=\delta_A$.

(vi)\enspace
Assume that $M$ satisfies (vi). For every $\mu\in\cPP$ let $X_0:=M_\sigma(\mu)$. Then for
any invertible $S\in B(\cH)$,
$$
SX_0S^*=SM(X_0\sigma\mu)S^*=M(S(X_0\sigma\mu)S^*)=M((SX_0S^*)\sigma(S\mu S^*))
$$
so that $SX_0S^*=M_\sigma(S\mu S^*)$.

(vii)\enspace
Assume that $M$ satisfies (vii). Let $\mu_j,\nu_j\in\cPP$ ($1\le j\le n$) and $(w_1,\dots,w_n)$
be any weight, and let $0<t<1$. Set $\mu:=\sum_{j=1}^nw_j\mu_j$, $\nu:=\sum_{j=1}^nw_j\nu_j$,
$X_0:=M_\sigma(\mu)$ and $Y_0:=M_\sigma(\nu)$. Since
$X_0\sigma\mu=\sum_{j=1}^nw_j(X_0\sigma\mu_j)$ and
$Y_0\sigma\nu=\sum_{j=1}^nw_j(Y_0\sigma\nu_j)$, we have
\begin{align}\label{F-4.2}
X_0\triangledown_tY_0=M(X_0\sigma\mu)\triangledown_tM(Y_0\sigma\nu)
\le M\Biggl(\sum_{j=1}^nw_j((X_0\sigma\mu_j)\triangledown_t(Y_0\sigma\nu_j))\Biggr)
\end{align}
thanks to (vii) for $M$. We now show that, for every $\mu,\nu\in\cPP$,
\begin{align}\label{F-4.3}
(X_0\sigma\mu)\triangledown_t(Y_0\sigma\nu)
\le(X_0\triangledown_tY_0)\sigma(\mu\triangledown_t\nu).
\end{align}
For $X\in\bP$ define $\psi_X:\bP\to\bP$ by $\psi_X(A):=X\sigma A$. The left-hand side of
\eqref{F-4.3} is
$$
(\triangledown_t)_*((\psi_{X_0})_*\mu\times(\psi_{Y_0})_*\nu)
=(\triangledown_t\circ(\psi_{X_0}\times\psi_{Y_0}))_*(\mu\times\nu).
$$
The right-hand side of \eqref{F-4.3} is
$$
(\psi_{X_0\triangledown_t Y_0}\circ\triangledown_t)_*(\mu\times\nu).
$$
Set $\ffi_1:=\triangledown_t\circ(\psi_{X_0}\times\psi_{Y_0})$ and
$\ffi_2:=\psi_{X_0\triangledown_t Y_0}\circ\triangledown_t$. To prove \eqref{F-4.3}, it
suffices to show that $\ffi_1(A,B)\le\ffi_2(A,B)$ for all $A,B\in\bP$. In fact, when this
holds, we have $(\ffi_1)_*(\mu\times\nu)\le(\ffi_2)_*(\mu\times\nu)$ similarly to the proof
of Lemma \ref{L-3.4}. For every $A,B\in\bP$ we have
\begin{align*}
\ffi_1(A,B)&=\psi_{X_0}(A)\triangledown_t\psi_{Y_0}(B)
=(X_0\sigma A)\triangledown_t(Y_0\sigma B) \\
&\le(X_0\triangledown_tY_0)\sigma(A\triangledown_tB)=\ffi_2(A,B),
\end{align*}
where inequality follows from \cite[Theorem 3.5]{KA}. Hence \eqref{F-4.3} has been shown,
from which we have
\begin{align*}
\sum_{j=1}^nw_j((X_0\sigma\mu_j)\triangledown_t(Y_0\sigma\nu_j))
&\le\sum_{j=1}^nw_j((X_0\triangledown_tY_0)\sigma(\mu_j\triangledown_t\nu_j)) \\
&=(X_0\triangledown_tY_0)\sigma\Biggl(\sum_{j=1}^nw_j(\mu_j\triangledown_t\nu_j)\Biggr).
\end{align*}
Applying monotonicity of $M$ to this and combining with \eqref{F-4.2} we have
$$
X_0\triangledown_tY_0\le M\Biggl((X_0\triangledown_tY_0)\sigma
\Biggl(\sum_{j=1}^nw_j(\mu_j\triangledown_t\nu_j)\Biggr)\Biggr),
$$
which implies that
$X_0\triangledown_tY_0\le M_\sigma\bigl(\sum_{j=1}^nw_j(\mu_j\triangledown_t\nu_j)\bigr)$ by
Theorem \ref{T-3.1}\,(2).

The proofs of the assertions for (vii-1) and for (vii-2) are similar
to the above proof for (vii), so we omit the details.

(viii)\enspace Assume that $M$ satisfies (viii). Let
$\alpha:=f_\sigma'(1)$; then $0<\alpha\le1$ since
$\sigma\ne\frak{l}$. It is well-known \cite[(3.3.2)]{Hi} that
$!_\alpha\le\sigma\le\triangledown_\alpha$, where $!_\alpha$ is the
$\alpha$-weighted harmonic mean $A!_\alpha B:=((1-\alpha)A^{-1}+\alpha B)^{-1}$.
By Proposition \ref{P-4.1}\,(2) we have
$$
\cH_{!_\alpha}\le M_\sigma\le\cA_{\triangledown_\alpha}.
$$
Hence it suffices to show that $\cH_{!_\alpha}=\cH$ and $\cA_{\triangledown_\alpha}=\cA$.
But it is immediate to find that the solutions of the equations
$X=\cA(X\triangledown_\alpha\mu)$ and $X=\cH(X\,!_\alpha\mu)$ are $\cA(\mu)$ and
$\cH(\mu)$, respectively.
\end{proof}

By Theorems \ref{T-2.8}, \ref{T-3.1} and \ref{T-4.2} we have

\begin{cor}\label{C-4.3}
If $M$ is a barycenter $($i.e., it satisfies {\rm(v)}$)$, then
$M_\sigma$ is a $(\dT)_\infty^W$-contractive
barycenter on $\cPP$ for any $\sigma\ne\frak{l}$.
\end{cor}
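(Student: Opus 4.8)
The plan is to derive the corollary by assembling the three results it cites, since the statement asserts exactly two features of $M_\sigma$---that it is a barycenter and that it is $(\dT)_\infty^W$-contractive---and each feature is supplied by one of Theorems \ref{T-2.8}, \ref{T-3.1}, and \ref{T-4.2} once the relevant hypotheses are checked. No new estimate is required: the entire analytic content has already been established in the preceding sections, so what remains is only a careful matching of hypotheses to conclusions.

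First I would record the structural input. Since $M$ is an operator mean on $\cPP$, it satisfies {\rm(i)--(iv)}, and because $\sigma\ne\frak{l}$, Theorem \ref{T-3.1}\,(3) applies and guarantees that $M_\sigma$ is again an operator mean on $\cPP$; that is, $M_\sigma$ itself satisfies {\rm(i)--(iv)}. With this in hand, Theorem \ref{T-2.8} may be invoked verbatim, with $M_\sigma$ playing the role of $M$, to conclude that for all $\mu,\nu\in\cPP$ one has $\dT(M_\sigma(\mu),M_\sigma(\nu))\le\delta_{\mathrm{T}}(\mu,\nu)\le(\dT)_\infty^W(\mu,\nu)$, which is precisely the $(\dT)_\infty^W$-contractivity of $M_\sigma$. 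Next, to see that $M_\sigma$ is a barycenter, i.e., that it satisfies {\rm(v)}, I would use the standing hypothesis that $M$ is a barycenter, which means $M$ satisfies {\rm(v)}; Theorem \ref{T-4.2} then transfers {\rm(v)} to $M_\sigma$, giving $M_\sigma(\delta_A)=A$ for every $A\in\bP$. Combining this with the contractivity from the previous step yields the claim.

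Rather than a genuine obstacle, the only point demanding care here is the bookkeeping of which hypotheses each cited theorem actually needs. Theorem \ref{T-2.8} uses solely that its argument obeys {\rm(i)--(iv)} and makes no reference to {\rm(v)}, so the contractivity of $M_\sigma$ is automatic for every $\sigma\ne\frak{l}$, whether or not $M$ is a barycenter; it is Theorem \ref{T-4.2} that consumes the barycentric assumption {\rm(v)} on $M$, which is exactly the hypothesis of the corollary. Since both sets of hypotheses are met, the two conclusions combine without any further argument, and the corollary follows.
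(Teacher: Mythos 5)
Your proposal is correct and follows exactly the paper's own route: the paper derives Corollary \ref{C-4.3} by citing Theorems \ref{T-2.8}, \ref{T-3.1} and \ref{T-4.2}, which is precisely your assembly (Theorem \ref{T-3.1}\,(3) makes $M_\sigma$ an operator mean, Theorem \ref{T-2.8} then gives $(\dT)_\infty^W$-contractivity, and Theorem \ref{T-4.2} transfers property (v)). Your bookkeeping remark that contractivity needs only (i)--(iv) while only the barycentric conclusion consumes (v) is also accurate.
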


\section{Examples}

In this section we provide typical examples of operator means on
$\cPP$ satisfying (i)--(viii) and their deformed operator means. We
note from Corollary \ref{C-4.3} that all of those operator means are
$(\dT)_\infty^W$-contractive barycenters on $\cPP$.

\subsection{Arithmetic and harmonic means}

The arithmetic mean $\cA$ and the harmonic mean $\cH$ on $\cPP$ are given in \eqref{F-4.1}.
It is straightforward to see that $\cA$ satisfies all the properties (i)--(viii). It is also
easy to see $\cH$ satisfies the properties (i)--(viii) except (vii) (including (vii-1)
and (vii-2)). Since it does not seem easy to show (vii) directly for $\cH$, we take a detour
by giving the following proposition, which may be of independent interest.

\begin{prop}\label{P-5.1}
For every $\mu\in\cPP$, $\cA_{\,!_{s'}}(\mu)\le\cA_{\,!_s}(\mu)$ for $0<s'<s\le1$ and
$$
\cH(\mu)=\lim_{s\searrow0}\cA_{\,!_s}(\mu)\quad\mbox{in SOT}.
$$
\end{prop}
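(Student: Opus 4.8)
The plan is to work throughout with the fixed point equation in the reformulation of Remark \ref{R-3.2}. For $\sigma=\,!_s$ the representing function is $f_{!_s}(x)=x/((1-s)x+s)$, so that $f_{!_s}'(1)=s$ and the associated function is $g_{!_s}(x)=(f_{!_s}(x)-1)/s=(x-1)/((1-s)x+s)$. Writing $X_s:=\cA_{\,!_s}(\mu)$, Theorem \ref{T-3.1} together with Remark \ref{R-3.2} says that $X_s$ is the unique element of $\bP$ with
\[
\int_\bP g_{!_s}(X_s^{-1/2}AX_s^{-1/2})\,d\mu(A)=0.
\]
I would first record that, by property (viii) for $\cA_{\,!_s}$ (Theorem \ref{T-4.2}, whose proof gives $\cH\le\cA_{\,!_s}\le\cA$), there is a single $\eps\in(0,1)$ with $\mu$ supported on $\Sigma_\eps$ and $\eps I\le X_s\le\eps^{-1}I$ for all $s\in(0,1]$; consequently the spectra of the operators $T_s:=X_s^{-1/2}AX_s^{-1/2}$ ($A\in\supp(\mu)$) all lie in the fixed interval $[\eps^2,\eps^{-2}]$. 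This uniform localization is what will make every estimate below harmless.

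For the first assertion I would exploit that, although the means $!_s$ are \emph{not} monotone in $s$, the functions $g_{!_s}$ are: a direct computation gives $\partial_s g_{!_s}(x)=(x-1)^2/((1-s)x+s)^2\ge0$, so $g_{!_{s'}}\le g_{!_s}$ pointwise for $s'<s$. Feeding the fixed point $X_s$ into the $s'$-functional and applying this pointwise inequality through the functional calculus on $T_s$ yields $\int_\bP g_{!_{s'}}(T_s)\,d\mu\le\int_\bP g_{!_s}(T_s)\,d\mu=0$. Since $f_{!_{s'}}=1+s'g_{!_{s'}}$, this reads
\[
\cA(X_s\,!_{s'}\,\mu)=X_s^{1/2}\Bigl(I+s'\!\int_\bP g_{!_{s'}}(T_s)\,d\mu\Bigr)X_s^{1/2}\le X_s,
\]
that is, $X_s\ge\cA(X_s\,!_{s'}\,\mu)$. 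Applying the order property of Theorem \ref{T-3.1}(2) to the deformed mean $\cA_{\,!_{s'}}$ (with $Y=X_s$) then gives $X_s\ge\cA_{\,!_{s'}}(\mu)=X_{s'}$, which is exactly $\cA_{\,!_{s'}}(\mu)\le\cA_{\,!_s}(\mu)$.

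For the limit I would extract the first-order term in $s$, since the naive $s\to0$ limit of the fixed point equation is vacuous. The computation I would rely on is the exact identity $g_{!_s}(x)-g_0(x)=s(x-1)^2/\bigl(x((1-s)x+s)\bigr)$, where $g_0(x):=1-x^{-1}$ is the object attached to the harmonic mean, for which $\int_\bP g_0(X^{-1/2}AX^{-1/2})\,d\mu=0$ is solved by $\cH(\mu)$. Subtracting this from the equation for $X_s$ gives
\[
I-X_s^{1/2}\cH(\mu)^{-1}X_s^{1/2}=\int_\bP g_0(T_s)\,d\mu
=-s\int_\bP\frac{(T_s-I)^2}{T_s((1-s)T_s+s)}\,d\mu.
\]
On $[\eps^2,\eps^{-2}]$ the integrand is bounded by a constant $C_0=C_0(\eps)$, so the right-hand side lies between $-C_0s\,I$ and $0$; rearranging, $0\le X_s^{1/2}\cH(\mu)^{-1}X_s^{1/2}-I\le C_0s\,I$, whence $\cH(\mu)\le X_s\le(1+C_0s)\cH(\mu)$. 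This sandwich forces $\|X_s-\cH(\mu)\|\le C_0 s\,\|\cH(\mu)\|\to0$ as $s\searrow0$, which yields the claimed SOT (indeed norm) convergence, and $X_s\searrow\cH(\mu)$ consistently with the first part.

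The main obstacle I anticipate is precisely the vacuity of the zeroth-order limit: one must recognize that the information identifying the harmonic mean sits at order $s$, and then control the remainder. I expect the cleanest route to be the explicit remainder identity above combined with the uniform spectral bound $[\eps^2,\eps^{-2}]$, which turns the remainder into a genuine $O(s)$ term of definite sign and thereby delivers both the lower bound $X_s\ge\cH(\mu)$ and the matching upper bound in a single stroke.
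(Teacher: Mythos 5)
Your proposal is correct, and both halves take a genuinely different route from the paper's proof. For the monotonicity, the paper starts from the same fixed point equation $I=\int_\bP f_{!_s}(T_s)\,d\mu$ but proves $X_s\ge\cA(X_s!_{s'}\mu)$ by applying the operator concavity of $x^{s'/s}$ (an operator Jensen inequality for the integral) together with the scalar inequality coming from the fact that $s\mapsto(1+s(t-1))^{1/s}$ is decreasing; your observation that $g_{!_s}(x)=(x-1)/((1-s)x+s)$ is pointwise nondecreasing in $s$ compares the two equations through functional calculus of the single operator $T_s$ and then integrates linearly, so no operator convexity/concavity enters at all --- this is more elementary, and your subsequent appeal to Theorem \ref{T-3.1}\,(2) is exactly as in the paper. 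For the limit, the paper first extracts a monotone SOT limit $X_s\searrow X_0$ (using $\eps I\le X_s\le\eps^{-1}I$), expands $f_{!_s}(T_s)=I-s(T_s^{-1}-I)+o(s)$ with uniform error, and identifies $X_0=\cH(\mu)$ via a bounded convergence argument in the weak operator topology; your exact remainder identity $g_{!_s}(x)-g_0(x)=s(x-1)^2/\bigl(x((1-s)x+s)\bigr)$ replaces all of this by the sandwich $\cH(\mu)\le X_s\le(1+C_0s)\cH(\mu)$, which yields operator-norm convergence with an explicit $O(s)$ rate --- strictly stronger than the SOT convergence asserted, and it delivers the lower bound $X_s\ge\cH(\mu)$ without invoking the first half. (Your use of Theorem \ref{T-4.2} and property (viii) for $\cA_{\,!_s}$ to get the uniform bounds $\eps I\le X_s\le\eps^{-1}I$ is legitimate and non-circular; the paper obtains the same bounds directly from $\eps.\delta_I\le\mu\le\eps^{-1}.\delta_I$.) The trade-off is that your argument leans on the explicit rational form of $f_{!_s}$, so it is special to the harmonic family, whereas the paper's template (operator concavity of $x^{s'/s}$, first-order expansion, monotone convergence plus bounded convergence) is reused essentially verbatim in Proposition \ref{P-5.2} to prove $P_r\to G$, where no closed-form remainder identity is available.
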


\begin{proof}
Let $\mu\in\cPP$ and set $X_s:=\cA_{\,!_s}(\mu)$ for each $s\in(0,1]$. Assume that
$0<s'<s\le1$. Since $X_s=\cA(X_s!_s\mu)$, we have
\begin{align}
I&=\cA(I!_s(X_s^{-1/2}\mu X_s^{-1/2})) \nonumber\\
&=\int_\bP\bigl[(1-s)I+s(X_s^{-1/2}AX_s^{-1/2})^{-1}\bigr]^{-1}\,d\mu(A) \nonumber\\
&=\int_\bP\bigl[I+s(X_s^{1/2}A^{-1}X_s^{1/2}-I)\bigr]^{-1}\,d\mu(A). \label{F-5.1}
\end{align}
Here note that $s\in(0,1]\mapsto(1+s(t-1))^{1/s}$ is a decreasing function for any $t>0$.
Hence we have
$$
\bigl[I+s(X_s^{1/2}A^{-1}X_s^{1/2}-I)\bigr]^{1/s}
\le\bigl[I+s'(X_s^{1/2}A^{-1}X_s^{1/2}-I)\bigr]^{1/s'}
$$
so that
\begin{align}\label{F-5.2}
\bigl[I+s(X_s^{1/2}A^{-1}X_s^{1/2}-I)\bigr]^{-s'/s}
\ge\bigl[I+s'(X_s^{1/2}A^{-1}X_s^{1/2}-I)\bigr]^{-1}.
\end{align}
Applying the operator concavity of $x^{s'/s}$ on $(0,\infty)$ to \eqref{F-5.1} and using
\eqref{F-5.2} we have
\begin{align*}
I&\ge\int_\bP\bigl[I+s(X_s^{1/2}A^{-1}X_s^{1/2}-I)\bigr]^{-s'/s}\,d\mu(A) \\
&\ge\int_\bP\bigl[I+s'(X_s^{1/2}A^{-1}X_s^{1/2}-I)\bigr]^{-1}\,d\mu(A) \\
&=\cA(I!_{s'}(X_s^{-1/2}\mu X_s^{-1/2}))
\end{align*}
so that $X_s\ge\cA(X_s!_{s'}\mu)$. Hence $X_s\ge X_{s'}$ by Theorem \ref{T-3.1}\,(2).

Choose an $\eps\in(0,1)$ such that $\mu$ is supported on $\Sigma_\eps$. Since
$\eps.\delta_I\le\mu\le\eps^{-1}.\delta_I$, we have $\eps I\le X_s\le\eps^{-1}I$ for all
$s\in(0,1]$. Hence $X_s\searrow X_0$ for some $X_0\in\bP$. It remains to show that
$X_0=\cH(\mu)$. For every $s\in(0,1]$ and every $A\in\supp(\mu)$, since
$X_s^{1/2}A^{-1}X_s^{1/2}\le\eps^{-1}X_s\le\eps^{-2}I$,
$\|X_s^{1/2}A^{-1}X_s^{1/2}-I\|\le\eps^{-2}$. Hence we write, as $s\searrow0$,
$$
\bigl[I+s(X_s^{1/2}A^{-1}X_s^{1/2}-I)\bigr]^{-1}=I-s(X_s^{1/2}A^{-1}X_s^{1/2}-I)+o(s),
$$
where $o(s)/s\to0$ in the operator norm as $s\searrow0$ uniformly for $A\in\supp(\mu)$.
Therefore, from \eqref{F-5.1} we find that
$$
I=(1+s)I-s\int_\bP X_s^{1/2}A^{-1}X_s^{1/2}\,d\mu(A)+o(s)\quad\mbox{as $s\searrow0$},
$$
which yields that
$$
I=\lim_{s\searrow0}\int_\bP X_s^{1/2}A^{-1}X_s^{1/2}\,d\mu(A)
$$
in the operator norm. On the other hand, for every $\xi\in\cH$, the bounded convergence
theorem gives
$$
\lim_{s\searrow0}\int_\bP\<\xi,X_s^{1/2}A^{-1}X_s^{1/2}\xi\>\,d\mu(A)
=\int_\bP\<\xi,X_0^{1/2}A^{-1}X_0^{1/2}\xi\>\,d\mu(A).
$$
Therefore, $I=\int_\bP X_0^{1/2}A^{-1}X_0^{1/2}\,d\mu(A)$ so that
$X_0^{-1}=\int_\bP A^{-1}\,d\mu(A)$, i.e., $X_0=\cH(\mu)$.
\end{proof}

Since $\cA_{\,!_s}$ satisfies (vii) by Theorem \ref{T-4.2}, it follows from Proposition
\ref{P-5.1} that $\cH$ satisfies (vii) as well as all other properties in (i)--(viii).

\subsection{Power means}

For each $r\in[-1,1]\setminus\{0\}$ the power mean $P_r$ on $\cPP$ is introduced as the
solution to the equation for $X\in\bP$
\begin{align}\label{F-5.3}
\begin{cases}
X=\cA(X\#_r\mu) & \text{when $r\in(0,1]$}, \\
X=\cH(X\#_{-r}\mu) & \text{when $r\in[-1,0)$},
\end{cases}
\end{align}
that is, in our notation, $P_r=\cA_{\#_r}$ and $P_{-r}=\cH_{\#_r}$ for $r\in(0,1]$.
As mentioned in Remark \ref{R-3.2}, \eqref{F-5.3} is rewritten as a typical case of
the generalized Karcher equation introduced by P\'alfia \cite{Pa}. Among the properties
in (i)--(viii), the only properties not well-known for $P_r$ are (iii) and (vii)
(including (vii-1) and (vii-2)); the other properties are included in
\cite[Theorem 6.4, Proposition 6.15]{Pa}. But, all the properties in (i)--(viii) for
$P_r$ are immediate consequences of Theorems \ref{T-3.1}\,(3) and \ref{T-4.2} applied to
$M=\cA$ or $\cH$, since $\cA$ and $\cH$ satisfies them, as shown in Section 5.1.

\begin{remark}\label{R-5.2}\rm
Note that the $(\dT)_\infty^W$-contractivity of
the power means on $\cP_{cp}(\bP)$, the set of $\mu\in {\mathcal
P}(\bP)$ with compact support, was given in \cite[Proposition
6.7]{KiLaLi}. But it follows from Theorem $\ref{T-2.8}$ that the
power means are $d_\infty^W$-contractive on $\cPP$ bigger than
$\cP_{cp}(\bP)$.
\end{remark}

\subsection{Karcher mean}

The \emph{Karcher mean} (or the \emph{Cartan barycenter}) $G$ on $\cPP$ is introduced as the
solution to the \emph{Karcher equation}
$$
\int_\bP\log X^{-1/2}AX^{-1/2}\,d\mu(A)=0
$$
for given $\mu\in\cPP$, which is the original case of the generalized Karcher equation in
\cite{Pa}. So the properties (i)--(viii) for $G$, except (iii) and (vii), are known in
\cite{Pa}. Below we will prove (iii) and (vii) for $G$ based on the convergence $P_r\to G$
as $r\to0$.

For the $n$-variable weighted case with a weight $\bw=(w_1,\dots,w_n)$, the convergence
$P_{\bw,r}(A_1,\dots,A_n)\to G_\bw(A_1,\dots,A_n)$ as $r\to0$ was first established in
\cite{LP} when $\dim\cH<\infty$ and then extended in \cite{LL} to the SOT-convergence when
$\dim\cH=\infty$. Here note that
$G_\bw(A_1,\dots,A_n)=G\bigl(\sum_{j=1}^nw_j\delta_{A_j}\bigr)$ and
$P_{\bw,r}(A_1,\dots,A_n)=P_r\bigl(\sum_{j=1}^nw_j\delta_{A_j}\bigr)$. For the probability
measure case, the convergence $P_r\to G$ was shown in \cite{KL} when $\dim\cH<\infty$, and
that for compactly supported probability measures when $\dim\cH=\infty$ was in
\cite[Theorem 7.4]{KiLaLi}. In the following we give the convergence for probability measures
in $\cPP$ when $\dim\cH=\infty$. (Even an operator norm convergence of $P_r\to G$ is given
in \cite{LP2}.)

\begin{prop}\label{P-5.2}
For every $\mu\in\cPP$,
$$
P_{-r}(\mu)\le P_{-r'}(\mu)\le G(\mu)\le P_{r'}(\mu)\le P_r(\mu)
\quad\mbox{for $0<r'<r\le1$},
$$
and
$$
G(\mu)=\lim_{r\to0}P_r(\mu)\quad\mbox{in SOT}.
$$
\end{prop}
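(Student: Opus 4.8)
The plan is to follow the template of Proposition~\ref{P-5.1}. I would first prove the monotonicity of the two chains $r\mapsto P_r(\mu)$ and $r\mapsto P_{-r}(\mu)$ together with their sandwiching by $G(\mu)$, and then read off the convergence from the resulting monotone SOT-limits by passing to the limit in the defining equations.

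For the monotonicity on the positive side, set $X_r:=P_r(\mu)=\cA_{\#_r}(\mu)$, so that $X_r=\cA(X_r\#_r\mu)$ rewrites, after congruence by $X_r^{-1/2}$, as $\int_\bP(X_r^{-1/2}AX_r^{-1/2})^r\,d\mu(A)=I$. For $0<r'<r\le1$ the exponent $r'/r$ lies in $(0,1)$, so $t\mapsto t^{r'/r}$ is operator concave; applying the integral operator Jensen inequality (exactly as in Proposition~\ref{P-5.1}) to the unital positive map $T\mapsto\int_\bP T(A)\,d\mu(A)$ I get $\int_\bP(X_r^{-1/2}AX_r^{-1/2})^{r'}\,d\mu(A)\le I$, hence $\cA(X_r\#_{r'}\mu)\le X_r$. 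Theorem~\ref{T-3.1}\,(2) then yields $P_{r'}(\mu)\le X_r=P_r(\mu)$.

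For the sandwich $G(\mu)\le P_r(\mu)$, write $G:=G(\mu)$ and use the Karcher equation $\int_\bP\log(G^{-1/2}AG^{-1/2})\,d\mu(A)=0$. The spectral inequality $e^C\ge I+C$ for self-adjoint $C$ gives $(G^{-1/2}AG^{-1/2})^r=e^{r\log(G^{-1/2}AG^{-1/2})}\ge I+r\log(G^{-1/2}AG^{-1/2})$, so integrating and invoking the Karcher equation gives $\int_\bP(G^{-1/2}AG^{-1/2})^r\,d\mu(A)\ge I$, i.e.\ $G\le\cA(G\#_r\mu)$; Theorem~\ref{T-3.1}\,(2) then forces $G\le P_r(\mu)$. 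Combined with the monotonicity this yields $G(\mu)\le P_{r'}(\mu)\le P_r(\mu)$. The negative-side inequalities $P_{-r}(\mu)\le P_{-r'}(\mu)\le G(\mu)$ I would deduce by duality: since $\cA^*=\cH$, $\#_r^*=\#_r$ and $G^*=G$, Proposition~\ref{P-4.1}\,(3) gives $P_{-r}=(\cA_{\#_r})^*=P_r^*$, so that $P_{-r}(\mu)=P_r(\mu^{-1})^{-1}$ and $G(\mu)=G(\mu^{-1})^{-1}$; applying the positive-side chain to $\mu^{-1}$ and inverting (which reverses the order) produces the negative-side chain.

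Finally, for the convergence, monotonicity together with the bound $\eps I\le P_{\pm r}(\mu)\le\eps^{-1}I$ (valid once $\supp(\mu)\subset\Sigma_\eps$) gives monotone SOT-limits $P_r(\mu)\searrow X_+\ge G(\mu)$ and $P_{-r}(\mu)\nearrow X_-\le G(\mu)$ as $r\searrow0$, and it remains to identify $X_+$ (and, symmetrically by duality, $X_-$) with $G(\mu)$. Writing the defining equation as $\int_\bP\frac{(X_r^{-1/2}AX_r^{-1/2})^r-I}{r}\,d\mu(A)=0$ and using the expansion $\frac{B^r-I}{r}=\log B+O(r)$ uniformly for $B=X_r^{-1/2}AX_r^{-1/2}\in[\eps^2 I,\eps^{-2}I]$, I obtain $\int_\bP\log(X_r^{-1/2}AX_r^{-1/2})\,d\mu(A)\to0$ as $r\searrow0$. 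The hard part will be passing to the limit inside this integral: here I would use that on the bounded order interval $\Sigma_\eps$ the map $X\mapsto\log(X^{-1/2}AX^{-1/2})$ is SOT-continuous (operator multiplication and the continuous functional calculus are SOT-continuous on norm-bounded sets), so that $X_r\to X_+$ in SOT gives pointwise SOT-convergence of the integrands, and then the bounded convergence theorem (the integrands are uniformly norm-bounded by $2|\log\eps|$) identifies the weak limit as $\int_\bP\log(X_+^{-1/2}AX_+^{-1/2})\,d\mu(A)=0$. Thus $X_+$ solves the Karcher equation, whence $X_+=G(\mu)$ by uniqueness of its solution; the same argument (or duality) gives $X_-=G(\mu)$, and therefore $\lim_{r\to0}P_r(\mu)=G(\mu)$ in SOT.
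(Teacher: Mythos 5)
Your proposal is correct and follows essentially the same route as the paper's proof: monotonicity of $r\mapsto P_r(\mu)$ via the operator Jensen inequality for $x^{r'/r}$ together with Theorem \ref{T-3.1}\,(2), the negative side by the duality $P_{-r}(\mu)=P_r(\mu^{-1})^{-1}$, and identification of the monotone SOT-limit with $G(\mu)$ through the first-order expansion $(X_r^{-1/2}AX_r^{-1/2})^r=I+r\log X_r^{-1/2}AX_r^{-1/2}+o(r)$, SOT-continuity of the functional calculus on norm-bounded sets, and the bounded convergence theorem. The one structural difference is that you prove the sandwich $G(\mu)\le P_r(\mu)$ directly from the Karcher equation via $e^C\ge I+C$ and Theorem \ref{T-3.1}\,(2), whereas the paper does not need this step at all: there the inequality falls out automatically once the decreasing limit of $P_r(\mu)$ is identified as $G(\mu)$. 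Your extra argument is valid and self-contained (and logically redundant in your own proof as well, since the limit identification already yields it), so this is a harmless addition rather than a gap.
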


\begin{proof}
Let $\mu\in\cPP$ and set $X_r:=P_r(\mu)$ for each $r\in(0,1]$. Assume that $0<r'<r\le1$.
Since $X_r=\cA(X_r\#_r\mu)$, we have
\begin{align}\label{F-5.4}
I=\cA(I\#_r(X_r^{-1/2}\mu X_r^{-1/2}))=\int_\bP(X_r^{-1/2}AX_r^{-1/2})^r\,d\mu(A).
\end{align}
By the operator concavity of $x^{r'/r}$ we have
$$
I\ge\int_\bP\bigl[(X_r^{-1/2}AX_r^{-1/2})^r\bigr]^{r'/r}\,d\mu(A)
=\int_\bP(X_r^{-1/2}AX_r^{-1/2})^{r'}\,d\mu(A)
$$
so that $X_r\ge\cA(X_r\#_{r'}\mu)$ and so $X_r\ge X_{r'}$ by Theorem \ref{T-3.1}\,(2).
Therefore, $P_{r'}(\mu)\le P_r(\mu)$, which also implies that $P_{-r}(\mu)\le P_{-r'}(\mu)$
since $P_{-r}(\mu)=P_r(\mu^{-1})^{-1}$.

Choose an $\eps\in(0,1)$ as in the proof of Proposition \ref{P-5.1}. Since $X_r\ge\eps I$,
$X_r\searrow X_0$ for some $X_0\in\bP$. Since $\|X_r^{-1/2}AX_r^{-1/2}\|\le\eps^{-2}$ for
every $r\in(0,1]$ and every $A\in\supp(\mu)$, we have, as $r\searrow0$,
$$
(X_r^{-1/2}AX_r^{-1/2})^r=\exp(r\log X_r^{-1/2}AX^{-1/2})
=I+r\log X_r^{-1/2}AX^{-1/2}+o(r),
$$
where $o(r)/r\to0$ in the operator norm as $r\searrow0$ uniformly for $A\in\supp(\mu)$.
Therefore, from \eqref{F-5.4} we find that
$$
I=I+r\int_\bP\log X_r^{-1/2}AX_r^{-1/2}\,d\mu(A)+o(r)
$$
so that
\begin{align}
\lim_{r\searrow0}\int_\bP\log X_r^{-1/2}AX_r^{-1/2}\,d\mu(A)=0
\end{align}
in the operator norm. On the other hand, note that
$X_r^{-1/2}AX_r^{-1/2}\to X_0^{-1/2}AX_0^{-1/2}$ in SOT and hence
$\log X_r^{-1/2}AX_r^{-1/2}\to\log X_0^{-1/2}AX_0^{-1/2}$ in SOT as $r\searrow0$. For every
$\xi\in\cH$, the bounded convergence theorem gives
$$
\lim_{r\searrow0}\int_\bP\<\xi,(\log X_r^{-1/2}AX_r^{-1/2})\xi\>\,d\mu(A)
=\int_\bP\<\xi,(\log X_0^{-1/2}AX_0^{-1/2})\xi\>\,d\mu(A).
$$
Therefore, $\int_\bP\log X_0^{-1/2}AX_0^{-1/2}\,d\mu(A)=0$ so that $X_0=G(\mu)$. Hence we
have $P_r(\mu)\searrow G(\mu)$ as $r\searrow0$, which also implies that
$P_{-r}(\mu)=P_r(\mu^{-1})^{-1}\nearrow G(\mu^{-1})^{-1}=G(\mu)$ as $r\searrow0$, so that
$P_r(\mu)\to G(\mu)$ in SOT as $r\to0$.
\end{proof}

Since $P_r$ satisfies (vii) as shown in Section 4.2, we see by Proposition \ref{P-5.2} that
$G$ satisfies the same. Moreover, we have

\begin{prop}
The Karcher mean $G$ satisfies {\rm(iii)}.
\end{prop}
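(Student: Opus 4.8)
The goal is to prove that the Karcher mean $G$ on $\cPP$ satisfies the monotone continuity property (iii): if $\mu_k\searrow\mu$ or $\mu_k\nearrow\mu$ in the sense of Definition \ref{D-2.3}, then $G(\mu_k)\to G(\mu)$ in SOT. The plan is to exploit the monotone approximation $P_r\to G$ established in Proposition \ref{P-5.2}, combined with the fact that each power mean $P_r$ already satisfies (iii) (it is an operator mean on $\cPP$, being a deformed operator mean $\cA_{\#_r}$ or $\cH_{\#_r}$ by Section 5.2). The strategy is to interchange the two limiting procedures $k\to\infty$ and $r\to0$.

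First I would treat the case $\mu_k\searrow\mu$. By monotonicity of $G$ (property (i), which $G$ inherits as a deformed mean, or directly from the Karcher equation), the sequence $G(\mu_k)$ is decreasing in the order of $\bP$; by Lemma \ref{L-2.2} all $\mu_k$ are supported on a common $\Sigma_\eps$, and as in the proof of Theorem \ref{T-3.1}(iii) one gets a uniform lower bound $G(\mu_k)\ge\eps I$, so $G(\mu_k)\searrow Y$ in SOT for some $Y\in\bP$. The task is then to identify $Y=G(\mu)$. The natural route is to use Proposition \ref{P-5.2}, which gives the sandwich $P_{-r}(\nu)\le G(\nu)\le P_r(\nu)$ for every $\nu\in\cPP$ and $0<r\le1$, with $P_{\pm r}(\nu)\to G(\nu)$ monotonically as $r\to 0$. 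Applying this with $\nu=\mu_k$ and using that each $P_{\pm r}$ satisfies (iii), I would write
\begin{align*}
P_{-r}(\mu)=\lim_{k\to\infty}P_{-r}(\mu_k)\le\lim_{k\to\infty}G(\mu_k)=Y
\le\lim_{k\to\infty}P_r(\mu_k)=P_r(\mu),
\end{align*}
where the limits in SOT of the monotone sequences of $P_{\pm r}(\mu_k)$ are justified by (iii) for the power means. Letting $r\searrow0$ and invoking Proposition \ref{P-5.2} for $\mu$ itself, both outer terms converge in SOT to $G(\mu)$, forcing $Y=G(\mu)$. The case $\mu_k\nearrow\mu$ is handled symmetrically, with $G(\mu_k)$ increasing and bounded above by $\eps^{-1}I$.

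The main obstacle I anticipate is making the squeeze rigorous at the level of SOT limits rather than merely the partial order. Two order limits $P_{-r}(\mu)\le Y\le P_r(\mu)$ do not by themselves give $Y=G(\mu)$; one must pass $r\to0$ and know that $P_{\pm r}(\mu)\to G(\mu)$ in SOT \emph{and} that the order relations survive the limit. Here the monotone structure from Proposition \ref{P-5.2} is essential: since $r\mapsto P_r(\mu)$ is monotone and SOT-convergent to $G(\mu)$, and similarly for $P_{-r}(\mu)$, the sandwiched quantity $Y$ satisfies $G(\mu)=\sup_r P_{-r}(\mu)\le Y\le\inf_r P_r(\mu)=G(\mu)$ in the operator order, whence $Y=G(\mu)$. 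A secondary technical point is the legitimacy of interchanging the limits, i.e.\ that $\lim_k P_{\pm r}(\mu_k)$ exists in SOT and equals $P_{\pm r}(\mu)$; this is exactly property (iii) for the power means, already available, so no new argument is needed. I expect the write-up to be short, as all the heavy lifting has been done in Proposition \ref{P-5.2} and in establishing (iii) for $P_r$; the proposition reduces entirely to a careful double-limit squeeze.
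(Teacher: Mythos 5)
Your proposal is correct and takes essentially the same route as the paper: both arguments reduce the claim to Proposition \ref{P-5.2} together with property (iii) for the power means $P_{\pm r}$, and then interchange the limits $k\to\infty$ and $r\searrow0$. The only organizational difference is that the paper executes the interchange as a swap of iterated infima of the quadratic forms $\<\xi,P_r(\mu_k)\xi\>$ (via $G=\inf_{0<r\le1}P_r$), whereas you first extract the monotone SOT limit $Y$ of $G(\mu_k)$ and then identify $Y=G(\mu)$ by the two-sided squeeze $P_{-r}(\mu)\le Y\le P_r(\mu)$; both versions are valid.
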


\begin{proof}
By Proposition \ref{P-5.2},
$$
G(\mu)=\inf_{0<r\le1}P_r(\mu)=\sup_{0<r\le1}P_{-r}(\mu),\qquad\mu\in\cPP.
$$
Since $P_r$ satisfies (iii) as shown in Section 4.2, we find that if $\mu_k\searrow\mu$ then
for any $\xi\in\cH$,
\begin{align*}
\<\xi,G(\mu)\xi\>&=\inf_{0<r\le1}\<\xi,P_r(\mu)\xi\>
=\inf_{0<r\le1}\inf_{k\ge1}\<\xi,P_r(\mu_k)\xi\> \\
&=\inf_{k\ge1}\inf_{0<r\le1}\<\xi,P_r(\mu_k)\xi\>
=\inf_{k\ge1}\<\xi,G(\mu_k)\xi\>.
\end{align*}
Therefore, $G(\mu_k)\searrow G(\mu)$. If $\mu_k\nearrow\mu$, then we have
$G(\mu_k)\nearrow G(\mu)$ similarly; or since $\mu_k^{-1}\searrow\mu^{-1}$, we have
$G(\mu_k)=G(\mu_k^{-1})^{-1}\nearrow G(\mu^{-1})^{-1}=G(\mu)$.
\end{proof}

In this way, we have seen that all of $\cA,\cH,G$ and $P_r$ for $r\in[-1,1]\setminus\{0\}$
satisfy all the properties in (i)--(viii).

We end the section with an open problem.

\begin{problem}\label{Q-5.4}\rm
Assume that $\mu,\mu_k\in\cPP$ ($k\in\bN$) are supported on $\Sigma_\eps$ for some $\eps>0$
and $\mu_k\to\mu$ weakly on $\Sigma_\eps$ with SOT, i.e.,
$\int_\bP f(A)\,d\mu_k(A)\to\int_\bP f(A)\,d\mu(A)$ for every bounded SOT-continuous real
function $f$ on $\Sigma_\eps$. Can we have $M(\mu_k)\to M(\mu)$ in SOT, for instance, when
$M=G$? This SOT-continuity property is a modification of (iii) without monotonicity
assumption $\mu_k\nearrow$ or $\mu_k\searrow$. The problem was raised in \cite[Section 8]{LL}
for the $n$-variable Karcher mean $G_\bw$: Is the map
$$
(A_1,\dots,A_n)\in(\Sigma_\eps)^n\ \longmapsto\ G_\bw(A_1,\dots,A_n)
=G\Biggl(\sum_{j=1}^nw_j\delta_{A_j}\Biggr)
$$
continuous in SOT? Note here that if $A_j,A_{j,k}\in\Sigma_\eps$
($1\le j\le n$, $k\in\bN$) and $A_{j,k}\to A_j$ in SOT as
$k\to\infty$, then
$\sum_{j=1}^nw_j\delta_{A_{j,k}}\to\sum_{j=1}^nw_j\delta_{A_j}$
weakly on $\Sigma_\eps$ with SOT. Note that the problem is true for
any two-variable operator mean $\sigma$ (in the Kubo-Ando sense),
see Proposition \ref{P-A.1} in Appendix A. For the probability
measure case, we note that the SOT-continuity property stated above
holds for $M=\cA$ and $\cH$, whose proof is not so easy and given in
Appendix A for completeness.
\end{problem}

\section{Applications}

In this section we apply the fixed point method presented in Theorem \ref{T-3.1} to some
important inequalities. To do so, it is convenient to introduce some classes of operator
means on $\cPP$.

\subsection{Derived classes of operator means}
To define some classes operator means on $\cPP$, we consider the following two procedures:
\begin{itemize}
\item[(A)] \emph{Deformation:} from an operator mean $M$ on $\cPP$ (satisfying (i)--(iv))
and a two-variable operator mean (in the Kubo-Ando sense) $\sigma\ne\frak{l}$, define the
deformed operator mean $M_\sigma$. Then $M_\sigma$ is an operator mean on $\cPP$ again by
Theorem \ref{T-3.1}.
\item[(B)] \emph{Composition:} from operator means $M_0,M_1,\dots,M_n$ on $\cPP$ and a weight
$(w_1,\dots,w_n)$ with any $n\in\bN$, define
$M(\mu):=M_0\bigl(\sum_{j=1}^nw_j\delta_{M_j(\mu)}\bigr)$ for $\mu\in\cPP$. Then it is
immediate to see that $M$ is an operator mean on $\cPP$ again.
\end{itemize}

\begin{definition}\label{D-6.1}\rm
We denote by $\fM(\cH)$, or simply $\fM$, the class of operator means on
$\cPP=\cP^\infty(\bP(\cH))$ obtained by starting from $\cA,\cH,G$ (see Sections 4.1 and 4.3)
and applying procedures (A) and (B) finitely many times. We refer to an operator mean $M$ in
the class $\fM$ as a \emph{derived operator mean} on $\cPP$.
\end{definition}

The next proposition says that the derived operator means on $\cP^\infty(\bP(\cH))$ are
defined, in a sense, independently of the choice of (separable) $\cH$.

\begin{prop}\label{P-6.2}
\begin{itemize}
\item[(1)] Assume that $\cH$ is isomorphic to another Hilbert space $\widetilde\cH$ with a
unitary $U:\cH\to\widetilde\cH$. For each $M\in\fM(\cH)$ let $\widetilde M$ be the derived
operator mean on $\cP^\infty(\bP(\widetilde\cH))$ defined by applying {\rm(A)} and {\rm(B)}
in the same way as defining $M$. Then
\begin{align}\label{F-6.1}
\widetilde M(U\mu U^*)=UM(\mu)U^*,\qquad\mu\in\cP^\infty(\bP(\cH)),
\end{align}
where $U\mu U^*$ is the push-forward of $\mu$ by the unitary conjugation
$U\cdot U^*:\bP(\cH)\to\bP(\widetilde\cH)$.
\item[(2)] Assume that $\cH=\cH_1\oplus\cH_2$ with Hilbert spaces $\cH_1,\cH_2$. For each
$M\in\fM(\cH)$ let $M^{(i)}$ be the derived operator mean on $\cP^\infty(\bP(\cH_i))$,
$i=1,2$, defined by applying {\rm(A)} and {\rm(B)} in the same way as defining $M$. Then
\begin{align}\label{F-6.2}
M(\mu_1\oplus\mu_2)=M^{(1)}(\mu_1)\oplus M^{(2)}(\mu_2),
\qquad\mu_i\in\cP^\infty(\bP(\cH_i)),\ i=1,2,
\end{align}
where $\mu_1\oplus\mu_2$ is the push-forward of $\mu_1\times\mu_2$ by the map
$(A,B)\in\bP(\cH_1)\times\bP(\cH_2)\mapsto A\oplus B\in\bP(\cH)$.
In particular,
$$
M(\mu_1\oplus\delta_{I_2})=M^{(1)}(\mu_1)\oplus I_2,\qquad\mu_1\in\cP^\infty(\bP(\cH_1)),
$$
where $I_2$ is the identity operator on $\cH_2$.
\end{itemize}
\end{prop}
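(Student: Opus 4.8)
The plan is to argue by structural induction on the way $M\in\fM(\cH)$ is built up in Definition \ref{D-6.1}, i.e.\ on the number of applications of the procedures (A) and (B) producing $M$ from the generators $\cA,\cH,G$. Both (1) and (2) rest on the same principle: each generator and each of the two procedures is \emph{covariant} with respect to the relevant structure-preserving map---unitary conjugation $U\cdot U^*$ for (1), and the block-diagonal embedding $(A,B)\mapsto A\oplus B$ for (2). I would record this covariance for $\cA,\cH,G$ in the base case and then propagate it through (A) by the uniqueness of the fixed point in Theorem \ref{T-3.1}\,(1) and through (B) by the linearity of the push-forward.

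For (1) I would first treat the base cases. Since $U\cdot U^*$ commutes with continuous functional calculus, inverses and logarithms, and since $U(X\sigma A)U^*=(UXU^*)\sigma(UAU^*)$ for every two-variable mean $\sigma$, the defining formulas give directly $\widetilde\cA(U\mu U^*)=U\cA(\mu)U^*$, $\widetilde\cH(U\mu U^*)=U\cH(\mu)U^*$, and (because the Karcher equation transforms into the Karcher equation) $\widetilde G(U\mu U^*)=UG(\mu)U^*$. For the step through (A), assuming $\widetilde M(U\mu U^*)=UM(\mu)U^*$, I set $X:=M_\sigma(\mu)$ and $Y:=UXU^*$; from $U(X\sigma\mu)U^*=Y\sigma(U\mu U^*)$ and the inductive hypothesis I get $Y=\widetilde M\bigl(Y\sigma(U\mu U^*)\bigr)$, so $Y$ solves the fixed point equation on $\widetilde\cH$ and hence, by uniqueness, $Y=\widetilde M_\sigma(U\mu U^*)$, which is \eqref{F-6.1} for $M_\sigma$. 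For the step through (B), writing $M(\mu)=M_0\bigl(\sum_j w_j\delta_{M_j(\mu)}\bigr)$, I would use $\delta_{UAU^*}=U\delta_AU^*$ and linearity of the push-forward to obtain $\sum_j w_j\delta_{\widetilde M_j(U\mu U^*)}=U\bigl(\sum_j w_j\delta_{M_j(\mu)}\bigr)U^*$, and then apply the inductive hypothesis to $M_0$.

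The main obstacle arises in (2): the composition procedure (B) applied to a product measure $\mu_1\oplus\mu_2$ produces $\sum_j w_j\delta_{M_j(\mu_1\oplus\mu_2)}$, which is supported on block-diagonal operators but is a \emph{diagonal}, not a product, measure (its two block-marginals are correlated). Thus \eqref{F-6.2} as stated is not closed under the induction, and I plan to strengthen the inductive hypothesis to the following statement: for every $M\in\fM(\cH)$ and every $\mu\in\cP^\infty(\bP(\cH))$ whose support consists of block-diagonal operators $A_1\oplus A_2$ (with $A_i\in\bP(\cH_i)$), one has $M(\mu)=M^{(1)}(\mu_1)\oplus M^{(2)}(\mu_2)$, where $\mu_i$ denotes the marginal of $\mu$ under the block projection $A_1\oplus A_2\mapsto A_i$. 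Since the block-marginals of $\mu_1\oplus\mu_2$ are exactly $\mu_1$ and $\mu_2$, equation \eqref{F-6.2} is the special case of product measures.

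The induction for this stronger statement would run parallel to part (1). In the base cases, $\cA,\cH,G$ each depend on a block-diagonally supported $\mu$ only through its block-marginals: for $\cA$ and $\cH$ by linearity of the relevant integral, and for $G$ because the Karcher equation $\int\log X^{-1/2}AX^{-1/2}\,d\mu(A)=0$ with $X=X_1\oplus X_2$ decouples into the two blockwise Karcher equations. For the step through (A), I make the block-diagonal ansatz $X=X_1\oplus X_2$; from $(X_1\oplus X_2)\sigma(A_1\oplus A_2)=(X_1\sigma A_1)\oplus(X_2\sigma A_2)$ I identify the block-marginals of $X\sigma\mu$ as $X_1\sigma\mu_1$ and $X_2\sigma\mu_2$, so the inductive hypothesis for $M$ reduces the equation $X=M(X\sigma\mu)$ to the two uncoupled equations $X_i=M^{(i)}(X_i\sigma\mu_i)$, and Theorem \ref{T-3.1}\,(1) then identifies $M_\sigma(\mu)$ with $M^{(1)}_\sigma(\mu_1)\oplus M^{(2)}_\sigma(\mu_2)$. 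For the step through (B), the inductive hypothesis makes each $M_j(\mu)=M_j^{(1)}(\mu_1)\oplus M_j^{(2)}(\mu_2)$ block-diagonal, so $\rho:=\sum_j w_j\delta_{M_j(\mu)}$ is block-diagonally supported with block-marginals $\sum_j w_j\delta_{M_j^{(i)}(\mu_i)}$; applying the strengthened hypothesis to $M_0$ and $\rho$ then yields $M(\mu)=M^{(1)}(\mu_1)\oplus M^{(2)}(\mu_2)$ and closes the induction. The final displayed special case follows by taking $\mu_2=\delta_{I_2}$ and using $M^{(2)}(\delta_{I_2})=I_2$, which holds by the normalization (iv).
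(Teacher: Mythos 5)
Your proof is correct, and its skeleton coincides with the paper's: structural induction on the construction of $M\in\fM(\cH)$, base cases $\cA,\cH,G$, propagation through (A) via the uniqueness in Theorem \ref{T-3.1}\,(1) and through (B) via push-forward bookkeeping. Part (1) matches the paper exactly (the paper merely leaves those verifications to the reader). The only genuine divergence is in part (2), where both you and the paper notice the same obstruction --- procedure (B) applied to $\mu_1\oplus\mu_2$ produces a \emph{diagonal}, correlated measure $\sum_j w_j\delta_{M_j(\mu_1\oplus\mu_2)}$ rather than a product measure, so \eqref{F-6.2} alone is not inductively closed --- but you repair it differently. The paper strengthens the hypothesis minimally, adjoining the auxiliary identity \eqref{F-6.3} for \emph{finitely supported} diagonal measures $\sum_k w_k\delta_{A_k\oplus B_k}$, which is exactly the class that procedure (B) generates; it then carries the pair \eqref{F-6.2}--\eqref{F-6.3} through the induction. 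You instead strengthen to a single statement covering \emph{all} block-diagonally supported measures, recovering both \eqref{F-6.2} (product measures) and the paper's \eqref{F-6.3} (finitely supported diagonal measures) as special cases. Your version is more unified and arguably cleaner --- one inductive invariant instead of two --- at the modest cost of having to check the stronger base cases for $\cA,\cH,G$ on arbitrary diagonally supported measures, which indeed decouple through their block-marginals exactly as you argue (linearity for $\cA,\cH$; blockwise decoupling and uniqueness of the Karcher equation for $G$). The paper's variant avoids even that, since for the generators both \eqref{F-6.2} and \eqref{F-6.3} are immediate from the defining formulas. Either way the fixed-point ansatz $X=X_1\oplus X_2$ together with $(X_1\oplus X_2)\sigma(A_1\oplus A_2)=(X_1\sigma A_1)\oplus(X_2\sigma A_2)$ and Theorem \ref{T-3.1}\,(1) does the real work, and your handling of the final special case via $M^{(2)}(\delta_{I_2})=I_2$ is exactly what the paper intends.
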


\begin{proof}
(1)\enspace
By their definitions (see Sections 4.1 and 4.3) it is immediate to see that $\cA$, $\cH$ and
$G$ satisfy \eqref{F-6.1}. Hence it suffices to show that property \eqref{F-6.1} is preserved
by procedures (A) and (B), which is easily verified and left to the reader.

(2)\enspace
It is convenient for us to consider, in addition to \eqref{F-6.2}, the following property for
$M\in\fM(\cH)$: for any weight $(w_1,\dots,w_m)$, $A_k\in\bP(\cH_1)$ and $B_k\in\bP(\cH_2)$
($1\le k\le m$),
\begin{align}\label{F-6.3}
M\Biggl(\sum_{k=1}^mw_k\delta_{A_k\oplus B_k}\Biggr)
=M^{(1)}\Biggl(\sum_{k=1}^mw_k\delta_{A_k}\Biggr)\oplus
M^{(2)}\Biggl(\sum_{k=1}^mw_k\delta_{B_k}\Biggr).
\end{align}
We now show that \eqref{F-6.2} and \eqref{F-6.3} are preserved by procedures (A) and (B).
Assume that $M\in\fM(\cH)$ satisfies \eqref{F-6.2} and \eqref{F-6.3}, and prove that
$M_\sigma$ does the same for any two-variable operator mean $\sigma\ne\frak{l}$. For
$\mu_i\in\cP^\infty(\bP(\cH_i))$ ($i=1,2$), let $X_i:=M_\sigma^{(i)}(\mu_i)$ so that
$X_i=M^{(i)}(X_i\sigma\mu_i)$. Then
\begin{align*}
X_1\oplus X_2&=M^{(1)}(X_1\sigma\mu_1)\oplus M^{(2)}(X_2\sigma\mu_2) \\
&=M((X_1\sigma\mu_1)\oplus(X_2\sigma\mu_2)) \\
&=M((X_1\oplus X_2)\sigma(\mu_1\oplus\mu_2)),
\end{align*}
where the last equality follows from the well-known property of two-variable operator means
\begin{align}\label{F-6.4}
(X_1\sigma A_1)\oplus(X_2\sigma A_2)=(X_1\oplus X_2)\sigma(A_1\oplus A_2),
\qquad A_i\in\bP(\cH_i).
\end{align}
Hence one has $X_1\oplus X_2=M_\sigma(\mu_1\oplus\mu_2)$ so that $M_\sigma$ satisfies
\eqref{F-6.2}. To prove \eqref{F-6.3} for $M_\sigma$, let
$Y_1:=M_\sigma^{(1)}\bigl(\sum_{k=1}^mw_k\delta_{A_k}\bigr)$ and
$Y_2:=M_\sigma^{(1)}\bigl(\sum_{k=1}^mw_k\delta_{B_k}\bigr)$. Then, by using \eqref{F-6.3}
for $M$ and \eqref{F-6.4}, one has
\begin{align*}
Y_1\oplus Y_2&=M^{(1)}\Biggl(Y_1\sigma\Biggl(\sum_{k=1}^mw_k\delta_{A_k}\Biggr)\Biggr)
\oplus M^{(2)}\Biggl(Y_2\sigma\Biggl(\sum_{k=1}^mw_k\delta_{B_k}\Biggr)\Biggr) \\
&=M^{(1)}\Biggl(\sum_{k=1}^mw_k\delta_{Y_1\sigma A_k}\Biggr)
\oplus M^{(2)}\Biggl(\sum_{k=1}^mw_k\delta_{Y_2\sigma B_k}\Biggr) \\
&=M\Biggl(\sum_{k=1}^mw_k\delta_{(Y_1\oplus Y_2)\sigma(A_k\oplus B_l)}\Biggr)
=M\Biggl((Y_1\oplus Y_2)\sigma\Biggl(\sum_{lk=1}^mw_k\delta_{A_k\oplus B_k}\Biggr)\Biggr),
\end{align*}
which implies that $Y_1\oplus Y_2=M_\sigma\bigl(\sum_{k=1}^mw_k\delta_{A_k\oplus B_k}\bigr)$,
i.e., $M_\sigma$ satisfies \eqref{F-6.3}.

Next, to prove that \eqref{F-6.2} and \eqref{F-6.3} are preserved by procedure (B), assume
that $M_0,M_1,\dots,M_n\in\fM(\cH)$ satisfy them, and let $M$ be given as in (B) with a
weight $(w_1,\dots,w_n)$. For $\mu_i\in\cP^\infty(\bP(\cH_i))$, by using \eqref{F-6.2} for
$M_j$ and \eqref{F-6.3} for $M_0$, one has
\begin{align*}
M(\mu_1\oplus\mu_2)
&=M_0\Biggl(\sum_{j=1}^mw_j\delta_{M_j^{(1)}(\mu_1)\oplus M_j^{(2)}(\mu_2)}\Biggr) \\
&=M_0^{(1)}\Biggl(\sum_{j=1}^nw_j\delta_{M_j^{(1)}(\mu_1)}\Biggr)
\oplus M_0^{(2)}\Biggl(\sum_{j=1}^nw_j\delta_{M_j^{(2)}(\mu_2)}\Biggr) \\
&=M^{(1)}(\mu_1)\oplus M^{(2)}(\mu_2).
\end{align*}
implying \eqref{F-6.2} for $M$. Moreover, for any weight $(w_1',\dots,w_m')$, by using
\eqref{F-6.3} for $M_j$ and $M_0$, one has
\begin{align*}
&M\Biggl(\sum_{k=1}^mw_k'\delta_{A_k\oplus B_k}\Biggr) \\
&\qquad=M_0\Biggl(\sum_{j=1}^nw_j\delta_{M_j^{(1)}\bigl(\sum_{kk=1}^mw_k'\delta_{A_k}\bigr)
\oplus M_j^{(2)}\bigl(\sum_{k=1}^mw_k'\delta_{B_k}\bigr)}\Biggr) \\
&\qquad=M_0^{(1)}\Biggl(\sum_{j=1}^nw_j
\delta_{M_j^{(1)}\bigl(\sum_{k=1}^mw_k'\delta_{A_k}\bigr)}\Biggr)
\oplus M_0^{(2)}\Biggl(\sum_{j=1}^nw_j
\delta_{M_j^{(2)}\bigl(\sum_{k=1}^mw_k'\delta_{B_k}\bigr)}\Biggr) \\
&\qquad=M^{(1)}\Biggl(\sum_{k=1}^mw_k'\delta_{A_k}\Biggr)
\oplus M^{(2)}\Biggl(\sum_{k=1}^mw_k'\delta_{B_k}\Biggr),
\end{align*}
implying \eqref{F-6.3} for $M$. Thus, \eqref{F-6.2} has been shown for all $M\in\fM(\cH)$.
\end{proof}

In what follows, in view of Proposition \ref{P-6.2}, we write a derived operator mean
$M\in\fM$ in common for any choice of the underlying Hilbert space $\cH$.

\begin{prop}\label{P-6.3}
Every operator mean $M\in\frak{M}$ satisfies all the properties in {\rm(i)--(viii)}, and
$M\in\fM$ implies $M^*\in\frak{M}$, where $M^*$ is the adjoint of $M$ (see Proposition
\ref{P-4.1}\,(3)).
\end{prop}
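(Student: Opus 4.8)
Both assertions will follow by structural induction on the construction of $M\in\fM$: I would show that the collection of maps $M$ which satisfy (i)--(viii) \emph{and} whose adjoint $M^*$ lies in $\fM$ contains the three generators $\cA,\cH,G$ and is stable under both procedures (A) and (B); since $\fM$ is by definition the smallest class built from these generators by (A) and (B), this gives the proposition. The base case is already in hand: $\cA,\cH,G$ satisfy (i)--(viii) by Sections 4.1 and 4.3, and their adjoints are $\cA^*=\cH$, $\cH^*=\cA$ and $G^*=G$, all in $\fM$. Here $\cA^*=\cH$ comes from $\cA(\mu^{-1})^{-1}=\bigl(\int_\bP A^{-1}\,d\mu(A)\bigr)^{-1}=\cH(\mu)$, $\cH^*=\cA$ then follows since $M\mapsto M^*$ is an involution, and $G^*=G$ is the self-duality $G(\mu^{-1})^{-1}=G(\mu)$ already used in the proof of Proposition \ref{P-5.2}.

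For the deformation step (A) almost everything is supplied by earlier results. Theorem \ref{T-3.1}\,(3) gives (i)--(iv) for $M_\sigma$, and Theorem \ref{T-4.2} shows that each of (v)--(viii) is inherited by $M_\sigma$; thus (i)--(viii) pass from $M$ to $M_\sigma$. For the adjoint I would invoke Proposition \ref{P-4.1}\,(3), namely $(M_\sigma)^*=(M^*)_{\sigma^*}$. Since the left trivial mean is self-adjoint ($A\frak{l}^*B=(A^{-1}\frak{l}B^{-1})^{-1}=(A^{-1})^{-1}=A$) and $\sigma\mapsto\sigma^*$ is an involution, $\sigma\ne\frak{l}$ forces $\sigma^*\ne\frak{l}$; hence $(M^*)_{\sigma^*}$ is a genuine deformation of $M^*\in\fM$ and therefore lies in $\fM$.

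The main work is the composition step (B), where $M(\mu)=M_0\bigl(\sum_{j=1}^nw_j\delta_{M_j(\mu)}\bigr)$ with $M_0,\dots,M_n$ satisfying (i)--(viii). The routine properties rest on three elementary facts about point-mass combinations: $A_j\le B_j$ for all $j$ implies $\sum_jw_j\delta_{A_j}\le\sum_jw_j\delta_{B_j}$ in the stochastic order, $\sum_jw_j\delta_{\alpha A_j}=\alpha.\bigl(\sum_jw_j\delta_{A_j}\bigr)$, and $\sum_jw_j\delta_{SA_jS^*}=S\bigl(\sum_jw_j\delta_{A_j}\bigr)S^*$. Feeding these through $M_0$ gives (i), (ii), (iv), (v), (vi) at once. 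For (iii), if $\mu_k\searrow\mu$ then each $M_j(\mu_k)\searrow M_j(\mu)$ in SOT, so $\sum_jw_j\delta_{M_j(\mu_k)}\searrow\sum_jw_j\delta_{M_j(\mu)}$ in the sense of Definition \ref{D-2.3} (condition (b) there reduces to $\sum_jw_jf(M_j(\mu_k))\to\sum_jw_jf(M_j(\mu))$ for bounded SOT-continuous $f$), and (iii) for $M_0$ finishes it. Property (viii) uses the barycentric identity $M_0(\delta_A)=A$: from $\cH(\mu)\le M_j(\mu)\le\cA(\mu)$ we get $\delta_{\cH(\mu)}\le\sum_jw_j\delta_{M_j(\mu)}\le\delta_{\cA(\mu)}$, and monotonicity of $M_0$ yields $\cH(\mu)\le M(\mu)\le\cA(\mu)$. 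The adjoint of a composition is again a composition: using $M_j(\mu^{-1})=M_j^*(\mu)^{-1}$ and $\sum_jw_j\delta_{M_j^*(\mu)^{-1}}=\bigl(\sum_jw_j\delta_{M_j^*(\mu)}\bigr)^{-1}$, one computes
$$
M^*(\mu)=M(\mu^{-1})^{-1}=M_0^*\Biggl(\sum_{j=1}^nw_j\delta_{M_j^*(\mu)}\Biggr),
$$
which is (B) applied to $M_0^*,\dots,M_n^*\in\fM$, so $M^*\in\fM$.

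I expect the only genuinely delicate point to be concavity (vii) under composition, as it is the single property that calls on concavity of the inner \emph{and} the outer means simultaneously. I would argue in two stages. Writing $P=\sum_iw_i'(\mu_i\triangledown_t\nu_i)$, $Q=\sum_iw_i'\mu_i$ and $R=\sum_iw_i'\nu_i$, property (vii) for each $M_j$ gives the operator inequality $M_j(P)\ge M_j(Q)\triangledown_tM_j(R)$; monotonicity of $M_0$ then upgrades this, via $\delta_{A\triangledown_tB}=\delta_A\triangledown_t\delta_B$, to
$$
M(P)\ge M_0\Biggl(\sum_{j=1}^nw_j\bigl(\delta_{M_j(Q)}\triangledown_t\delta_{M_j(R)}\bigr)\Biggr);
$$
and a final application of (vii) to $M_0$ bounds the right-hand side below by $(1-t)M(Q)+tM(R)$, which is exactly (vii) for $M$. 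The special cases (vii-1) and (vii-2) are specializations of this argument with $\mu_i,\nu_i$ taken to be point masses or with a single summand. The care needed is purely bookkeeping: one must match the push-forward identity for $\triangledown_t$ so that the measures entering the outer concavity inequality have precisely the form required by (vii) for $M_0$.
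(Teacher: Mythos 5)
Your proposal is correct and follows essentially the same route as the paper: verify the generators $\cA,\cH,G$, note that procedure (A) is handled by Theorem \ref{T-3.1}, Theorem \ref{T-4.2} and Proposition \ref{P-4.1}\,(3), carry out the same two-step concavity computation for procedure (B) via the identity $\delta_{(1-t)A+tB}=\delta_A\triangledown_t\delta_B$, and close the adjoint claim with the same formula $M^*(\mu)=M_0^*\bigl(\sum_{j=1}^nw_j\delta_{M_j^*(\mu)}\bigr)$ together with $\cA^*=\cH$, $G^*=G$. The only difference is that you fill in details the paper leaves to the reader (the preservation of (i)--(vi) and (viii) under composition, and the remark that $\sigma\ne\frak{l}$ forces $\sigma^*\ne\frak{l}$), which is harmless.
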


\begin{proof}
First, note that $\cA,\cH,G$ satisfies all (i)--(viii). From definition and Theorems
\ref{T-3.1} and \ref{T-4.2}, to prove the first assertion, it remains to show that the
properties in (v)--(viii) are also preserved under procedure (B). It is immediate to see
this for (v), (vi) and (viii). As for (vii), assume that operator means $M_0,M_1,\dots,M_n$
satisfy (vii), and let $M$ be defined as in (B) with a weight $(w_1,\dots,w_n)$. Let
$\mu_k,\nu_k\in\cPP$ ($1\le k\le m$), and $(w'_1,\dots,w'_m)$ be a weight, and let $0<t<1$.
Then
\begin{align*}
M\Biggl(\sum_{k=1}^mw'_k(\mu_k\triangledown_t\nu_k)\Biggr)
&=M_0\Biggl(\sum_{j=1}^nw_j\delta_{M_j\bigl(\sum_{k=1}^m
w'_k(\mu_k\triangledown_t\nu_k)\bigr)}\Biggr) \\
&\ge M_0\Biggl(\sum_{j=1}^nw_j\delta_{(1-t)M_j\bigl(\sum_{k=1}^mw'_k\mu_k\bigr)
+tM_j\bigl(\sum_{k=1}^mw'_k\nu_k\bigr)}\Biggr) \\
&=M_0\Biggl(\sum_{j=1}^kw_j\delta_{M_j\bigl(\sum_{k=1}^mw'_k\mu_k\bigr)}
\triangledown_t\delta_{M_j\bigl(\sum_{k=1}^mw'_k\nu_k\bigr)}\Biggr) \\
&\ge(1-t)M_0\Biggl(\sum_{j=1}^nw_j\delta_{M_j\bigl(\sum_{k=1}^mw'_k\mu_k\bigr)}\Biggr) \\
&\qquad+tM_0\Biggl(\sum_{j=1}^nw_j\delta_{M_j\bigl(\sum_{k=1}^mw'_k\nu_k\bigr)}\Biggr) \\
&=(1-t)M\Biggl(\sum_{k=1}^mw'_k\mu_k\Biggr)+tM\Biggl(\sum_{k=1}^mw'_k\nu_k\Biggr),
\end{align*}
so that $M$ satisfies (vii) again.

Next, note that the adjoint of $M$ defined in (B) is
$$
M^*(\mu)=M_0^*\Biggl(\sum_{j=1}^nw_j\delta_{M_j^*(\mu)}\Biggr),\qquad\mu\in\cPP.
$$
Since $\cA^*=\cH$ and $G^*=G$, it follows from Proposition \ref{P-4.1}\,(3) and the above
expression that $M\in\frak{M}$ implies $M^*\in\frak{M}$.
\end{proof}

Let $\sigma$ be a two-variable operator mean in the Kubo-Ando sense
with the representing function $f_\sigma$. Following \cite{Wa} we
say that $\sigma$ is \emph{power monotone increasing}
(\emph{p.m.i.}\ for short) if $f_\sigma(x^r)\ge f_\sigma(x)^r$ for
all $x>0$ and $r\ge1$, and \emph{power monotone decreasing}
(\emph{p.m.d.}) if $f_\sigma(x^r)\le f_\sigma(x)^r$ for all $x>0$
and $r\ge1$. We say also that $\sigma$ is \emph{g.c.v.}\ (resp.,
\emph{g.c.c}) if $f_\sigma$ is geometrically convex (resp.,
geometrically concave), i.e.,
$f_\sigma(\sqrt{xy})\le\sqrt{f_\sigma(x)f_\sigma(y)}$ (resp.,
$f_\sigma(\sqrt{xy})\ge\sqrt{f_\sigma(x)f_\sigma(y)}$) for all
$x,y>0$, that is, $\log f(e^t)$ is convex (resp., concave) on
$t\in\bR$. It is clear that $\sigma$ is p.m.i.\ if and only if
$\sigma^*$ is p.m.d., and $\sigma$ is g.c.v.\ if and only if
$\sigma^*$ is g.c.c. Note that $\triangledown_\alpha$ is g.c.v.\ and
$!_\alpha$ is g.c.c.\ for any $\alpha\in[0,1]$. Moreover, the
two-variable operator means that are simultaneously p.m.i.\ and
p.m.d.\ are only $\#_\alpha$ ($0\le\alpha\le1$). It is easy to see
that g.c.v.\ implies p.m.i.\ and g.c.c.\ implies p.m.d.\ for
$\sigma$. But in \cite{Wa2} Wada recently proved that the converse
is not true, that is, there is a p.m.i.\ $\sigma$ that is not g.c.v.

\begin{definition}\label{D-6.4}\rm
(1)\enspace
We denote by $\fM^+$ (resp., $\fM^-$) the subclass of $\fM$ obtained by starting from
$\cA,G$ (resp., $\cH,G$) and applying finitely many times procedure (A) with $\sigma$
restricted to g.c.v.\ (resp., g.c.c.) and procedure (B). Note that $M\in\fM^+$ if and only
if $M^*\in\fM^-$, and $P_r\in\fM^+$ and $P_{-r}\in\fM^-$ for $0<r\le1$ (see Section 4.2).

(2)\enspace
We denote by $\fM_0^+$ (resp., $\fM_0^-$) the subclass of $\fM$ obtained by starting from
$\cA,G$ (resp., $\cH,G$) and applying finitely many times procedure (A) with $\sigma$
restricted to p.m.i.\ (resp., p.m.d.), where procedure (B) is not applied. Note that
$M\in\frak{M}_0^+$ if and only if $M^*\in\frak{M}_0^-$, and $P_r\in\frak{M}_0^+$ and
$P_{-r}\in\frak{M}_0^-$ for $0<r\le1$.
\end{definition}

\subsection{Inequality under positive linear maps}

Let $\cH$ and $\cK$ be separable Hilbert spaces, and let $\Phi:B(\cH)\to B(\cK)$ be a normal
positive linear map. Here, $\Phi$ is normal if $A_k\nearrow A$ in $B(\cH)^+$ implies
$\Phi(A_k)\nearrow\Phi(A)$ in $B(\cK)^+$. Assume that $\Phi(I_\cH)$ is invertible, where
$I_\cH$ is the identity operator on $\cH$. Then $\Phi$ maps $\bP(\cH)$ into $\bP(\cK)$.
In the case where $\cH$ is finite-dimensional, the normality assumption is automatic and
the invertibility assumption of $\Phi(I_\cH)$ is not essential. In fact, let $P_0$ be the
support projection of $\Phi(I_\cH)$; then $\Phi$ may be considered as a map from $B(\cH)$
to $B(P_0\cK)$ and we may replace $\cK$ with $P_0\cK$.

For any two-variable operator mean $\sigma$ and any positive linear map
$\Phi:B(\cH)\to B(\cK)$, the following inequality is well-known:
\begin{align}\label{F-6.5}
\Phi(A\sigma B)\le\Phi(A)\sigma\Phi(B),\qquad A,B\in\bP(\cH),
\end{align}
which is essentially due to Ando \cite{An} while proved only for the geometric and the
harmonic means.

\begin{thm}\label{T-6.5}
Let $\Phi$ be as stated above. The for every derived operator mean $M\in\frak{M}$,
\begin{align}\label{F-6.6}
\Phi(M(\mu))\le M(\Phi_*\mu),\qquad\mu\in\cP^\infty(\bP(\cH)),
\end{align}
where $\Phi_*\mu$ is the push-forward of $\mu$ by the map
$A\in\bP(\cH)\mapsto\Phi(A)\in\bP(\cK)$.
\end{thm}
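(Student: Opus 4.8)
The plan is to argue by structural induction on the class $\fM$ following Definition~\ref{D-6.1}: I would establish \eqref{F-6.6} for the three starting means $\cA,\cH,G$, and then show that the property \eqref{F-6.6} is stable under the deformation procedure (A) and the composition procedure (B). Since every $M\in\fM$ is built from $\cA,\cH,G$ by finitely many applications of (A) and (B), this yields \eqref{F-6.6} for all derived operator means. Note that the deformation step can be proved for an arbitrary $M$ satisfying \eqref{F-6.6} without reference to the base cases, so there is no circularity in using it to treat $\cH$ and $G$ below.

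For the base cases, the arithmetic mean is immediate: since $\Phi$ is a bounded linear map it commutes with the Bochner integral, so $\Phi(\cA(\mu))=\Phi\bigl(\int_\bP A\,d\mu(A)\bigr)=\int_\bP\Phi(A)\,d\mu(A)=\cA(\Phi_*\mu)$, in fact with equality. For $\cH$ and $G$ I would avoid a direct computation and instead obtain them as monotone SOT-limits of deformations of $\cA$. By Proposition~\ref{P-5.1} we have $\cA_{\,!_s}(\mu)\searrow\cH(\mu)$ as $s\searrow0$, and by Proposition~\ref{P-5.2} we have $P_r(\mu)=\cA_{\#_r}(\mu)\searrow G(\mu)$ as $r\searrow0$; here $!_s,\#_r\ne\frak{l}$, so $\cA_{\,!_s}$ and $\cA_{\#_r}$ are deformations of $\cA$ and already satisfy \eqref{F-6.6} once step (A) is in place. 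Applying $\Phi$ to these decreasing sequences and using the normality of $\Phi$ (after shifting by $\eps^{-1}I$ so as to work with positive increasing sequences, the supports being uniformly bounded) gives $\Phi(\cA_{\,!_s}(\mu))\searrow\Phi(\cH(\mu))$ and $\Phi(P_r(\mu))\searrow\Phi(G(\mu))$ in SOT. Since $\le$ passes to SOT-limits, and $\cA_{\,!_s}(\Phi_*\mu)\searrow\cH(\Phi_*\mu)$, $P_r(\Phi_*\mu)\searrow G(\Phi_*\mu)$ by the same propositions applied to $\Phi_*\mu\in\cP^\infty(\bP(\cK))$, the inequalities for $\cA_{\,!_s}$ and $P_r$ pass to the limit and give \eqref{F-6.6} for $\cH$ and $G$.

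The decisive step is stability under deformation (A), which I expect to be the main obstacle. Assume $M$ satisfies \eqref{F-6.6} and let $\sigma\ne\frak{l}$; put $X_0:=M_\sigma(\mu)$, so that $X_0=M(X_0\sigma\mu)$. Rather than analysing $M_\sigma(\Phi_*\mu)$ directly, I would verify the self-map inequality $\Phi(X_0)\le M\bigl(\Phi(X_0)\sigma(\Phi_*\mu)\bigr)$ and then invoke the order property in Theorem~\ref{T-3.1}\,(2). To obtain it, apply $\Phi$ to the fixed point equation and use \eqref{F-6.6} for $M$: $\Phi(X_0)=\Phi(M(X_0\sigma\mu))\le M(\Phi_*(X_0\sigma\mu))$. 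Now $\Phi_*(X_0\sigma\mu)$ is the push-forward of $\mu$ by $A\mapsto\Phi(X_0\sigma A)$, while $\Phi(X_0)\sigma(\Phi_*\mu)$ is the push-forward of $\mu$ by $A\mapsto\Phi(X_0)\sigma\Phi(A)$; since \eqref{F-6.5} gives $\Phi(X_0\sigma A)\le\Phi(X_0)\sigma\Phi(A)$ pointwise and both maps are monotone, Lemma~\ref{L-3.4} yields $\Phi_*(X_0\sigma\mu)\le\Phi(X_0)\sigma(\Phi_*\mu)$ in the stochastic order. Monotonicity (i) of $M$ then gives $M(\Phi_*(X_0\sigma\mu))\le M(\Phi(X_0)\sigma(\Phi_*\mu))$, and chaining the two inequalities produces exactly $\Phi(X_0)\le M(\Phi(X_0)\sigma(\Phi_*\mu))$. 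By Theorem~\ref{T-3.1}\,(2) this forces $\Phi(X_0)\le M_\sigma(\Phi_*\mu)$, i.e.\ $\Phi(M_\sigma(\mu))\le M_\sigma(\Phi_*\mu)$.

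Finally, stability under composition (B) is routine. If $M(\mu)=M_0\bigl(\sum_{j=1}^nw_j\delta_{M_j(\mu)}\bigr)$ with $M_0,\dots,M_n$ satisfying \eqref{F-6.6}, then applying \eqref{F-6.6} for $M_0$ gives $\Phi(M(\mu))\le M_0\bigl(\sum_{j=1}^nw_j\delta_{\Phi(M_j(\mu))}\bigr)$, since the push-forward of $\sum_jw_j\delta_{M_j(\mu)}$ by $\Phi$ is $\sum_jw_j\delta_{\Phi(M_j(\mu))}$. Because $\Phi(M_j(\mu))\le M_j(\Phi_*\mu)$ for each $j$ by \eqref{F-6.6}, comparing the two weighted sums of Dirac masses in the stochastic order (the elementary fact that $\sum_jw_j\delta_{C_j}\le\sum_jw_j\delta_{D_j}$ whenever $C_j\le D_j$, seen by testing against monotone functions) and using monotonicity (i) of $M_0$ gives $M_0\bigl(\sum_jw_j\delta_{\Phi(M_j(\mu))}\bigr)\le M_0\bigl(\sum_jw_j\delta_{M_j(\Phi_*\mu)}\bigr)=M(\Phi_*\mu)$. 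Combining, $\Phi(M(\mu))\le M(\Phi_*\mu)$, which completes the induction.
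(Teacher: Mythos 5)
Your proposal is correct and follows essentially the same route as the paper's proof: structural induction over $\fM$, with the deformation step handled exactly as in the paper via \eqref{F-6.5}, Lemma \ref{L-3.4}, monotonicity of $M$, and Theorem \ref{T-3.1}\,(2), with procedure (B) treated by the same two-line chain, and with $\cH$ and $G$ obtained as monotone SOT-limits of $\cA_{\,!_s}$ and $P_r=\cA_{\#_r}$ using normality of $\Phi$ together with Propositions \ref{P-5.1} and \ref{P-5.2}. The only (inessential) difference is that the paper first reduces to unital $\Phi$ via $\Psi(A)=\Phi(I_\cH)^{-1/2}\Phi(A)\Phi(I_\cH)^{-1/2}$ and congruence invariance, which keeps all push-forwards supported in the same $\Sigma_\eps$; your argument works without this reduction, since invertibility of $\Phi(I_\cH)$ already guarantees that the relevant push-forward measures have bounded support.
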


\begin{proof}
Define $\Psi:B(\cH)\to B(\cK)$ by $\Psi(A):=\Phi(I_\cH)^{-1/2}\Phi(A)\Phi(I_\cH)^{-1/2}$;
then $\Psi$ is a unital positive map. By congruence invariance (vi) (see Section 4),
inequality \eqref{F-6.6} is equivalent to $\Psi(M(\mu))\le M(\Psi_*\mu)$. Hence we may
assume that $\Phi$ is unital. We prove that \eqref{F-6.6} is preserved under procedure (A),
that is, if $M$ satisfies \eqref{F-6.6}, then the deformed operator mean $M_\sigma$ does the
same for any operator mean $\sigma\ne\frak{l}$. For every $\mu\in\cP^\infty(\bP(\cH))$ let
$X_0:=M_\sigma(\mu)$. Then
\begin{align}\label{F-6.7}
\Phi(X_0)=\Phi(M(X_0\sigma\mu))\le M(\Phi_*(X_0\sigma\mu)).
\end{align}
Let $\psi_{X_0}(A):=X_0\sigma A$ for $A\in\bP(\cH)$ and
$\psi_{\Phi(X_0)}(B):=\Phi(X_0)\sigma B$ for $B\in\bP(\cK)$. Note that
$$
\Phi_*(X_0\sigma\mu)=(\Phi\circ\psi_{X_0})_*\mu,\qquad
\Phi(X_0)\sigma(\Phi_*\mu)=(\psi_{\Phi(X_0)}\circ\Phi)_*\mu.
$$
By inequality \eqref{F-6.5} we find that
$$
(\Phi\circ\psi_{X_0})(A)=\Phi(X_0\sigma A)\le\Phi(X_0)\sigma\Phi(A)
=(\psi_{\Phi(X_0)}\circ\Phi)(A)
$$
for all $A\in\bP(\cH)$. Hence it follows from Lemma \ref{L-3.4} that
$$
\Phi_*(X_0\sigma\mu)\le\Phi(X_0)\sigma(\Phi_*\mu).
$$
By monotonicity of $M$ this implies that
\begin{align}\label{F-6.8}
M(\Phi_*(X_0\sigma\mu))\le M(\Phi(X_0)\sigma(\Phi_*\mu)).
\end{align}
By \eqref{F-6.7} and \eqref{F-6.8}, $\Phi(X_0)\le M(\Phi(X_0)\sigma(\Phi_*\mu))$, which
implies by Theorem \ref{T-3.1}\,(2) that $\Phi(X_0)\le M_\sigma(\Phi_*\mu)$.

Next, it is immediate to see that \eqref{F-6.6} is preserved under procedure (B) as follows:
\begin{align*}
\Phi(M(\mu))&\le M_0\Biggl(\Phi_*\Biggl(\sum_{j=1}^nw_j\delta_{M_j(\mu)}\Biggr)\Biggr)
=M_0\Biggl(\sum_{j=1}^nw_j\delta_{\Phi(M_j(\mu))}\Biggr) \\
&\le M_0\Biggl(\sum_{j=1}^nw_j\delta_{M_j(\Phi_*\mu)}\Biggr)=M(\Phi_*\mu).
\end{align*}

Therefore, to prove the theorem, it remains to show that $\cA$, $\cH$ and $G$ satisfy
\eqref{F-6.6}. This is trivial for $\cA$. Apply procedure (A) to $M=\cA$ and $\sigma=\,!_s$
for $0<s\le1$; then we have $\Phi(\cA_{\,!_s}(\mu))\le\cA_{\,!_s}(\Phi_*(\mu))$. Letting
$s\searrow0$ gives $\Phi(\cH(\mu))\le\cH(\Phi_*(\mu))$ thanks to Proposition \ref{P-5.1}
since $\Phi$ is normal. Also, apply (A) to $M=\cA$ and $\sigma=\#_r$ for $0<r\le1$; then
$\Phi(P_r(\mu))\le P_r(\Phi_*\mu)$. Letting $r\searrow0$ gives $\Phi(G(\mu))\le G(\Phi_*\mu)$
thanks to Proposition \ref{P-5.2}.
\end{proof}

\begin{remark}\rm
The normality of $\Phi$ has been used only to prove \eqref{F-6.6}
for $\cH$ and $G$ in the last part of\ the above proof. So, once
$\cH$ and $G$ satisfy \eqref{F-6.6} without the normality assumption
of $\Phi$, we can remove this assumption from Theorem \ref{T-6.5}.
For instance, in view of definition of the power means $P_r$ in
\eqref{F-5.3}, note that $P_r$ for $r\in(0,1]$ satisfies
\eqref{F-6.6} without the normality of $\Phi$. But it is unknown to
us whether this is also the case for $P_r$ for $r\in[-1,0)$, in
particular, for $\cH$. In a different approach in \cite[Theorem
6.4]{Pa} it was shown that \eqref{F-6.6} holds for a certain wide
class of operator means on $\cPP$ under unital positive linear maps
$\Phi$, but the normality of $\Phi$ seems necessary there.
\end{remark}

\subsection{Ando-Hiai's inequality}

In \cite{Wa} Wada proved the extended version of Ando-Hiai's inequality \cite{AH} in such a
way that, for a two-variable operator mean $\sigma$ (in the Kubo-Ando sense), the following
conditions are equivalent:
\begin{itemize}
\item[(i)] $\sigma$ is p.m.i.\ (resp., p.m.d.) (see the paragraph before Definition
\ref{D-6.4});
\item[(ii)] for every $A,B\in\bP$, $A\sigma B\ge I$\,$\implies$\,$A^r\sigma B^r\ge I$ (resp.,
$A\sigma B\le I$\,$\implies$\,$A^r\sigma B^r\le I$) for all $r\ge1$.
\end{itemize}

Ando-Hiai's inequality for the $n$-variable Karcher mean was proved
by Yamazaki \cite{Ya}, which was extended to the case of probability
measures in \cite{KLL,HL} when $\dim\cH<\infty$. In \cite{LY} Lim
and Yamazaki discussed the Ando-Hiai type inequalities for the
$n$-variable power means when $\dim\cH<\infty$. Furthermore, in a
recent paper \cite{HSW} a comprehensive study on the Ando-Hiai type
inequalities for $n$-variable operator means has been made by a
similar fixed point method to this paper. Similar Ando-Hiai type
inequalities have been independently shown by Yamazaki \cite{Ya2}
based on the generalized Karcher equations in \cite{Pa}.

The next theorem is the extension of Ando-Hiai's inequality in \cite{KLL,HL} to the
infinite-dimensional case and to a wider class of operator means on $\cPP$ including power
means, as well as the extension of \cite[Theorem 3.1]{HSW} from $n$-variable operator means
to operator means on $\cPP$.

\begin{thm}\label{T-6.7}
If $M\in\frak{M}_0^+$, then for every $\mu\in\cPP$,
\begin{align}\label{F-6.9}
M(\mu)\ge I\ \implies\ M(\mu^r)\ge I,\ \,r\ge1.
\end{align}
Also, if $M\in\frak{M}_0^-$, then for every $\mu\in\cPP$,
\begin{align}\label{F-6.10}
M(\mu)\le I\ \implies\ M(\mu^r)\le I,\ \,r\ge1.
\end{align}
\end{thm}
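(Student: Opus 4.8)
The plan is to prove both implications by a single two-step scheme, exploiting that $\fM_0^+$ is generated from $\cA$ and $G$ by procedure (A) with power monotone increasing (p.m.i.)\ two-variable means $\sigma$ only (procedure (B) being excluded), and that \eqref{F-6.10} for $M\in\fM_0^-$ is the adjoint-dual of \eqref{F-6.9}. Indeed, since $M^*(\mu)=M(\mu^{-1})^{-1}$ and $(\mu^{-1})^r=(\mu^r)^{-1}$, one has $M^*(\mu)\le I\iff M(\mu^{-1})\ge I$ and $M^*(\mu^r)\le I\iff M(\mu^{-r})\ge I$, while $M\in\fM_0^+\iff M^*\in\fM_0^-$ and $\cH=\cA^*$ by Definition \ref{D-6.4}. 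Hence it suffices to establish \eqref{F-6.9} for every $M\in\fM_0^+$, which I would do by induction on the number of deformations: first for the generators $\cA$ and $G$, then showing that \eqref{F-6.9} is inherited under procedure (A) with p.m.i.\ $\sigma$.

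\emph{Base cases.} For $\cA$ the property is elementary: if $\cA(\mu)\ge I$ then $\int_\bP\<\xi,A\xi\>\,d\mu(A)\ge1$ for every unit vector $\xi$, and since $t\mapsto t^r$ is convex on $(0,\infty)$ for $r\ge1$, two applications of Jensen (first to the spectral measure of $A$ at $\xi$, then to the probability measure $\mu$) give
\[
\<\xi,\cA(\mu^r)\xi\>=\int_\bP\<\xi,A^r\xi\>\,d\mu(A)\ge\int_\bP\<\xi,A\xi\>^r\,d\mu(A)\ge\Bigl(\int_\bP\<\xi,A\xi\>\,d\mu(A)\Bigr)^r\ge1,
\]
so $\cA(\mu^r)\ge I$. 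For $G$ I would use the monotone representation $G(\mu)=\inf_{0<s\le1}P_s(\mu)$ from Proposition \ref{P-5.2}: if $G(\mu)\ge I$ then $P_s(\mu)\ge I$ for all $s\in(0,1]$; since each $P_s=\cA_{\#_s}$ is a p.m.i.\ deformation of $\cA$, the inductive step below yields $P_s(\mu^r)\ge I$, and taking the infimum over $s$ gives $G(\mu^r)\ge I$ (so no circularity arises, the deformation step being proved for an arbitrary $M$ satisfying \eqref{F-6.9}).

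\emph{Inductive (deformation) step, the crux.} Assume $M$ satisfies \eqref{F-6.9} and $\sigma\ne\frak{l}$ is p.m.i.; let $X_0:=M_\sigma(\mu)\ge I$, so $X_0=M(X_0\sigma\mu)$. Using congruence invariance (vi) of $M$, which is preserved by deformation (Theorem \ref{T-4.2}), the normalized measure $\tilde\mu:=X_0^{-1/2}\mu X_0^{-1/2}$ satisfies $M(I\sigma\tilde\mu)=X_0^{-1/2}M(X_0\sigma\mu)X_0^{-1/2}=I$. Applying \eqref{F-6.9} for $M$ gives $M\bigl((I\sigma\tilde\mu)^r\bigr)\ge I$, and the one-variable operator inequality $f_\sigma(B)^r\le f_\sigma(B^r)$ (valid by functional calculus, reducing on $\mathrm{spec}(B)$ to the scalar p.m.i.\ relation $f_\sigma(t)^r\le f_\sigma(t^r)$) together with Lemma \ref{L-3.4} yields $(I\sigma\tilde\mu)^r\le I\sigma\tilde\mu^r$; hence $M(I\sigma\tilde\mu^r)\ge I$, and \eqref{F-1.1} (Theorem \ref{T-3.1}(2)) gives $M_\sigma(\tilde\mu^r)\ge I$, where $\tilde\mu^r$ is the push-forward of $\mu$ by $A\mapsto(X_0^{-1/2}AX_0^{-1/2})^r$.

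\emph{Main obstacle.} What remains is to pass from the congruence-normalized $\tilde\mu^r$ back to $\mu^r$ (the push-forward by $A\mapsto A^r$) so as to conclude $M_\sigma(\mu^r)\ge I$. A naive pointwise comparison fails: via \eqref{F-1.1} and congruence invariance the target is equivalent to $M(I\sigma\hat\mu)\ge I$ with $\hat\mu$ the push-forward by $A\mapsto X_0^{-r/2}A^rX_0^{-r/2}$, and dominating this by $\tilde\mu^r$ would require $X_0^{-r/2}A^rX_0^{-r/2}\ge(X_0^{-1/2}AX_0^{-1/2})^r$, i.e.\ (for $r=2$) $B^*B\ge BB^*$ with $B=AX_0^{-1/2}$, which is false in general. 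This is precisely the non-commutative gap between powering and congruence that underlies the Ando--Hiai phenomenon, and I expect it to be resolved not pointwise but through the full two-variable operator Ando--Hiai inequality of Wada (the characterization recalled before Definition \ref{D-6.4}) embedded in the fixed-point and monotone-limit machinery, exactly as in \cite[Theorem 3.1]{HSW}: monotonicity (i), homogeneity (ii), and monotone continuity (iii) of $M$, together with \eqref{F-1.1}, are the tools that globalize Wada's normalized inequality to the measure-valued setting. Verifying this bridge is where the real work lies, the base cases and the reduction above being comparatively routine.
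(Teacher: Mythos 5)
Your global strategy---reduce \eqref{F-6.10} to \eqref{F-6.9} by adjoint duality, prove \eqref{F-6.9} for the generators $\cA$ and $G$ (the latter via $P_s\searrow G$ from Proposition \ref{P-5.2}), and show the property is inherited under deformation by a p.m.i.\ mean---is exactly the paper's, and your base cases are fine (your scalar Jensen argument for $\cA$ even covers all $r\ge1$ in one stroke, where the paper uses operator convexity of $t^r$ on $[1,2]$ plus iteration). The genuine gap is in the deformation step, which you correctly single out as the crux but do not close. Moreover, your analysis of the obstacle is misdirected on two counts. First, the target $M_\sigma(\mu^r)\ge I$ is \emph{not} equivalent to $M(I\sigma\hat\mu)\ge I$ with $\hat\mu$ the push-forward by $A\mapsto X_0^{-r/2}A^rX_0^{-r/2}$; that is the normalized form of the stronger claim $M_\sigma(\mu^r)\ge X_0^r$, which indeed cannot be proved this way. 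Second, the resolution is not Wada's two-variable Ando--Hiai theorem fed through fixed-point machinery: Wada's characterization is never invoked in the paper's proof, and p.m.i.\ enters only through the scalar inequality $f_\sigma(t)^r\le f_\sigma(t^r)$, which you already used.

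The missing idea is the choice of comparison point in Theorem \ref{T-3.1}\,(2): since $X_0\ge I$, it suffices to prove the weaker statement $M_\sigma(\mu^r)\ge X_0$ (this is also why Remark \ref{R-6.8} records $M(\mu^r)\ge M(\mu)$ rather than $\ge M(\mu)^r$). By Theorem \ref{T-3.1}\,(2) and congruence invariance (vi) of $M$, this amounts to showing $I\le M\bigl(I\sigma(X_0^{-1/2}\mu^rX_0^{-1/2})\bigr)$---congruence by $X_0^{-1/2}$, not $X_0^{-r/2}$---and the bridge from your intermediate inequality $I\le M(I\sigma\tilde\mu^r)$ is the pointwise estimate
$$
(X_0^{-1/2}AX_0^{-1/2})^r\le X_0^{-1/2}A^rX_0^{-1/2},\qquad A\in\bP,
$$
which is Hansen--Pedersen's inequality \cite[Theorem 2.1]{HP} for the contraction $X_0^{-1/2}\le I$ (operator convexity of $t^r$ with value $0$ at $0$), valid only for $1\le r\le2$. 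Applying the operator monotone $f_\sigma$ to it, then Lemma \ref{L-3.4} and monotonicity (i) of $M$, gives $I\le M(I\sigma\tilde\mu^r)\le M\bigl(I\sigma(X_0^{-1/2}\mu^rX_0^{-1/2})\bigr)$, hence $X_0\le M(X_0\sigma\mu^r)$ and so $X_0\le M_\sigma(\mu^r)$ for $1\le r\le2$. The general case then follows by writing $r=2^kr_0$ with $1\le r_0<2$ and iterating the $[1,2]$ case, using $(\mu^a)^b=\mu^{ab}$ and the fact that each step reproduces the hypothesis $M_\sigma(\cdot)\ge I$. Without (a) the comparison with $X_0$ rather than $X_0^r$, (b) Hansen--Pedersen on $[1,2]$, and (c) the dyadic iteration, your induction does not go through, so as written the proof is incomplete.
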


\begin{proof}
We show that if $M\in\frak{M}$ satisfies \eqref{F-6.9}, then $M_\sigma$ does the same for
every p.m.i.\ two-variable operator mean $\sigma$. For every $\mu\in\cPP$ assume that $X_0:=M_\sigma(\mu)\ge I$. We
first prove the case where $1\le r\le2$. Since $X_0=M(X_0\sigma\mu)$ so that
$I=M(I\sigma(X_0^{-1/2}\mu X_0^{-1/2}))$, we have
$I\le M\bigl((I\sigma(X_0^{-1/2}\mu X_0^{-1/2}))^r\bigr)$ for all $r\ge1$. Note that
\begin{align}
(I\sigma(X_0^{-1/2}\mu X_0^{-1/2}))^r
&=\bigl(\pi_r\circ f_\sigma\circ\Gamma_{X_0^{-1/2}}\bigr)_*\mu, \label{F-6.11}\\
I\sigma(X_0^{-1/2}\mu^rX_0^{-1/2})
&=\bigl(f_\sigma\circ\Gamma_{X_0^{-1/2}}\circ\pi_r\bigr)_*\mu, \label{F-6.12}
\end{align}
where $\Gamma_{X_0^{-1/2}}(A):=X_0^{-1/2}AX_0^{-1/2}$, $\pi_r(A):=A^r$, and $f_\sigma(A)$ is
the functional calculus of $A$ by the representing function $f_\sigma$. Since
$f_\sigma(x)^r\le f_\sigma(x^r)$ for $x>0$, we find that
\begin{align*}
\bigl(\pi_r\circ f_\sigma\circ\Gamma_{X_0^{-1/2}}\bigr)(A)
&=f_\sigma(X_0^{-1/2}AX_0^{-1/2})^r \\
&\le f_\sigma((X_0^{-1/2}AX_0^{-1/2})^r) \\
&\le f_\sigma(X_0^{-1/2}A^rX_0^{-1/2}) \\
&=\bigl(f_\sigma\circ\Gamma_{X_0^{-1/2}}\circ\pi_r\bigr)(A),\qquad A\in\bP,
\end{align*}
where we have used Hansen-Pedersen's inequality \cite[Theorem 2.1]{HP} for the above latter
inequality thanks to $X_0^{-1/2}\le I$. Applying Lemma \ref{L-3.4} to \eqref{F-6.11} and
\eqref{F-6.12} implies that
$$
(I\sigma(X_0^{-1/2}\mu X_0^{-1/2}))^r\le I\sigma(X_0^{-1/2}\mu^rX_0^{-1/2}).
$$
The monotonicity of $M$ gives
$$
I\le M\bigl((I\sigma(X_0^{-1/2}\mu X_0^{-1/2}))^r\bigr)
\le M(I\sigma(X_0^{-1/2}\mu^rX_0^{-1/2}))
$$
so that $X_0\le M(X_0\sigma\mu^r)$, which implies by Theorem \ref{T-3.1}\,(2) that
$X_0\le M_\sigma(\mu^r)$ and hence $M_\sigma(\mu^r)\ge I$. For general $r\ge1$ write
$r=2^kr_0$ where $k\in\bN$ and $1\le r_0<2$, and iterate the case $1\le r\le2$ to obtain
\eqref{F-6.9} for $M_\sigma$.

In view of Definition \ref{D-6.4}, to prove \eqref{F-6.9} for any $M\in\frak{M}_0^+$, it
remains to show that $\cA$ and $G$ satisfy \eqref{F-6.9}. For $\cA$, when $1\le r\le2$, the
operator convexity of $x^r$ on $(0,\infty)$ gives $\cA(\mu)^r\le\cA(\mu^r)$, so that
\eqref{F-6.9} for $\cA$ holds in this case. The general case $r\ge1$ follows by iteration
as in the last of the first part of the proof. Next, apply the procedure proved in the first
part to $M=\cA$ and $\sigma=\#_\alpha$ for $0<\alpha\le1$; then \eqref{F-6.9} holds for
$P_\alpha$ for $0<\alpha\le1$. Now assume that $G(\mu)\ge I$. For every $\alpha\in(0,1]$,
since $P_\alpha(\mu)\ge G(\mu)\ge I$, we have $P_\alpha(\mu^r)\ge I$ for all $r\ge1$. Letting
$\alpha\searrow0$ gives $G(\mu^r)\ge I$ for all $r\ge1$ thanks to Proposition \ref{P-5.2}.

The latter assertion immediately follows from the first, since \eqref{F-6.9} for $M$ is
equivalent to \eqref{F-6.10} for $M^*$, and $M\in\frak{M}_0^+$ $\iff$ $M^*\in\frak{M}_0^-$.
\end{proof}

\begin{remark}\label{R-6.8}\rm
Note that the proof of Theorem \ref{T-6.7} indeed verifies a slightly stronger result that,
for every $\mu\in\cPP$, if $M\in\frak{M}_0^+$ then
$$
M(\mu)\ge I\ \implies\ M(\mu^r)\ge M(\mu),\ \ r\ge1,
$$
and if $M\in\frak{M}_0^-$ then
$$
M(\mu)\le I\ \implies\ M(\mu^r)\le M(\mu),\ \ r\ge1.
$$
These are equivalently stated in such a way that, for every $\mu\in\cPP$, if
$M\in\frak{M}_0^+$ then
$$
M(\mu^r)\ge\lambda_{\min}(M(\mu))^{r-1}M(\mu),\qquad r\ge1,
$$
and if $M\in\frak{M}_0^-$ then
$$
M(\mu^r)\le\|M(\mu)\|^{r-1}M(\mu),\qquad r\ge1.
$$
In the above, $\lambda_{\min}(A)$ is the minimum of the spectrum of $A\in\bP$.
\end{remark}

\begin{cor}\label{C-6.9}
Let $\alpha\in(0,1]$. Then for every $\mu\in\cPP$,
\begin{align}
P_\alpha(\mu)\ge I\ &\implies\ P_\alpha(\mu^r)\ge I,\ \ r\ge1, \label{F-6.13}\\
P_{-\alpha}(\mu)\le I\ &\implies\ P_{-\alpha}(\mu^r)\le I,\ \ r\ge1. \label{F-6.14}
\end{align}
\end{cor}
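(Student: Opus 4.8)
The plan is to recognize that Corollary \ref{C-6.9} is an immediate specialization of Theorem \ref{T-6.7}, so the only thing I would actually verify is the membership $P_\alpha\in\fM_0^+$ and $P_{-\alpha}\in\fM_0^-$ for $\alpha\in(0,1]$. This membership is already flagged in Definition \ref{D-6.4}(2); I would spell out why it holds and then invoke the theorem.

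By \eqref{F-5.3}, $P_\alpha=\cA_{\#_\alpha}$ and $P_{-\alpha}=\cH_{\#_\alpha}$ for $\alpha\in(0,1]$, i.e., each power mean is obtained from $\cA$ or $\cH$ by a single application of the deformation procedure (A) with the weighted geometric mean $\#_\alpha$ (and no use of procedure (B)). The representing function of $\#_\alpha$ is $f_{\#_\alpha}(x)=x^\alpha$, which satisfies $f_{\#_\alpha}(x^r)=x^{\alpha r}=(x^\alpha)^r=f_{\#_\alpha}(x)^r$ for all $x>0$ and $r\ge1$; hence $\#_\alpha$ is simultaneously p.m.i.\ and p.m.d.\ (as already remarked just before Definition \ref{D-6.4}). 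Consequently $P_\alpha=\cA_{\#_\alpha}$ lies in $\fM_0^+$, being one (A)-step from $\cA$ with the p.m.i.\ mean $\#_\alpha$, and dually $P_{-\alpha}=\cH_{\#_\alpha}$ lies in $\fM_0^-$; the latter may alternatively be read off from $P_{-\alpha}=(P_\alpha)^*$ together with the equivalence $M\in\fM_0^+\iff M^*\in\fM_0^-$ noted in Definition \ref{D-6.4}(2).

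With these memberships in hand, implication \eqref{F-6.13} is exactly \eqref{F-6.9} of Theorem \ref{T-6.7} applied to $M=P_\alpha\in\fM_0^+$, and implication \eqref{F-6.14} is exactly \eqref{F-6.10} applied to $M=P_{-\alpha}\in\fM_0^-$. I expect no real obstacle here: all the analytic content—reducing to the range $1\le r\le2$ by way of Hansen--Pedersen's inequality and then iterating to general $r\ge1$—is discharged once and for all inside the proof of Theorem \ref{T-6.7}. The only point to keep in mind is that the corollary is stated over the full class $\cPP$ rather than over compactly supported measures, but since Theorem \ref{T-6.7} is itself formulated for $\mu\in\cPP$, nothing beyond the membership check is required.
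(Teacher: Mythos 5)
Your proposal is correct and coincides with the paper's intended derivation: the paper gives no separate proof of Corollary \ref{C-6.9}, treating it as the specialization of Theorem \ref{T-6.7} to $M=P_\alpha\in\fM_0^+$ and $M=P_{-\alpha}\in\fM_0^-$, with that membership recorded in Definition \ref{D-6.4}\,(2) (and realized inside the proof of Theorem \ref{T-6.7} by applying the deformation step to $M=\cA$, $\sigma=\#_\alpha$, as Remark \ref{R-6.10} confirms). Your verification that $\#_\alpha$ is simultaneously p.m.i.\ and p.m.d., together with the duality $P_{-\alpha}=(P_\alpha)^*$, is exactly the check the paper leaves implicit.
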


\begin{remark}\label{R-6.10}\rm
Corollary \ref{C-6.9} has been shown based on Theorem \ref{T-3.1} applied to the simple case
$M=\cA$. Since $P_\alpha\searrow G$ and $P_{-\alpha}\nearrow G$ as $r\searrow0$ by
Proposition \ref{P-5.2}, the corollary in turn gives Ando-Hiai's inequality for $G$. This is
a new proof of Ando-Hiai's inequality even for the two-variable geometric mean.
\end{remark}

\subsection{Modified Ando-Hiai's inequalities}

We here present two more Ando-Hiai type inequalities, which are weaker than Theorem
\ref{T-6.7} (also Remark \ref{R-6.8}) in the sense that inequalities are between two
deformed operator means $M_\sigma$ and $M_{\sigma_r}$ on $\cPP$, where $0<r\le1$ and
$\sigma_r$ is the modified two-variable operator mean with the representing function
$f_\sigma(x^r)$. But instead, those have an advantage since there are no restrictions on
$M$ (except congruence invariance) and $\sigma$ unlike in Theorem \ref{T-6.7}.

\begin{thm}\label{T-6.11}
Let $M$ be an operator mean on $\cPP$ satisfying {\rm(vi)} (congruence invariance) in
Section 4, and $\sigma$ be any two-variable operator mean with $\sigma\ne\frak{l}$. Then
for every $\mu\in\cPP$,
\begin{align}
&M_\sigma(\mu)\ge I\ \implies\ M_{\sigma_{1/r}}(\mu^r)\ge M_\sigma(\mu),
\ \ r\ge1, \label{F-6.15} \\
&M_\sigma(\mu)\le I\ \implies\ M_{\sigma_{1/r}}(\mu^r)\le M_\sigma(\mu),
\ \ r\ge1. \label{F-6.16}
\end{align}
\end{thm}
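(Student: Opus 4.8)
The plan is to reduce \eqref{F-6.15} to a single pointwise operator inequality and then recognize that inequality as an instance of Hansen--Pedersen's inequality. Throughout fix $\mu\in\cPP$, $r\ge1$, and put $X_0:=M_\sigma(\mu)$, assuming $X_0\ge I$. First I would record that $\sigma_{1/r}$ is a genuine two-variable operator mean with $\sigma_{1/r}\ne\frak{l}$: its representing function $f_\sigma(x^{1/r})$ is operator monotone (a composition of the operator monotone functions $x^{1/r}$ and $f_\sigma$) and nonconstant, so $M_{\sigma_{1/r}}$ is defined and, by Theorem \ref{T-4.2}, inherits congruence invariance (vi) from $M$.

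The backbone of the argument is a chain of reductions. Since $X_0=M(X_0\sigma\mu)$, applying (vi) with $S=X_0^{-1/2}$ together with congruence invariance of the Kubo--Ando mean $\sigma$ turns the hypothesis into $I=M(I\sigma\nu)$, where $\nu:=X_0^{-1/2}\mu X_0^{-1/2}$. For the conclusion, congruence invariance of $M_{\sigma_{1/r}}$ shows that $M_{\sigma_{1/r}}(\mu^r)\ge X_0$ is equivalent to $M_{\sigma_{1/r}}(\lambda)\ge I$ with $\lambda:=X_0^{-1/2}\mu^rX_0^{-1/2}$. By Theorem \ref{T-3.1}\,(2) it then suffices to prove $I\le M(I\sigma_{1/r}\lambda)$; and since $I=M(I\sigma\nu)$, monotonicity (i) of $M$ reduces this to the stochastic-order inequality $I\sigma\nu\le I\sigma_{1/r}\lambda$. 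Both sides are push-forwards of $\mu$, namely by $\varphi(A):=f_\sigma(X_0^{-1/2}AX_0^{-1/2})$ and by $\psi(A):=f_\sigma((X_0^{-1/2}A^rX_0^{-1/2})^{1/r})$ respectively, using $I\sigma_{1/r}B=f_\sigma(B^{1/r})$. Because the two measures are push-forwards of the \emph{same} $\mu$, testing against monotone bounded functions (as in the proof of Lemma \ref{L-3.4}, where the step invoking $\mu\le\nu$ now becomes an equality) shows that the pointwise inequality $\varphi(A)\le\psi(A)$ for $A\in\bP$ already yields $\varphi_*\mu\le\psi_*\mu$.

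This isolates the only substantive point, which I expect to be the crux: the pointwise inequality $X_0^{-1/2}AX_0^{-1/2}\le(X_0^{-1/2}A^rX_0^{-1/2})^{1/r}$. Since $f_\sigma$ is operator monotone it suffices to prove this with $f_\sigma$ stripped off, and here the hypothesis $X_0\ge I$ enters decisively: it makes $C:=X_0^{-1/2}$ a contraction. Then the inequality is exactly Hansen--Pedersen's inequality \cite[Theorem 2.1]{HP} applied to the operator concave function $g(x)=x^{1/r}$ (with $1/r\in(0,1]$ and $g(0)=0$) and the contraction $C$: namely $g(CA^rC)\ge Cg(A^r)C$, i.e.\ $(CA^rC)^{1/r}\ge CAC$. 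Recognizing the reduced inequality as this instance of Hansen--Pedersen --- rather than trying to manipulate $r$th powers directly, which are not operator monotone --- is the decisive step; note the contrast with Theorem \ref{T-6.7}, where the operator convex $x^r$ and the reverse contraction estimate were used.

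Finally, \eqref{F-6.16} follows by the same scheme. The cleanest route is to prove the companion pointwise fact that when $C:=X_0^{-1/2}\ge I$ (i.e.\ $X_0\le I$) one has $CAC\ge(CA^rC)^{1/r}$; this again comes from Hansen--Pedersen for $g(x)=x^{1/r}$, now applied to the contraction $C^{-1}=X_0^{1/2}$ and conjugated back, and it feeds through the mirror reductions (using the reverse implication in Theorem \ref{T-3.1}\,(2) and monotonicity) to give $M_{\sigma_{1/r}}(\mu^r)\le M_\sigma(\mu)$. Alternatively one can deduce \eqref{F-6.16} from \eqref{F-6.15} by passing to the adjoint mean, using $(M_\sigma)^*=(M^*)_{\sigma^*}$ and the identity $(\sigma_{1/r})^*=(\sigma^*)_{1/r}$ together with Proposition \ref{P-4.1}\,(3), provided $\sigma\ne\frak{r}$; the direct route avoids this proviso and is uniform in $\sigma$.
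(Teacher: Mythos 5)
Your proposal is correct and follows essentially the same route as the paper's proof: the reduction to $I=M(I\sigma(X_0^{-1/2}\mu X_0^{-1/2}))$ via congruence invariance, the comparison of the two push-forwards of $\mu$ through Lemma \ref{L-3.4}, the crux pointwise inequality $(X_0^{-1/2}A^rX_0^{-1/2})^{1/r}\ge X_0^{-1/2}AX_0^{-1/2}$ from Hansen's inequality for the operator concave function $x^{1/r}$ and the contraction $X_0^{-1/2}$ (the paper cites \cite{Ha} rather than \cite{HP}, an immaterial difference), and the final appeal to Theorem \ref{T-3.1}\,(2). One small correction: your proviso $\sigma\ne\frak{r}$ for the adjoint route to \eqref{F-6.16} is unnecessary, since $\frak{l}^*=\frak{l}$ guarantees $\sigma^*\ne\frak{l}$ whenever $\sigma\ne\frak{l}$; that adjoint reduction, using $(\sigma_{1/r})^*=(\sigma^*)_{1/r}$ and Proposition \ref{P-4.1}\,(3), is in fact exactly how the paper disposes of \eqref{F-6.16}, while your direct mirrored argument is an equally valid alternative.
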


\begin{proof}
Let $\mu\in\cPP$ and $r\ge1$. The assertion in \eqref{F-6.16} follows from \eqref{F-6.15} by
replacing $M$, $\sigma$ and $\mu$ in \eqref{F-6.15} with $M^*$, $\sigma^*$ and $\mu^{-1}$.
Indeed, note that $M^*$ satisfies (vi) too and $(\sigma^*)_{1/r}=(\sigma_{1/r})^*$ so that
$(M^*)_{(\sigma^*)_{1/r}}=(M_{\sigma_{1/r}})^*$ as well as $M^*_{\sigma^*}=(M_\sigma)^*$ by
Proposition \ref{P-4.1}\,(3). Hence \eqref{F-6.16} is equivalent to \eqref{F-6.15} for
$M^*$, $\sigma^*$ and $\mu^{-1}$. So we may prove \eqref{F-6.15} only. Assume that
$X_0:=M_\sigma(\mu)\ge I$; then $I=M(I\sigma(X_0^{-1/2}\mu X_0^{-1/2}))$. Using the same
notations as in the proof of Theorem \ref{T-6.7} we have
$I\sigma(X_0^{-1/2}\mu X_0^{-1/2})=\bigl(f_\sigma\circ\Gamma_{X_0^{-1/2}}\bigr)_*\mu$ and
$$
I\sigma_{1/r}(X_0^{-1/2}\mu^rX_0^{-1/2})
=\bigl(f_{\sigma}\circ\pi_{1/r}\circ\Gamma_{X_0^{-1/2}}\circ\pi_r\bigr)_*\mu.
$$
We have
\begin{align*}
\bigl(f_{\sigma}\circ\pi_{1/r}\circ\Gamma_{X_0^{-1/2}}\circ\pi_r\bigr)(A)
&=f_\sigma((X_0^{-1/2}A^rX_0^{-1/2})^{1/r}) \\
&\ge f_\sigma(X_0^{-1/2}AX_0^{-1/2}) \\
&=\bigl(f_\sigma\circ\Gamma_{X_0^{-1/2}}\bigr)(A),\qquad A\in\bP,
\end{align*}
where Hansen's inequality \cite{Ha} has been used for the above inequality since
$X_0^{-1/2}\le I$ and $0<1/r\le1$. Therefore, Lemma \ref{L-3.4} gives
$$
I\sigma(X_0^{-1/2}\mu X_0^{-1/2})\le I\sigma_{1/r}(X_0^{-1/2}\mu^r X_0^{-1/2}),
$$
so that
$$
I=M(I\sigma(X_0^{-1/2}\mu X_0^{-1/2}))\le M(I\sigma_{1/r}(X_0^{-1/2}\mu^r X_0^{-1/2})).
$$
This implies that $X_0\le M(X_0\sigma_{1/r}\mu^r)$ so that $X_0\le M_{\sigma_{1/r}}(\mu^r)$
by Theorem \ref{T-3.1}\,(2).
\end{proof}

\begin{remark}\label{R-6.12}\rm
Similarly to Remark \ref{R-6.8} the assertions \eqref{F-6.15} and \eqref{F-6.16} together
are equivalently stated as
\begin{align}\label{F-6.17}
\lambda_{\min}^{r-1}(M_\sigma(\mu))M_\sigma(\mu)
\le M_{\sigma_{1/r}}(\mu^r)\le\|M_\sigma(\mu)\|^{r-1}M_\sigma(\mu),
\qquad r\ge1.
\end{align}
This is the extension of \cite[Theorem 4.1]{HSW} from the
$n$-variable case to the probability measure case. Note that the
special case of \eqref{F-6.17} for $n$-variable power means was
first shown in \cite[Corollary 3.2]{LY} when $\dim\cH<\infty$.
\end{remark}

The next result is the complementary version of \eqref{F-6.17}, which is the extension of
\cite[Theorem 4.2]{HSW} from the $n$-variable case to the probability measure case.

\begin{thm}\label{T-6.13}
Let $M$ and $\sigma$ be as in Theorem \ref{T-6.11}. For every $\mu\in\cPP$,
\begin{align}\label{F-6.18}
\|M_{\sigma_r}(\mu)\|^{r-1}M_{\sigma_r}(\mu)\le M_\sigma(\mu^r)
\le\lambda_{\min}^{r-1}(M_{\sigma_r}(\mu))M_{\sigma_r}(\mu),
\qquad0<r\le1.
\end{align}
\end{thm}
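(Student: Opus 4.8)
The plan is to reduce \eqref{F-6.18} to a pair of conditional implications and then obtain the two-sided estimate by positive homogeneity, exactly as the two-sided \eqref{F-6.17} was extracted from \eqref{F-6.15}--\eqref{F-6.16} in Remark \ref{R-6.12}. Concretely, I first claim that for $0<r\le1$ and every $\mu\in\cPP$,
\begin{align*}
M_{\sigma_r}(\mu)\ge I\ &\implies\ M_\sigma(\mu^r)\le M_{\sigma_r}(\mu), \\
M_{\sigma_r}(\mu)\le I\ &\implies\ M_\sigma(\mu^r)\ge M_{\sigma_r}(\mu).
\end{align*}
Granting these, \eqref{F-6.18} follows by scaling: writing $\lambda:=\lambda_{\min}(M_{\sigma_r}(\mu))$, the measure $\lambda^{-1}.\mu$ satisfies $M_{\sigma_r}(\lambda^{-1}.\mu)=\lambda^{-1}M_{\sigma_r}(\mu)\ge I$, while $(\lambda^{-1}.\mu)^r=\lambda^{-r}.(\mu^r)$, so the first implication gives $\lambda^{-r}M_\sigma(\mu^r)\le\lambda^{-1}M_{\sigma_r}(\mu)$, i.e.\ the upper bound; scaling instead by $\|M_{\sigma_r}(\mu)\|$ and using the second implication gives the lower bound. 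Note that $\sigma_r$ is a genuine Kubo--Ando mean with $\sigma_r\ne\frak{l}$ whenever $\sigma\ne\frak{l}$ (its representing function $f_\sigma(x^r)$ is operator monotone, with derivative $rf_\sigma'(1)>0$ at $1$), so $M_{\sigma_r}$ is well defined by Theorem \ref{T-3.1}.

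Only the first implication needs a direct argument; the second follows from it by the adjoint symmetry of Proposition \ref{P-4.1}\,(3). Indeed, replacing $(M,\sigma,\mu)$ by $(M^*,\sigma^*,\mu^{-1})$ and using $(\sigma^*)_r=(\sigma_r)^*$, $(M_\sigma)^*=(M^*)_{\sigma^*}$ and $(\mu^{-1})^r=(\mu^r)^{-1}$ converts the hypothesis $M_{\sigma_r}(\mu)\ge I$ into $M_{\sigma_r}(\mu)\le I$ and reverses the conclusion. To prove the first implication I would mimic the proof of Theorem \ref{T-6.11}. Put $X_0:=M_{\sigma_r}(\mu)\ge I$; by congruence invariance (vi) the defining relation $X_0=M(X_0\sigma_r\mu)$ becomes $I=M(I\sigma_r(X_0^{-1/2}\mu X_0^{-1/2}))$. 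By Theorem \ref{T-3.1}\,(2) it suffices to show $X_0\ge M(X_0\sigma\mu^r)$, equivalently $I\ge M(I\sigma(X_0^{-1/2}\mu^r X_0^{-1/2}))$, and by monotonicity of $M$ this in turn follows once I establish the stochastic-order inequality
$$
I\sigma(X_0^{-1/2}\mu^r X_0^{-1/2})\le I\sigma_r(X_0^{-1/2}\mu X_0^{-1/2}).
$$

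By Lemma \ref{L-3.4} this reduces to the pointwise operator inequality $f_\sigma(X_0^{-1/2}A^rX_0^{-1/2})\le f_\sigma\bigl((X_0^{-1/2}AX_0^{-1/2})^r\bigr)$ for all $A\in\bP$, and since $f_\sigma$ is operator monotone it is implied by
$$
X_0^{-1/2}A^rX_0^{-1/2}\le(X_0^{-1/2}AX_0^{-1/2})^r.
$$
This is precisely Hansen's inequality \cite{Ha} for the operator concave function $x^r$ ($0<r\le1$) and the contraction $C:=X_0^{-1/2}$, which is a contraction exactly because $X_0\ge I$. The step to watch is the bookkeeping of inequality directions: Theorem \ref{T-6.11} used $X_0^{-1/2}\le I$ together with the exponent $1/r\le1$, whereas here the roles of $\sigma$ and $\sigma_r$ (and of $\mu$ and $\mu^r$) are interchanged, so one must verify that this single application of Hansen's inequality still lands on the correct side after composing with the operator monotone $f_\sigma$ and passing through Lemma \ref{L-3.4}. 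Once the first implication is secured, the remaining duality and scaling steps are routine and introduce no new ideas.
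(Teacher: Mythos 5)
Your proposal is correct and follows essentially the same route as the paper's proof: congruence invariance to pass to $I=M(I\sigma_r(X_0^{-1/2}\mu X_0^{-1/2}))$, Hansen's inequality composed with the operator monotone $f_\sigma$, Lemma \ref{L-3.4} to compare the push-forwards, Theorem \ref{T-3.1}\,(2) to conclude, and the adjoint duality $(M^*,\sigma^*,\mu^{-1})$ to get the companion inequality. The only difference is organizational: you first prove normalized implications (with hypothesis $M_{\sigma_r}(\mu)\ge I$ or $\le I$, where $X_0^{-1/2}$ itself is the Hansen contraction) and then rescale by positive homogeneity, whereas the paper proves \eqref{F-6.18} directly by taking $\lambda^{1/2}X_0^{-1/2}$ with $\lambda=\lambda_{\min}(X_0)$ as the contraction, absorbing the constant $\lambda^{1-r}$ into the same Hansen step.
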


\begin{proof}
As in the proof of Theorem \ref{T-6.11} the first and the second inequalities in
\eqref{F-6.18} are equivalent by replacing $M$, $\sigma$ and $\mu$ with $M^*$, $\sigma^*$
and $\mu^{-1}$, so we may prove the second only. Let $\mu\in\cPP$ and $0<r\le1$. Moreover,
let $X_0:=M_{\sigma_r}(\mu)$ and $\lambda:=\lambda_{\min}(X_0)$. We have
$I=M(I\sigma_r(X_0^{-1/2}\mu X_0^{-1/2}))$ and
$$
I\sigma_r(X_0^{-1/2}\mu X_0^{-1/2})
=\bigl(f_\sigma\circ\pi_r\circ\Gamma_{X_0^{-1/2}}\bigr)_*\mu.
$$
By Theorem \ref{T-3.1}\,(2) what we need to prove is that
$\lambda^{r-1}X_0\ge M((\lambda^{r-1}X_0)\sigma\mu^r)$, or equivalently,
$I\ge M(I\sigma(\lambda^{1-r}.(X_0^{-1/2}\mu^rX_0^{-1/2})))$. Note that
$$
I\sigma(\lambda^{1-r}.X_0^{-1/2}\mu^rX_0^{-1/2})
=\bigl(f_\sigma\circ m_{\lambda^{1-r}}\circ\Gamma_{X_0^{-1/2}}\circ\pi_r\bigr)_*\mu,
$$
where $m_{\lambda^{1-r}}(A):=\lambda^{1-r}A$. Since $\lambda^{1/2}X_0^{-1/2}\le I$, Hansen's
inequality \cite{Ha} gives
$$
\lambda^{1-r}X_0^{-1/2}A^rX_0^{-1/2}\le(X_0^{-1/2}AX_0^{-1/2})^r,
$$
which implies that
\begin{align*}
\bigl(f_\sigma\circ m_{\lambda^{1-r}}\circ\Gamma_{X_0^{-1/2}}\circ\pi_r\bigr)(A)
&=f_\sigma(\lambda^{1-r}X_0^{-1/2}A^rX_0^{-1/2}) \\
&\le f_\sigma((X_0^{-1/2}AX_0^{-1/2})^r) \\
&=\bigl(f_\sigma\circ\pi_r\circ\Gamma_{X_0^{-1/2}}\bigr)(A),\qquad A\in\bP.
\end{align*}
Therefore, by Lemma \ref{L-3.4} we have
$$
I=M(I\sigma_r(X_0^{-1/2}\mu X_0^{-1/2}))
\ge M(I\sigma(\lambda^{1-r}.(X_0^{-1/2}\mu^rX_0^{-1/2}))),
$$
as required.
\end{proof}

\begin{remark}\label{R-6.14}\rm
Similarly to the case $\dim\cH<\infty$ in \cite[Theorem 3.4]{HL2}, it is easy to verify that
the Karcher mean $G$ satisfies $G(\mu)=G_{\#_\alpha}(\mu)$ for all $\mu\in\cPP$ and any
$\alpha\in(0,1]$. Since $(\#_\alpha)_r=\#_{\alpha r}$ for all $r\in(0,1]$, either Theorem
\ref{T-6.11} or \ref{T-6.13} gives Ando-Hiai's inequality for $G$, in addition to an
demonstration in Remark \ref{R-6.10}.
\end{remark}

\section{Further applications}

In this section we present more applications of our method to different inequalities.
Throughout the section, for some theoretical and technical reasons, we assume that $\cH$ is
\emph{finite-dimensional}.

\subsection{Norm inequality}

For every unitarily invariant norm $|||\cdot|||$ and any two-variable operator mean $\sigma$,
it is well-known (see, e.g., \cite[(3.13)]{An2}) that
\begin{align}\label{F-7.1}
|||A\sigma B|||\le|||A|||\sigma|||B|||,\qquad A,B\in B(\cH)^+.
\end{align}
In fact, this norm inequality can be extended to more general norms $|||\cdot|||$ on
$B(\cH)$ that is \emph{monotone} in the sense that if $A\ge B\ge0$ implies
$|||A|||\ge|||B|||$. There are many examples of monotone norms on $B(\cH)$ that are not
unitarily invariant; for example, the numerical radius and $|||A|||=\|A\|+|\tr\,A|$,
where $\tr\,A$ is the trace of $A$, are such cases. (These are examples of
\emph{weakly unitarily invariant} norms introduced in \cite{BhHo}.)
Although the extension of \eqref{F-7.1} to monotone norms may be folklore to experts,
there seems no literature, so we prove it as a lemma.

\begin{lemma}\label{L-7.1}
If $|||\cdot|||$ is a monotone norm on $B(\cH)$, then inequality \eqref{F-7.1} holds for
every two-variable operator mean $\sigma$.
\end{lemma}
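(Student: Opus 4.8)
The plan is to reduce the norm inequality to a single \emph{affine majorization} of $A\sigma B$ coming from the concavity of the representing function, and then apply monotonicity of the norm together with the triangle inequality. First I would dispose of degeneracies: since $\cH$ is finite-dimensional, both the operator mean (through the extension $A\sigma B=\lim_{\eps\searrow0}(A+\eps I)\sigma(B+\eps I)$ in SOT, which here is norm convergence) and the norm $|||\cdot|||$ are continuous, so it suffices to prove \eqref{F-7.1} for invertible $A,B\in\bP$; the general case in $B(\cH)^+$ then follows by passing to the limit. I would also set $s:=|||A|||$ and $t:=|||B|||$ and treat the case $s,t>0$, the cases $s=0$ or $t=0$ (forcing $A=0$ or $B=0$) being handled directly from $0\sigma B=(\lim_{x\to\infty}f_\sigma(x)/x)\,B$.

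The key step uses that the representing function $f_\sigma$ is non-negative, analytic, and (ordinary) concave on $(0,\infty)$, the last because operator monotone functions on $(0,\infty)$ are concave. Put $x_0:=t/s$ and take the tangent line of $f_\sigma$ at $x_0$, setting $\alpha:=f_\sigma(x_0)-x_0 f_\sigma'(x_0)$ and $\beta:=f_\sigma'(x_0)$. Concavity gives the scalar inequality $f_\sigma(x)\le\alpha+\beta x$ for all $x>0$, and applying the functional calculus to the positive operator $A^{-1/2}BA^{-1/2}$ yields $f_\sigma(A^{-1/2}BA^{-1/2})\le\alpha I+\beta A^{-1/2}BA^{-1/2}$. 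Conjugating by $A^{1/2}$ turns this into the operator inequality
$$
A\sigma B=A^{1/2}f_\sigma(A^{-1/2}BA^{-1/2})A^{1/2}\le\alpha A+\beta B .
$$
Since $\beta=f_\sigma'(x_0)\ge0$ (as $f_\sigma$ is increasing) and $\alpha$ equals the value at $0$ of the tangent line, which by concavity is $\ge f_\sigma(0^+)\ge0$, both coefficients are non-negative, so $\alpha A+\beta B\ge A\sigma B\ge0$. Then monotonicity of $|||\cdot|||$ and the triangle inequality give
$$
|||A\sigma B|||\le|||\alpha A+\beta B|||\le\alpha|||A|||+\beta|||B|||=\alpha s+\beta t .
$$
Finally I would compute $\alpha s+\beta t=s f_\sigma(x_0)-t f_\sigma'(x_0)+t f_\sigma'(x_0)=s f_\sigma(t/s)=s\sigma t=|||A|||\,\sigma\,|||B|||$, where $s\sigma t$ denotes the scalar mean $sf_\sigma(t/s)$; the choice $x_0=t/s$ is exactly what makes this bound tight.

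The main obstacle is not any deep estimate but the bookkeeping that makes the tangent-line bound collapse to precisely $s\sigma t$: one must check that the intercept $\alpha$ is non-negative (so that monotonicity of the norm and the triangle inequality may both be invoked on $\alpha A+\beta B$), and that the evaluation point $x_0=t/s$ is the correct one. Here the non-negativity of $f_\sigma$ is essential for $\alpha\ge0$, and the reduction to invertible operators via continuity is what allows the functional calculus at $A^{-1/2}BA^{-1/2}$ in the first place. Everything else is the standard passage from a scalar inequality on the spectrum to an operator inequality and then to the monotone norm.
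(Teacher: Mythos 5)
Your proof is correct, but it takes a genuinely different route from the paper's. The paper first establishes the inequality for the parallel sum $A:B=(A^{-1}+B^{-1})^{-1}$, via the bound $A:B\le t^2A+(1-t)^2B$ for all $t\in\bR$ followed by minimization over $t$ to get $|||A:B|||\le|||A|||:|||B|||$, and then transfers this to an arbitrary Kubo--Ando mean $\sigma$ through the integral representation \eqref{F-7.2}, approximating the integral so as to apply the parallel-sum case term by term. You instead exploit ordinary concavity of the representing function $f_\sigma$ (valid because a positive operator monotone function on $(0,\infty)$ is operator concave, hence concave) and take the supporting line at the single, well-chosen point $x_0=|||B|||/|||A|||$; this produces one affine majorization $A\sigma B\le\alpha A+\beta B$ with $\alpha,\beta\ge0$, after which monotonicity of the norm and the triangle inequality collapse exactly to $|||A|||\,\sigma\,|||B|||$. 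Your nonnegativity checks ($\beta=f_\sigma'(x_0)\ge0$ since $f_\sigma$ is increasing; $\alpha\ge f_\sigma(0^+)\ge0$ since the tangent line of a concave function dominates it) and the final computation $\alpha s+\beta t=sf_\sigma(t/s)$ are all in order, and both proofs share the same continuity reduction to invertible $A,B$. What your approach buys is economy and self-containedness: no integral representation, no approximation argument, and the choice of $x_0$ makes the bound tight in one stroke. What the paper's route buys is a reusable template: the same parallel-sum-plus-integral-representation scheme is recycled in Appendix B.1 to prove the eigenvalue weak majorization \eqref{F-7.14}, where the right-hand side is a sum of scalar means over the spectrum and a single tangent line cannot produce the required bound.
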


\begin{proof}
By continuity we may assume that $A,B\in B(\cH)^+$ are invertible. Since
$$
(I+A^{1/2}B^{-1}A^{1/2})^{-1}\le t^2I+(1-t)^2A^{-1/2}BA^{-1/2},
$$
we have $A:B\le t^2 A+(1-t)^2B$ for all $t\in\bR$, where $A:B:=(A^{-1}+B^{-1})^{-1}$, the
\emph{parallel sum} of $A,B$. Therefore,
$$
|||A:B|||\le t^2|||A|||+(1-t)^2|||B|||.
$$
Minimizing the right-hand side above gives $|||A:B|||\le|||A|||:|||B|||$. Recall the integral
expression \cite{KA}
\begin{align}\label{F-7.2}
A\sigma B=aA+bB+\int_{(0,\infty)}{1+t\over t}\{(tA):B\}\,dm(t)
\end{align}
with a probability measure $m$ on $[0,\infty]$ where $a:=m(\{0\})$, $b:=m(\{\infty\})$. From
this integral expression we have inequality \eqref{F-7.1} as
\begin{align*}
|||A\sigma B|||&\le a|||A|||+b|||B|||
+\int_{(0,\infty)}{1+t\over t}\{(t|||A|||):|||B|||\}\,dm(t) \\
&=|||A|||\sigma|||B|||.
\end{align*}
\end{proof}

The following is the extension of \eqref{F-7.1} to derived operator means $M\in\fM$.

\begin{prop}\label{P-7.2}
For every derived operator mean $M\in\fM$ and every monotone norm $|||\cdot|||$ on $B(\cH)$,
\begin{align}\label{F-7.3}
|||M(\mu)|||\le M(|||\cdot|||_*\mu),\qquad\mu\in\cPP,
\end{align}
where $|||\cdot|||_*\mu$ is the push-forward of $\mu$ by $A\in\bP\mapsto|||A|||\in(0,\infty)$
(so $|||\cdot|||_*\mu$ is a Borel probability measure on $(0,\infty)$).
\end{prop}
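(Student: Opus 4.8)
The plan is to prove \eqref{F-7.3} by the same generation-and-preservation scheme used for the positive-linear-map inequality in Theorem \ref{T-6.5}: first verify it for the generators $\cA,\cH,G$ of $\fM$, then show it is inherited under procedures (A) and (B), so that it follows for all $M\in\fM$ by induction on the construction of $M$. Throughout, the right-hand side is read via the \emph{scalar} version of $M$: identifying $\bP(\bC)$ with $(0,\infty)$, the measure $|||\cdot|||_*\mu$ lies in $\cP^\infty(\bP(\bC))$ (its support is bounded and bounded away from $0$, since $\mu$ is carried by some $\Sigma_\eps$ and $|||\cdot|||$ is monotone), and $M(|||\cdot|||_*\mu)$ denotes the common derived operator mean of Proposition \ref{P-6.2} evaluated on $\bC$. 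The generator $\cA$ is immediate from the triangle inequality for Bochner integrals, namely $|||\cA(\mu)|||=|||\int_\bP A\,d\mu(A)|||\le\int_\bP|||A|||\,d\mu(A)=\cA(|||\cdot|||_*\mu)$.

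The crucial step, and the one I expect to carry the whole argument, is preservation under the deformation (A). Assume $M$ satisfies \eqref{F-7.3}, fix $\mu\in\cPP$, and put $X_0:=M_\sigma(\mu)$, $s_0:=|||X_0|||$, and $\nu:=|||\cdot|||_*\mu$. Starting from $X_0=M(X_0\sigma\mu)$ and applying \eqref{F-7.3} for $M$ gives $s_0\le M\bigl(|||\cdot|||_*(X_0\sigma\mu)\bigr)$. I would then compare the two maps into $(0,\infty)$ given by $A\mapsto|||X_0\sigma A|||$ and $A\mapsto s_0\,\sigma\,|||A|||$; both are monotone (by monotonicity of $\sigma$ together with monotonicity of the norm), and by Lemma \ref{L-7.1} the first is dominated pointwise by the second. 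Hence Lemma \ref{L-3.4} (applied, as in Theorem \ref{T-6.5}, across the two ordered spaces $\bP(\cH)$ and $(0,\infty)=\bP(\bC)$) yields $|||\cdot|||_*(X_0\sigma\mu)\le s_0\,\sigma\,\nu$ in the stochastic order. Monotonicity of $M$ then gives $s_0\le M(s_0\,\sigma\,\nu)$, and the order property of Theorem \ref{T-3.1}\,(2) forces $s_0\le M_\sigma(\nu)$, which is precisely \eqref{F-7.3} for $M_\sigma$.

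With (A) established, the remaining generators follow by the limiting devices of Section 6.2, where the standing finite-dimensionality of $\cH$ is used to identify SOT-convergence with norm convergence and to invoke continuity of the norm $|||\cdot|||$. Applying (A) to $\cA$ with $\sigma=\,!_s$ and with $\sigma=\#_r$ shows $\cA_{\,!_s}$ and $P_r$ satisfy \eqref{F-7.3}; letting $s\searrow0$ (Proposition \ref{P-5.1}) and $r\searrow0$ (Proposition \ref{P-5.2}) and passing to the limit on both sides then gives \eqref{F-7.3} for $\cH$ and for $G$. Finally, preservation under composition (B) is routine: if $M_0,\dots,M_n$ satisfy \eqref{F-7.3} and $M(\mu)=M_0\bigl(\sum_jw_j\delta_{M_j(\mu)}\bigr)$, then $|||M(\mu)|||\le M_0\bigl(\sum_jw_j\delta_{|||M_j(\mu)|||}\bigr)$, and since $|||M_j(\mu)|||\le M_j(\nu)$ gives $\sum_jw_j\delta_{|||M_j(\mu)|||}\le\sum_jw_j\delta_{M_j(\nu)}$ in the stochastic order on $(0,\infty)$, monotonicity of $M_0$ yields $|||M(\mu)|||\le M(\nu)$. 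As $\fM$ is generated by $\cA,\cH,G$ under (A) and (B), this will complete the proof.
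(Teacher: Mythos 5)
Your proposal is correct and follows essentially the same route as the paper: the key deformation step (Lemma \ref{L-7.1} for the pointwise domination $|||X_0\sigma A|||\le|||X_0|||\sigma|||A|||$, Lemma \ref{L-3.4} for the stochastic-order comparison of push-forwards, then Theorem \ref{T-3.1}\,(2) applied in the one-dimensional case) is exactly the paper's argument, and your treatment of the generators ($\cA$ by the Bochner triangle inequality, $\cH$ and $G$ via $\cA_{\,!_s}$ and $P_r=\cA_{\#_r}$ with the limits of Propositions \ref{P-5.1} and \ref{P-5.2}) and of procedure (B) matches the part the paper delegates to the proof of Theorem \ref{T-6.5}.
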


\begin{proof}
We show that \eqref{F-7.3} is preserved under procedure (A). Assume that $M\in\fM$ satisfies
\eqref{F-7.3} and $\sigma$ is a two-variable operator mean with $\sigma\ne\frak{l}$. For
every $\mu\in\cPP$ let $X_0:=M_\sigma(\mu)$. Then
\begin{align}\label{F-7.4}
|||X_0|||=|||M(X_0\sigma\mu)|||\le M(|||\cdot|||_*(X_0\sigma\mu)).
\end{align}
Note that
$$
|||\cdot|||_*(X_0\sigma\mu)=(|||\cdot|||\circ\psi_{X_0})_*\mu,\qquad
|||X_0|||\sigma(|||\cdot|||_*\mu)=(\psi_{|||X_0|||}\circ|||\cdot|||)_*\mu,
$$
where $\psi_{X_0}$ is as in the proof of Theorem \ref{T-6.5} and $\psi_{|||X_0|||}$ is
similarly defined on $(0,\infty)$. It follows from Lemma \ref{L-7.1} that
$$
(|||\cdot|||\circ\psi_{X_0})(A)=|||X_0\sigma A|||\le|||X_0|||\sigma|||A|||
=(\psi_{|||X_0|||}\circ|||\cdot|||)(A)
$$
for all $A\in\bP$, so that $|||\cdot|||_*(X_0\sigma\mu)\le|||X_0|||\sigma(|||\cdot|||_*\mu)$
by Lemma \ref{L-3.4}. From this and \eqref{F-7.4} one has
$$
|||X_0|||\le M(|||X_0|||\sigma(|||\cdot|||_*\mu)).
$$
By Theorem \ref{T-3.1}\,(2) (for the case $\dim\cH=1$) this implies that
$|||X_0|||\le M_\sigma(|||\cdot|||_*\mu)$. The remaining proof is similar to the second and
the third paragraphs of the proof of Theorem \ref{T-6.5}, so the details may be left to the
reader.
\end{proof}

In particular, for the Karcher mean $G$ and for any monotone norm $|||\cdot|||$ on $B(\cH)$,
we have
\begin{align}\label{F-7.5}
|||G(\mu)|||\le G(|||\cdot|||_*\mu)=\exp\int_\bP\log|||A|||\,d\mu(A),
\qquad\mu\in\cPP,
\end{align}
as verified in \cite{Li}, where the last equality is readily seen.

\subsection{Eigenvalue majorizations}

Assume that $N=\dim\cH$. Let us first recall the notion of majorization. Let
$a=(a_1,\dots,a_N)$, $b=(b_1,\dots,b_N)\in(\bR^+)^N$. We write
$a^\downarrow=(a_1^\downarrow,\dots,a_N^\downarrow)$ for the decreasing rearrangement of $a$
and $a^\uparrow=(a_1^\uparrow,\dots,a_N^\uparrow)$ for its increasing counterpart. The
\emph{weak majorization} (or \emph{submajorization}) $a\prec_wb$ means that
$$
\sum_{i=1}^ka_i^\downarrow\le\sum_{i=1}^kb_i^\downarrow,\qquad1\le k\le N.
$$
We call the \emph{majorization} $a\prec b$ if $a\prec_wb$ and equality holds for $k=N$ above.
The \emph{log-majorization} $a\prec_{\log}b$ is defined as
$$
\prod_{i=1}^ka_i^\downarrow\le\prod_{i=1}^kb_i^\downarrow,\quad1\le k\le N-1,
\quad\mbox{and}\quad
\prod_{i=1}^Na_i^\downarrow=\prod_{i=1}^Nb_i^\downarrow.
$$
Note that $a\prec_{\log}b$ $\implies$ $a\prec_wb$.

Let $A,B\in\bP$ and $\lambda(A)=(\lambda_1(A),\dots,\lambda_N(A))$ be the eigenvalues of $A$
in decreasing order counting multiplicities. In the following, we identify $\lambda(A)$ in
$(0,\infty)^N$ with a diagonal matrix in $\bP$ with $\lambda_1(A),\dots\lambda_N(A)$ on its
diagonal. We write $A\prec_wB$, $A\prec B$ and $A\prec_{\log}B$ if each majorization
for $\lambda(A)$ and $\lambda(B)$ holds, respectively. It is well-known that if $A\prec_wB$
then $|||f(A)|||\le|||f(B)|||$ for every unitarily invariant norm $|||\cdot|||$ and every
non-negative and non-decreasing convex function $f$ on $(0,\infty)$. See, e.g.,
\cite{An2,Hi,MOA} for more about majorizations for matrices.

The \emph{Log-Euclidean mean} of $\mu\in\cPP$ is given as
$$
LE(\mu):=\exp\int_\bP\log A\,d\mu(A).
$$
Recall the \emph{Lie-Trotter formula} given in \cite[Theorem 5.7]{HL}
\begin{align}\label{F-7.6}
\lim_{r\searrow0}G(\mu^r)^{1/r}=LE(\mu).
\end{align}
For $\mu\in\cPP$ let $\lambda_*\mu$ denote the push-forward of $\mu$ by the continuous map
$A\in\bP\mapsto\lambda(A)\in(0,\infty)^N$, that is,
$$
\lambda_*\mu=((\lambda_1)_*\mu,\dots,(\lambda_N)_*\mu),
$$
where $(\lambda_i)_*\mu$ is the push-forward of $\mu$ by
$A\in\bP\mapsto\lambda_i(A)\in(0,\infty)$. It is readily seen that
\begin{align}\label{F-7.7}
G(\lambda_*\mu)=LE(\lambda_*\mu)
=\Bigl(\exp\int_\bP\log\lambda_i(A)\,d\mu(A)\Bigr)_{i=1}^N.
\end{align}

\begin{prop}\label{P-7.3}
For every $\mu\in\cPP$ and every $r\in(0,1)$,
$$
G(\mu)\prec_{\log}G(\mu^r)^{1/r}\prec_{\log}LE(\mu)\prec_{\log}G(\lambda_*\mu).
$$
\end{prop}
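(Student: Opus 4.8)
The plan is to reduce each of the three log-majorizations to a scalar or operator inequality by means of antisymmetric tensor powers, exploiting that $G$ lies in the Ando--Hiai classes $\fM_0^\pm$. Write $\wedge^k A$ for the $k$-fold antisymmetric tensor power of $A\in\bP$, acting on the (again finite-dimensional) space $\wedge^k\cH$. The two facts I would use repeatedly are that $\wedge^k$ is a multiplicative $*$-homomorphism with $\log\wedge^k A=D_k(\log A)$ for a fixed linear map $D_k$ on self-adjoints (the derived representation), and that $\|\wedge^k A\|=\lambda_1(\wedge^k A)=\prod_{i=1}^k\lambda_i(A)$. Applying the linear map $D_k$ (which commutes with Bochner integration) to the Karcher equation $\int_\bP\log\bigl(X^{-1/2}AX^{-1/2}\bigr)\,d\mu(A)=0$ and using $\wedge^k(X^{-1/2}AX^{-1/2})=(\wedge^kX)^{-1/2}(\wedge^kA)(\wedge^kX)^{-1/2}$, I obtain by uniqueness in Theorem \ref{T-3.1} the compatibility
$$
\wedge^k G(\mu)=G(\wedge^k_*\mu),\qquad\wedge^k_*(\mu^r)=(\wedge^k_*\mu)^r,
$$
where $\wedge^k_*\mu\in\cPP$ is the push-forward of $\mu$ by $A\mapsto\wedge^k A$. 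Establishing this compatibility is the technical heart of the argument; everything else is a translation of eigenvalue products into operator norms. I would also record once that all four matrices in the chain share the determinant $\exp\int_\bP\log\det A\,d\mu(A)$ (for $G$ by taking $\det$ in the Karcher equation, for $LE$ and $G(\lambda_*\mu)$ immediately), so that equality at $k=N$ holds throughout and each $\prec_{\log}$ is genuine.

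For the first inequality I would fix $k$, set $\nu:=\wedge^k_*\mu$, and note that the compatibility gives $\prod_{i=1}^k\lambda_i(G(\mu))=\|G(\nu)\|$ and $\prod_{i=1}^k\lambda_i\bigl(G(\mu^r)^{1/r}\bigr)=\|G(\nu^r)\|^{1/r}$, so it suffices to prove $\|G(\nu)\|\le\|G(\nu^r)\|^{1/r}$. Here I would normalize by positive homogeneity (ii): replacing $\nu$ by $\beta.\nu$ scales both sides by $\beta$, so I may assume $\|G(\nu^r)\|=1$, i.e.\ $G(\nu^r)\le I$. Since $G\in\fM_0^-$ and $1/r\ge1$, the Ando--Hiai inequality \eqref{F-6.10} of Theorem \ref{T-6.7} applied to $\nu^r$ yields $G\bigl((\nu^r)^{1/r}\bigr)=G(\nu)\le I$, that is $\|G(\nu)\|\le1$, which is exactly the required bound after undoing the normalization.

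For the second inequality I would first upgrade the first one to the monotonicity that, for $0<s<r\le1$, $G(\mu^r)^{1/r}\prec_{\log}G(\mu^s)^{1/s}$: applying the first inequality to $\mu^r$ with exponent $s/r\in(0,1)$ gives $G(\mu^r)\prec_{\log}G(\mu^s)^{r/s}$, and raising to the power $1/r$ (which preserves $\prec_{\log}$) yields the claim. Letting $s\searrow0$ and invoking the Lie--Trotter formula \eqref{F-7.6}, together with the closedness of $\prec_{\log}$ under convergence of the eigenvalues, gives $G(\mu^r)^{1/r}\prec_{\log}LE(\mu)$. The third inequality is then a matter of convexity: writing $H:=\int_\bP\log A\,d\mu(A)$, so $LE(\mu)=e^H$ and $\prod_{i=1}^k\lambda_i(LE(\mu))=\exp\sum_{i=1}^k\lambda_i(H)$, I would use that the Ky Fan functional $\Lambda_k(X):=\sum_{i=1}^k\lambda_i(X)$ is convex on self-adjoints, whence by Jensen's inequality
$$
\sum_{i=1}^k\lambda_i(H)=\Lambda_k\Bigl(\int_\bP\log A\,d\mu(A)\Bigr)
\le\int_\bP\Lambda_k(\log A)\,d\mu(A)=\int_\bP\log\prod_{i=1}^k\lambda_i(A)\,d\mu(A),
$$
and exponentiating, comparing with the explicit formula \eqref{F-7.7} for $G(\lambda_*\mu)$, gives $LE(\mu)\prec_{\log}G(\lambda_*\mu)$. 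The point demanding the most care is that both Theorem \ref{T-6.7} and the Karcher mean are being used on the spaces $\wedge^k\cH$ rather than $\cH$; this is legitimate because $G$ is one of the base means generating $\fM_0^-$ on every finite-dimensional Hilbert space, and I expect the transfer to $\wedge^k\cH$ (Proposition \ref{P-6.2}) to be the main place where consistency of the construction across Hilbert spaces is invoked.
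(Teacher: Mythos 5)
Your proof is correct, but it takes a genuinely different route from the paper's, essentially swapping where the antisymmetric-tensor machinery does its work. For the first two log-majorizations the paper simply cites \cite[Theorem 4.4]{HL} (monotonicity of $r\mapsto G(\mu^r)^{1/r}$ in the $\prec_{\log}$ order) and then applies the Lie--Trotter formula \eqref{F-7.6}; you instead re-derive them from the paper's own Ando--Hiai theorem (Theorem \ref{T-6.7} with $M=G\in\fM_0^-$), using the compatibility $\wedge^kG(\mu)=G((\wedge^k)_*\mu)$ together with the normalization $\|G(\nu^r)\|=1$ allowed by positive homogeneity, which makes these steps self-contained up to \eqref{F-7.6}. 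Conversely, the last link is where the paper deploys the tensors: it applies Proposition \ref{P-7.2} with the monotone norm $\lambda_1(\cdot)$ and the same compatibility (cited there from \cite[Theorem 4.2]{HL}) to obtain $G(\mu)\prec_{\log}G(\lambda_*\mu)$, i.e.\ \eqref{F-7.9}, and then substitutes $\mu^r$ for $\mu$; your Jensen/Ky Fan convexity argument applied to $H=\int_\bP\log A\,d\mu(A)$ gives $LE(\mu)\prec_{\log}G(\lambda_*\mu)$ in one stroke, with no tensors and no limiting step for that link. The trade-off: your version is independent of \cite{HL} except for the Lie--Trotter formula, while the paper's is shorter and records the intermediate statement \eqref{F-7.9}, which is of some independent interest. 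One attribution in your argument should be corrected: the uniqueness underlying $\wedge^kG(\mu)=G((\wedge^k)_*\mu)$ is uniqueness of solutions of the Karcher equation itself, which in this paper is imported from \cite{Pa} (see Section 5.3), not from Theorem \ref{T-3.1} --- the Karcher equation is not of the fixed-point form $X=M(X\sigma\mu)$ for any Kubo--Ando mean $\sigma$, since $\log x$ cannot be written as $(f_\sigma(x)-1)/f_\sigma'(1)$ with $f_\sigma$ positive on $(0,\infty)$, so Theorem \ref{T-3.1} does not literally cover it. This is a citation issue, not a gap: the uniqueness you need is available in the paper's framework, and with that reference repaired your argument goes through.
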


\begin{proof}
It was proved in \cite[Theorem 4.4]{HL} that
$$
G(\mu)\prec_{\log}G(\mu^r)^{1/r}\prec_{\log}G(\mu^{r'})^{1/r'}
\quad\mbox{for $0<r'<r<1$}.
$$
Combining with \eqref{F-7.6} we have
$$
G(\mu)\prec_{\log}G(\mu^r)^{1/r}\prec_{\log}LE(\mu)
\quad\mbox{for $0<r<1$}.
$$
It remains to prove that $G(\mu^r)^{1/r}\prec_{\log}G(\lambda_*\mu)$ for any $r\in(0,1)$.
Apply Proposition \ref{P-7.2} to $M=G$ and $\|\cdot\|=\lambda_1(\cdot)$; then we have
\begin{align}\label{F-7.8}
\lambda_1(G(\mu))\le G((\lambda_1)_*\mu).
\end{align}
Let $(\wedge^k)_*\mu$ be the push-forward of $\mu$ by the antisymmetric tensor power map
$A\in\bP(\cH)\mapsto\wedge^kA\in\bP(\wedge^k\cH)$, see \cite{Bh,MOA}. For each $k=1,\dots,N$, use
\cite[Theorem 4.2]{HL} and apply \eqref{F-7.8} to $\wedge^k\mu$ to obtain
\begin{align*}
\prod_{i=1}^k\lambda_i(G(\mu))
&=\lambda_1(\wedge^kG(\mu))=\lambda_1(G((\wedge^k)_*\mu)) \\
&\le G((\lambda_1)_*((\wedge^k)_*\mu))
=G\bigg(\Bigl(\prod_{i=1}^k\lambda_i\Bigr)_*\mu\biggr) \\
&=\exp\int_\bP\log\prod_{i=1}^k\lambda_i(A)\,d\mu(A)
=\prod_{i=1}^kG((\lambda_i)_*\mu).
\end{align*}
Moreover,
$$
\prod_{i=1}^N\lambda_i(G(\mu))=\det G(\mu)=\exp\int_\bR\log\det A\,d\mu(A)
=\prod_{i=1}^NG((\lambda_i)_*\mu).
$$
We therefore have
\begin{align}\label{F-7.9}
G(\mu)\prec_{\log}G(\lambda_*\mu).
\end{align}
By replacing $\mu$ with $\mu^r$ for $0<r<1$ we have
$$
G(\mu^r)^{1/r}\prec_{\log}G(\lambda_*(\mu^r))^{1/r}=G(\lambda_*\mu),
$$
where the last equality immediately follows from \eqref{F-7.7}.
\end{proof}

\begin{cor}
Assume that $f$ is a non-negative and non-decreasing function $f$ on $(0,\infty)$ such that
$f(e^x)$ is convex on $\bR$. For every $\mu\in\cPP$ and any unitarily invariant norm
$|||\cdot|||$,
\begin{align}\label{F-7.10}
|||f(G(\mu))|||\le|||f(LE(\mu))|||\le|||f(G(\lambda_*\mu))|||.
\end{align}
In particular,
\begin{align}\label{F-7.11}
|||G(\mu)|||\le|||LE(\mu)|||\le|||G(\lambda_*\mu)|||\le\exp\int_\bP\log|||A|||\,d\mu(A).
\end{align}
\end{cor}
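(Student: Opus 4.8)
The plan is to derive the corollary directly from Proposition \ref{P-7.3} together with the two standard majorization facts recalled just before it. The first claim is the chain \eqref{F-7.10} of unitarily invariant norm inequalities, and the second claim \eqref{F-7.11} is the special case $f=\mathrm{id}$ with an extra final bound.

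First I would establish \eqref{F-7.10}. By Proposition \ref{P-7.3} we have the log-majorizations
$$
G(\mu)\prec_{\log}LE(\mu)\prec_{\log}G(\lambda_*\mu),
$$
and since $\prec_{\log}$ implies $\prec_w$ (as noted in the excerpt), these upgrade to weak majorizations
$$
G(\mu)\prec_wLE(\mu)\prec_wG(\lambda_*\mu).
$$
Now I invoke the well-known principle recalled in the excerpt: if $A\prec_wB$ then $|||g(A)|||\le|||g(B)|||$ for every unitarily invariant norm and every non-negative non-decreasing convex $g$ on $(0,\infty)$. The hypothesis on $f$ is exactly that $f$ is non-negative, non-decreasing, and $f(e^x)$ is convex in $x$. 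The subtle point here—and the main obstacle—is that $f$ itself need not be convex; rather $t\mapsto f(e^t)$ is convex, which is the correct hypothesis to pass from a log-majorization. So I would not apply the $\prec_w$ principle to $f$ directly, but instead pass through logarithms: writing $\log\lambda(G(\mu))\prec\log\lambda(LE(\mu))\prec\log\lambda(G(\lambda_*\mu))$ (ordinary majorization of the log-eigenvalue vectors, which is what $\prec_{\log}$ unpacks to, using that the top-$(N{-}1)$ partial sums are ordered and the full sums are equal), and then applying the standard fact that a convex function sends a majorization to a weak majorization. Concretely, $g:=f\circ\exp$ is non-decreasing (as $f$ is non-decreasing) and convex, so from the majorization of the log-eigenvalue vectors one gets the weak majorization of $\bigl(g(\log\lambda_i(\cdot))\bigr)_i=\bigl(f(\lambda_i(\cdot))\bigr)_i$, i.e. $f(G(\mu))\prec_wf(LE(\mu))\prec_wf(G(\lambda_*\mu))$, which yields \eqref{F-7.10} for every unitarily invariant norm.

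Next I would deduce \eqref{F-7.11}. The first two inequalities are the case $f(t)=t$ of \eqref{F-7.10}, since $f(e^x)=e^x$ is convex and $f$ is non-negative and non-decreasing. For the final bound, note that $G(\lambda_*\mu)$ is the diagonal matrix with entries $\exp\int_\bP\log\lambda_i(A)\,d\mu(A)$ by \eqref{F-7.7}, so its largest entry is $\exp\int_\bP\log\lambda_1(A)\,d\mu(A)\le\exp\int_\bP\log|||A|||\,d\mu(A)$, using $\lambda_1(A)=\|A\|\le|||A|||$ up to the normalization of the unitarily invariant norm at rank-one projections; more cleanly, I would apply \eqref{F-7.5} to $G(\lambda_*\mu)$, or directly observe $|||G(\lambda_*\mu)|||\le|||\,(\exp\int\log|||A|||\,d\mu)\,I\,|||$ and compute. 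The cleanest route is simply to quote \eqref{F-7.5}, which already states $|||G(\mu)|||\le\exp\int_\bP\log|||A|||\,d\mu(A)$, applied after the chain $|||G(\mu)|||\le|||G(\lambda_*\mu)|||$; I would reconcile the two to land the stated right-hand side. I expect the only genuinely delicate step to be the passage from $\prec_{\log}$ to the norm inequality for the non-linear $f$, i.e. correctly using the convexity of $f\circ\exp$ rather than of $f$, and the routine verification that the normalization of unitarily invariant norms makes the scalar bound in \eqref{F-7.11} come out exactly as written.
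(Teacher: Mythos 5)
Your proof of \eqref{F-7.10} is correct and is in substance the same as the paper's, only more self-contained: the paper disposes of this half in one line by citing \cite[Proposition 4.1.6]{Hi}, which is precisely the standard fact you re-derive (unpack $\prec_{\log}$ into ordinary majorization of the log-eigenvalue vectors, push it through the convex non-decreasing function $g=f\circ\exp$ to get $f(G(\mu))\prec_w f(LE(\mu))\prec_w f(G(\lambda_*\mu))$, then invoke Ky Fan dominance). Your insistence on using convexity of $f\circ\exp$ rather than of $f$ is exactly the right point, and the argument goes through.

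The soft spot is the last inequality of \eqref{F-7.11}, where your treatment hedges among three routes, two of which do not work. The ``largest entry'' route only bounds the operator norm $\|G(\lambda_*\mu)\|$, which does not control $|||G(\lambda_*\mu)|||$ for a general unitarily invariant norm (think of the trace norm); likewise the bound via $|||(\exp\int\log|||A|||\,d\mu)\,I|||$ produces an extra factor $|||I|||$, and the inequality $\|A\|\le|||A|||$ requires a normalization that is nowhere assumed. Quoting \eqref{F-7.5} for $\mu$ itself bounds $|||G(\mu)|||$, not $|||G(\lambda_*\mu)|||$, so it cannot close the chain either. The one route you gesture at that does work --- applying \eqref{F-7.5} (equivalently Proposition \ref{P-7.2} with $M=G$) to the push-forward measure $\lambda_*\mu$ --- needs one further observation that you never state and that is the crux of the paper's proof: $|||\lambda(A)|||=|||A|||$ for all $A\in\bP$, since the diagonal matrix $\lambda(A)$ has the same singular values as $A$. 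With it, Proposition \ref{P-7.2} gives
\begin{align*}
|||G(\lambda_*\mu)|||\le G(|||\cdot|||_*(\lambda_*\mu))
=G((|||\cdot|||\circ\lambda)_*\mu)
=G(|||\cdot|||_*\mu)=\exp\int_\bP\log|||A|||\,d\mu(A),
\end{align*}
and no normalization of the norm enters at any point. So the idea is present in your proposal, but as written the decisive identification $|||\cdot|||_*(\lambda_*\mu)=|||\cdot|||_*\mu$ is missing, and the fallback arguments you offer in its place would fail.
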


\begin{proof}
Proposition \ref{P-7.3} implies \eqref{F-7.10} by \cite[Proposition 4.1.6]{Hi}. For the last
inequality of \eqref{F-7.11}, since $|||\lambda(A)|||=|||A|||$ for all $A\in\bP$, we have by
Proposition \ref{P-7.2}
\begin{align*}
|||G(\lambda_*\mu)|||&\le G(|||\cdot|||_*(\lambda_*\mu))
=G((|||\cdot|||\circ\lambda)_*\mu) \\
&=G(|||\cdot|||_*\mu)=\exp\int_\bP\log|||A|||\,d\mu(A).
\end{align*}
\end{proof}

Note that \eqref{F-7.11} contains \eqref{F-7.5} for unitarily invariant norms $|||\cdot|||$,
while the latter holds for more general monotone norms.

We have the integral version of the Ky Fan majorization as
\begin{align}\label{F-7.12}
\cA(\mu)\prec\cA(\lambda_*\mu),
\end{align}
i.e.,
$$
\sum_{i=1}^k\lambda_i\biggl(\int_\bP A\,d\mu(A)\biggr)
\le\sum_{i=1}^k\int_\bP\lambda_i(A)\,d\mu(A),\qquad1\le k\le N,
$$
with equality for $k=N$. A little argument by replacing $\mu$ in \eqref{F-7.12} with
$\mu^{-1}$ gives the weak majorization
\begin{align}\label{F-7.13}
\cH(\mu)\prec_w\cH(\lambda_*\mu).
\end{align}
For any two-variable operator mean $\sigma$ and every $A,B\in\bP$ we have
\begin{align}\label{F-7.14}
\lambda(A\sigma B)\prec_w\lambda(A)\sigma\lambda(B).
\end{align}
Since we find no literatue on this, we give the proof in Appendix B.1 for completeness,
together with the proofs of \eqref{F-7.12} and \eqref{F-7.13} in Appendix B.2. In view of
\eqref{F-7.11}--\eqref{F-7.13} as well as \eqref{F-7.14}, one may expect the weak
majorization $\lambda(M(\mu))\prec_w M(\lambda_*\mu)$ for other operator means $M$ on
$\cPP$. In the next proposition we prove that this holds true for the power means $P_r$ with
$0<r\le1$.

\begin{prop}
For every $r\in(0,1]$ and every $\mu\in\cPP$,
\begin{align}\label{F-7.15}
P_r(\mu)\prec_w P_r(\lambda_*\mu).
\end{align}
\end{prop}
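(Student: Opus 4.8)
The plan is to reduce \eqref{F-7.15} to partial-sum inequalities and feed the fixed-point equation for $P_r$ into the two eigenvalue majorizations \eqref{F-7.12} and \eqref{F-7.14} that are already available. First I would make the right-hand side explicit. Since $\lambda_*\mu$ is carried by diagonal matrices and $\#_r$ preserves diagonal matrices, the defining equation \eqref{F-5.3}, $X=\int_\bP X\#_r A\,d(\lambda_*\mu)(A)$, has a diagonal solution; comparing $i$-th diagonal entries and invoking uniqueness in Theorem \ref{T-3.1} gives
$$
P_r(\lambda_*\mu)=\mathrm{diag}(c_1,\dots,c_N),\qquad
c_i:=\Bigl(\int_\bP\lambda_i(A)^r\,d\mu(A)\Bigr)^{1/r}.
$$
Because $\lambda_1(A)\ge\cdots\ge\lambda_N(A)$ for every $A$, the $c_i$ are already decreasing, so $\lambda_i(P_r(\lambda_*\mu))=c_i$ and \eqref{F-7.15} amounts to $\sum_{i=1}^k\lambda_i(P_r(\mu))\le\sum_{i=1}^k c_i$ for $1\le k\le N$. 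Write $X_0:=P_r(\mu)$ and $d_i:=\lambda_i(X_0)$.

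Then I would derive the intermediate inequality $\sum_{i=1}^k d_i\le\sum_{i=1}^k d_i^{1-r}c_i^r$. Starting from $X_0=\cA(X_0\#_r\mu)$ and applying the integral Ky Fan majorization \eqref{F-7.12} to the measure $X_0\#_r\mu$ yields $\lambda(X_0)\prec\int_\bP\lambda(X_0\#_r A)\,d\mu(A)$, hence $\sum_{i=1}^k d_i\le\int_\bP\sum_{i=1}^k\lambda_i(X_0\#_r A)\,d\mu(A)$. For each fixed $A$, \eqref{F-7.14} with $\sigma=\#_r$ gives $\lambda(X_0\#_r A)\prec_w\lambda(X_0)\#_r\lambda(A)$, that is $\sum_{i=1}^k\lambda_i(X_0\#_r A)\le\sum_{i=1}^k d_i^{1-r}\lambda_i(A)^r$. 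Integrating in $A$ and pulling the constant $d_i^{1-r}$ out of the integral produces $\sum_{i=1}^k d_i\le\sum_{i=1}^k d_i^{1-r}c_i^r$.

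The main obstacle is precisely the integration step just used: weak majorization is \emph{not} preserved under averaging in general, since the decreasing rearrangements of $\lambda(X_0\#_r A)$ may be misaligned across different $A$. What rescues the argument is that the dominating vector has entries $d_i^{1-r}\lambda_i(A)^r$, a product of two decreasing sequences with nonnegative exponents $1-r$ and $r$, hence itself decreasing in $i$; therefore its $k$-th Ky Fan sum is literally $\sum_{i=1}^k d_i^{1-r}\lambda_i(A)^r$ with no rearrangement, and this linear expression commutes with $\int d\mu$. I would state this monotonicity observation explicitly, as it is the crux that makes the otherwise-illegitimate integration valid here.

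Finally, the entrywise weighted arithmetic–geometric mean inequality $d_i^{1-r}c_i^r\le(1-r)d_i+rc_i$ gives $\sum_{i=1}^k d_i^{1-r}c_i^r\le(1-r)\sum_{i=1}^k d_i+r\sum_{i=1}^k c_i$. Combining this with the intermediate inequality and cancelling $(1-r)\sum_{i=1}^k d_i$ leaves $r\sum_{i=1}^k d_i\le r\sum_{i=1}^k c_i$, i.e. $\sum_{i=1}^k\lambda_i(P_r(\mu))\le\sum_{i=1}^k c_i$ for every $k$, which is exactly \eqref{F-7.15}.
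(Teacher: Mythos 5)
Your proof is correct and takes essentially the same route as the paper: both identify $P_r(\lambda_*\mu)$ entrywise as $c_i=\bigl[\int_\bP\lambda_i(A)^r\,d\mu(A)\bigr]^{1/r}$, and both feed the fixed-point equation $X_0=\cA(X_0\#_r\mu)$ into \eqref{F-7.12} and \eqref{F-7.14} to reach the intermediate inequality $\sum_{i=1}^k\lambda_i(X_0)\le\sum_{i=1}^k\lambda_i(X_0)^{1-r}c_i^r$. The only difference is the closing step, and it is cosmetic: the paper finishes with H\"older's inequality, $\sum_{i=1}^k\lambda_i(X_0)^{1-r}c_i^r\le\bigl[\sum_{i=1}^k\lambda_i(X_0)\bigr]^{1-r}\bigl[\sum_{i=1}^k c_i\bigr]^r$, whereas you use the entrywise weighted arithmetic--geometric mean inequality and cancel $(1-r)\sum_{i=1}^k\lambda_i(X_0)$; both are valid, and your explicit remark that the dominating sequences are decreasing (so the partial-sum inequalities integrate legitimately) makes precise a point the paper leaves implicit.
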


\begin{proof}
Since $P_1=\cA$, the case $\alpha=1$ holds by \eqref{F-7.12}. So we may assume that
$0<\alpha<1$. Since a simple calculation gives
$$
P_\alpha(\lambda_*\mu)
=\biggl(\biggl[\int_\bP\lambda_i(A)^\alpha\,d\mu(A)\biggr]^{1/\alpha}\biggr)_{i=1}^N,
$$
what we need to prove is
\begin{align}\label{F-7.16}
\sum_{i=1}^k\lambda_i(P_\alpha(\mu))\le
\sum_{i=1}^k\biggl[\int_\bP\lambda_i(A)^\alpha\,d\mu(A)\biggr]^{1/\alpha},
\qquad1\le k\le N.
\end{align}
Set $X_0:=P_\alpha(\mu)$. Since $X_0=\cA(X_0\#_\alpha\mu)$, for $1\le k\le N$ we have by \eqref{F-7.12}
$$
\sum_{i=1}^k\lambda_i(X_0)\le\sum_{i=1}^k\cA((\lambda_i)_*(X_0\#_\alpha\mu))
=\int_\bP\sum_{i=1}^k\lambda_i(X_0\#_\alpha A)\,d\mu(A).
$$
Since $\lambda(X_0)\#_\alpha\lambda(A)
=(\lambda_i(X_0)^{1-\alpha}\lambda_i(A)^\alpha)_{i=1}^N$, it follows from \eqref{F-7.14} for
$\sigma=\#_\alpha$ that
$$
\sum_{I=1}^k\lambda_i(X_0\#_\alpha A)
\le\sum_{i=1}^k\lambda_i(X_0)^{1-\alpha}\lambda_i(A)^\alpha,\qquad A\in\bP.
$$
Therefore,
$$
\sum_{i=1}^k\lambda_i(X_0)\le
\sum_{i=1}^k\lambda_i(X_0)^{1-\alpha}\int_\bP\lambda_i(A)^\alpha\,d\mu(A).
$$
Thanks to H\"older's inequality we find that
$$
\sum_{i=1}^k\lambda_i(X_0)\le
\Biggl[\sum_{i=1}^k\lambda_i(X_0)\Biggr]^{1-\alpha}
\Biggl(\sum_{i=1}^k\biggl[\int_\bP\lambda_i(A)^r\,d\mu(A)
\biggr]^{1/\alpha}\Biggr)^\alpha,
$$
which implies \eqref{F-7.16}.
\end{proof}

\begin{cor}
Assume that $f$ is a non-negative and non-decreasing convex function on $(0,\infty)$. Let
$\alpha\in(0,1]$. For every $\mu\in\cPP$ and any unitarily invariant norm $|||\cdot|||$,
$$
|||f(P_\alpha(\mu))|||\le|||f(P_\alpha(\lambda_*\mu))|||.
$$
In particular,
$$
|||P_\alpha(\mu)|||\le|||P_\alpha(\lambda_*\mu)|||\le P_\alpha(|||\cdot|||_*\mu)
=\biggl[\int_\bP|||A|||^\alpha\,d\mu(A)\biggr]^{1/\alpha}.
$$
\end{cor}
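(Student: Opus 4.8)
The plan is to derive the corollary from the weak majorization $P_\alpha(\mu)\prec_w P_\alpha(\lambda_*\mu)$ of \eqref{F-7.15} together with Proposition \ref{P-7.2}. For the main assertion, recall the majorization principle stated in Section 7.2 (see \cite[Proposition 4.1.6]{Hi}): if $A\prec_w B$ then $|||g(A)|||\le|||g(B)|||$ for every unitarily invariant norm $|||\cdot|||$ and every non-negative non-decreasing convex function $g$ on $(0,\infty)$. Applying this with $A=P_\alpha(\mu)$, $B=P_\alpha(\lambda_*\mu)$ and invoking \eqref{F-7.15} yields $|||f(P_\alpha(\mu))|||\le|||f(P_\alpha(\lambda_*\mu))|||$ at once. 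Taking $f(x)=x$, which is non-negative, non-decreasing and convex, then gives the first inequality $|||P_\alpha(\mu)|||\le|||P_\alpha(\lambda_*\mu)|||$ of the displayed chain.

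For the remaining inequality $|||P_\alpha(\lambda_*\mu)|||\le P_\alpha(|||\cdot|||_*\mu)$ I would argue exactly as in the proof of the preceding corollary for $G$. Since $P_\alpha=\cA_{\#_\alpha}\in\fM$ and every unitarily invariant norm is monotone, Proposition \ref{P-7.2} applies and gives $|||P_\alpha(\lambda_*\mu)|||\le P_\alpha(|||\cdot|||_*(\lambda_*\mu))$. Because push-forwards compose, $|||\cdot|||_*(\lambda_*\mu)=(|||\cdot|||\circ\lambda)_*\mu$, and since a unitarily invariant norm of a positive operator depends only on its eigenvalues, $|||\lambda(A)|||=|||A|||$ for all $A\in\bP$; hence $(|||\cdot|||\circ\lambda)_*\mu=|||\cdot|||_*\mu$ and the bound collapses to $P_\alpha(|||\cdot|||_*\mu)$. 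Finally, $|||\cdot|||_*\mu$ is a Borel probability measure on $(0,\infty)$, so specializing \eqref{F-5.3} to the scalar case reduces the defining equation to $x=x^{1-\alpha}\int_\bP|||A|||^\alpha\,d\mu(A)$, whence $P_\alpha(|||\cdot|||_*\mu)=\bigl[\int_\bP|||A|||^\alpha\,d\mu(A)\bigr]^{1/\alpha}$, which is the asserted equality.

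This argument carries no essential obstacle, as the substantive content already resides in \eqref{F-7.15} and Proposition \ref{P-7.2}. The only points needing care are purely formal: that $f(x)=x$ is a legitimate specialization so that the identity-norm case follows from the main inequality; that unitarily invariant norms are monotone so that Proposition \ref{P-7.2} is applicable; and that the identity $|||\lambda(A)|||=|||A|||$ holds because for $A\in\bP$ the eigenvalues coincide with the singular values on which a unitarily invariant norm depends.
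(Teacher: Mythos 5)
Your proof is correct and is essentially the argument the paper intends: the paper states this corollary without proof because it follows the same pattern as the earlier corollary for $G$, namely \eqref{F-7.15} combined with the majorization principle \cite[Proposition 4.1.6]{Hi} for the first inequality, and Proposition \ref{P-7.2} together with $|||\lambda(A)|||=|||A|||$ (so that $|||\cdot|||_*(\lambda_*\mu)=|||\cdot|||_*\mu$) plus the scalar evaluation of $P_\alpha$ for the rest. Your verification of the side conditions (monotonicity of unitarily invariant norms, admissibility of $f(x)=x$, and the scalar fixed-point computation giving $P_\alpha(|||\cdot|||_*\mu)=\bigl[\int_\bP|||A|||^\alpha\,d\mu(A)\bigr]^{1/\alpha}$) matches what the paper uses in the analogous proof for $G$ and in the proof of \eqref{F-7.15}.
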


By letting $\alpha\searrow0$ in \eqref{F-7.15} we have
$$
G(\mu)\prec_w G(\lambda_*\mu),
$$
which is however weaker than \eqref{F-7.9}.

\begin{problem}\rm
When $\alpha=-1$, \eqref{F-7.15} reduces to \eqref{F-7.13}. But it is unknown whether the
weak majorization in \eqref{F-7.15} holds even for $\alpha\in(-1,0)$ or not. The problem is
to prove \eqref{F-7.16} for $\alpha\in(-1,0)$ and $\mu\in\cPP$. Note that a weaker version
$$
\sum_{i=1}^k\lambda_i(P_\alpha(\mu))\le
\biggl[\int_\bP\biggl(\sum_{i=1}^k\lambda_i(A)\biggr)^\alpha\,d\mu(A)
\biggr]^{1/\alpha}
$$
is the special case of the norm inequality in \eqref{F-7.3} for the Ky Fan $k$-norm. Now,
let us try to apply \eqref{F-7.15} to $-\alpha\in(0,1)$ and $\mu^{-1}$; then we have
$$
P_{-\alpha}(\mu^{-1})\prec_wP_{-\alpha}(\lambda_*(\mu^{-1})).
$$
Note that $P_{-\alpha}(\mu^{-1})=P_\alpha(\mu)^{-1}$ and
\begin{align*}
P_{-\alpha}(\lambda_*(\mu^{-1}))&=P_{-\alpha}(((\lambda^\uparrow)_*\mu)^{-1})
=P_\alpha((\lambda^\uparrow)_*\mu)^{-1} \\
&=(P_\alpha(\lambda_*\mu)^\uparrow)^{-1}
=(P_\alpha(\lambda_*\mu)^{-1})^\downarrow,
\end{align*}
where $\lambda^\uparrow(A):=\lambda(A)^\uparrow$. Therefore,
\begin{align}\label{F-7.17}
P_\alpha(\mu)^{-1}\prec_wP_\alpha(\lambda_*\mu)^{-1},
\end{align}
which is a complementary version of \eqref{F-7.15}. Another majorization that is stronger
than \eqref{F-7.17} is the supermajorization
$$
P_\alpha(\mu)\prec^wP_\alpha(\lambda_*\mu),
$$
i.e.,
$$
\sum_{i=1}^k\lambda_{N+1-i}(P_\alpha(\mu))\ge\sum_{i=1}^kP_\alpha((\lambda_{N+1-i})_*\mu),
\qquad1\le k\le N,
$$
which however cannot hold. Indeed, if the above supermajorization holds for $P_{-1}=\cH$,
then together with \eqref{F-7.13} we have $\lambda(\cH(\mu))\prec\cH(\lambda_*\mu)$, which
is impossible.
\end{problem}

\subsection{Minkowski determinant inequality}

The famous \emph{Minkowski determinant inequality} says that, for every $A,B\in B(\cH)^+$,
$$
{\det}^{1/N}(A+B)\ge{\det}^{1/N}A+{\det}^{1/N}B,
$$
or equivalently, $A\in B(\cH)^+\mapsto{\det}^{1/N}A$ is concave:
$$
{\det}^{1/N}(A\triangledown_tB)
\ge({\det}^{1/N}A)\triangledown_t({\det}^{1/N}B),\qquad0\le t\le1.
$$
The following extension was given in \cite[Corollary 3.2]{BH}.

\begin{lemma}\label{L-7.8}
If a two-variable operator mean $\sigma$ (in the Kubo-Ando sense) is
g.c.v.\ (see Section 5.1), then
$$
{\det}^{1/N}(A\sigma B)\ge({\det}^{1/N}A)\sigma({\det}^{1/N}B),\qquad A,B\in B(\cH)^+.
$$
The reverse inequality holds if $\sigma$ is g.c.c.
\end{lemma}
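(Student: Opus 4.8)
The plan is to reduce the operator inequality to a scalar Jensen inequality governed by the geometric convexity of $f_\sigma$. First I would dispose of the boundary case. Since the determinant is continuous on $B(\cH)^+$ (here $\dim\cH=N<\infty$, so SOT and norm topologies coincide) and $A\sigma B=\lim_{\eps\searrow0}(A+\eps I)\sigma(B+\eps I)$, it suffices to prove the inequality for invertible $A,B\in\bP$, where the Kubo--Ando formula $A\sigma B=A^{1/2}f_\sigma(A^{-1/2}BA^{-1/2})A^{1/2}$ is available.

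Next I would compute both sides explicitly. Put $C:=A^{-1/2}BA^{-1/2}$ and let $\lambda_1,\dots,\lambda_N>0$ be its eigenvalues. By the functional calculus, $\det(A\sigma B)=\det A\cdot\det f_\sigma(C)=\det A\cdot\prod_{i=1}^N f_\sigma(\lambda_i)$, so
$$
{\det}^{1/N}(A\sigma B)=(\det A)^{1/N}\Bigl(\prod_{i=1}^N f_\sigma(\lambda_i)\Bigr)^{1/N}.
$$
On the right-hand side, $a:={\det}^{1/N}A$ and $b:={\det}^{1/N}B$ are positive scalars, and for scalars the formula collapses to $a\sigma b=a^{1/2}f_\sigma(b/a)a^{1/2}=a\,f_\sigma(b/a)$. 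Since $b/a=(\det B/\det A)^{1/N}=(\det C)^{1/N}=(\prod_i\lambda_i)^{1/N}$, this gives
$$
({\det}^{1/N}A)\,\sigma\,({\det}^{1/N}B)=(\det A)^{1/N}\,f_\sigma\Bigl(\bigl({\textstyle\prod_{i=1}^N\lambda_i}\bigr)^{1/N}\Bigr).
$$
Cancelling the common factor $(\det A)^{1/N}$, the assertion reduces to the purely scalar inequality
$$
\Bigl(\prod_{i=1}^N f_\sigma(\lambda_i)\Bigr)^{1/N}\ge f_\sigma\Bigl(\bigl({\textstyle\prod_{i=1}^N\lambda_i}\bigr)^{1/N}\Bigr).
$$

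Finally I would recognize this last inequality as Jensen's inequality. Writing $g(t):=\log f_\sigma(e^t)$ and $t_i:=\log\lambda_i$, taking logarithms turns it into $\frac1N\sum_{i=1}^N g(t_i)\ge g\bigl(\frac1N\sum_{i=1}^N t_i\bigr)$, which holds precisely when $g$ is convex on $\bR$; and convexity of $g$ is exactly the definition of $\sigma$ being g.c.v.\ recalled before Definition \ref{D-6.4}. The g.c.c.\ case is identical: then $g$ is concave, Jensen reverses, and so the whole chain reverses, yielding the reverse determinant inequality. I do not expect a serious obstacle: the only points needing care are the boundary reduction by continuity and the clean passage to eigenvalues via the functional calculus (together with the scalar evaluation $a\sigma b=a\,f_\sigma(b/a)$), after which the statement is a one-line consequence of Jensen's inequality applied to the single (convex or concave) function $g$.
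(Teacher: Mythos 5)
Your proof is correct, and it is worth noting that it is genuinely self-contained, whereas the paper does not prove Lemma \ref{L-7.8} at all: it simply cites Corollary 3.2 of Bourin--Hiai \cite{BH}, where the inequality is obtained within a general framework of anti-norms (of which ${\det}^{1/N}$ is the prototype). Your route exploits what is special about the determinant: multiplicativity under the functional calculus gives the exact identity ${\det}^{1/N}(A\sigma B)=(\det A)^{1/N}\bigl(\prod_{i=1}^N f_\sigma(\lambda_i)\bigr)^{1/N}$ with $\lambda_i$ the eigenvalues of $A^{-1/2}BA^{-1/2}$, while the scalar evaluation $a\sigma b=a\,f_\sigma(b/a)$ and $\det(A^{-1/2}BA^{-1/2})=\det B/\det A$ turn the right-hand side into $(\det A)^{1/N}f_\sigma\bigl((\prod_i\lambda_i)^{1/N}\bigr)$, so that everything collapses to Jensen's inequality for the convex (resp.\ concave) function $g(t)=\log f_\sigma(e^t)$ --- which is precisely the paper's definition of g.c.v.\ (resp.\ g.c.c.). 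All steps check out: the reduction to invertible $A,B$ is legitimate because $(A+\eps I)\sigma(B+\eps I)\searrow A\sigma B$ (in norm, as $\dim\cH=N<\infty$) and ${\det}^{1/N}(A+\eps I)\searrow{\det}^{1/N}A$, with the scalar mean downward continuous, so the inequality passes to the limit in either direction; and since every step before Jensen is an equality, the g.c.c.\ case reverses exactly as you say. What your argument buys is an elementary, two-line-modulo-bookkeeping proof requiring nothing beyond the Kubo--Ando representation; what the cited anti-norm approach buys is generality, since it covers concave functionals for which no such clean eigenvalue factorization exists.
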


The following is the further extension to derived operator means $M\in\fM^+$ or $\fM^-$.

\begin{prop}\label{P-7.9}
For every derived operator mean $M\in\frak{M}^+$,
\begin{align}\label{F-7.18}
{\det}^{1/N}M(\mu)\ge M(({\det}^{1/N})_*\mu),\qquad\mu\in\cPP,
\end{align}
where $({\det}^{1/N})_*\mu$ is the push-forward of $\mu$ by
$A\in\bP\mapsto\det^{1/N}A\in(0,\infty)$. The reversed inequality holds if $M\in\frak{M}^-$.
\end{prop}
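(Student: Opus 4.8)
The plan is to follow exactly the template already used for Theorem~\ref{T-6.5} and Proposition~\ref{P-7.2}: show that inequality \eqref{F-7.18} is preserved under the two generating procedures (A) and (B), and then verify it directly for the base means $\cA$ and $G$. Since $\fM^+$ consists of the means built from $\cA,G$ by applying (A) with g.c.v.\ two-variable means and (B), this establishes \eqref{F-7.18} for every $M\in\fM^+$; the reversed inequality for $M\in\fM^-$ will then be deduced from the $\fM^+$ case by passing to adjoints, using the equivalence $M\in\fM^+\iff M^*\in\fM^-$ from Definition~\ref{D-6.4}. Throughout, the measures $({\det}^{1/N})_*\mu$ live on $(0,\infty)$, so $M$ and $M_\sigma$ applied to them are understood in the scalar case $\dim\cH=1$, consistently with Proposition~\ref{P-6.2}.

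For the base cases I first treat $\cA$. Here $\cA\bigl(({\det}^{1/N})_*\mu\bigr)=\int_\bP{\det}^{1/N}(A)\,d\mu(A)$ and ${\det}^{1/N}\cA(\mu)={\det}^{1/N}\bigl(\int_\bP A\,d\mu(A)\bigr)$, so \eqref{F-7.18} is precisely the integral form of the Minkowski determinant inequality, which follows from Jensen's inequality applied to the concave map ${\det}^{1/N}$ on $B(\cH)^+$. For $G$ the inequality in fact holds with equality: the Karcher mean of a Borel probability measure on $(0,\infty)$ is its geometric (log-Euclidean) mean, so that $G\bigl(({\det}^{1/N})_*\mu\bigr)=\exp\bigl(\tfrac1N\int_\bP\log\det A\,d\mu(A)\bigr)$, and this coincides with ${\det}^{1/N}G(\mu)$ by the determinant identity $\det G(\mu)=\exp\int_\bP\log\det A\,d\mu(A)$ already used in the proof of Proposition~\ref{P-7.3}.

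The crux is preservation under procedure (A). Assume $M\in\fM$ satisfies \eqref{F-7.18} and let $\sigma$ be g.c.v. Put $X_0:=M_\sigma(\mu)$, so $X_0=M(X_0\sigma\mu)$ and hence ${\det}^{1/N}X_0\ge M\bigl(({\det}^{1/N})_*(X_0\sigma\mu)\bigr)$. Writing $\psi_{X_0}(A):=X_0\sigma A$, Lemma~\ref{L-7.8} gives, for every $A\in\bP$,
$$
\bigl({\det}^{1/N}\circ\psi_{X_0}\bigr)(A)={\det}^{1/N}(X_0\sigma A)\ge\bigl({\det}^{1/N}X_0\bigr)\sigma\bigl({\det}^{1/N}A\bigr),
$$
so that Lemma~\ref{L-3.4} yields $({\det}^{1/N})_*(X_0\sigma\mu)\ge({\det}^{1/N}X_0)\sigma\bigl(({\det}^{1/N})_*\mu\bigr)$ as measures on $(0,\infty)$. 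Applying the monotonicity of $M$ and combining gives ${\det}^{1/N}X_0\ge M\bigl(({\det}^{1/N}X_0)\sigma(({\det}^{1/N})_*\mu)\bigr)$, whence Theorem~\ref{T-3.1}\,(2) forces ${\det}^{1/N}X_0\ge M_\sigma\bigl(({\det}^{1/N})_*\mu\bigr)$, which is \eqref{F-7.18} for $M_\sigma$. Here the g.c.v.\ hypothesis on $\sigma$ is indispensable: it is exactly what makes the inequality in Lemma~\ref{L-7.8} point in the required direction, and this is the only delicate ingredient. Preservation under (B) is routine: from ${\det}^{1/N}M_j(\mu)\ge M_j\bigl(({\det}^{1/N})_*\mu\bigr)$ for $j=0,1,\dots,n$ and the monotonicity of $M_0$ one gets ${\det}^{1/N}M(\mu)\ge M_0\bigl(\sum_j w_j\delta_{M_j(({\det}^{1/N})_*\mu)}\bigr)=M\bigl(({\det}^{1/N})_*\mu\bigr)$.

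Finally, for $M\in\fM^-$ one has $M^*\in\fM^+$, so \eqref{F-7.18} holds for $M^*$; applying it to $\mu^{-1}$ and using ${\det}^{1/N}(A^{-1})={\det}^{-1/N}(A)$, the identity $({\det}^{1/N})_*(\mu^{-1})=\bigl(({\det}^{1/N})_*\mu\bigr)^{-1}$, and the adjoint relations $M(\nu)=M^*(\nu^{-1})^{-1}$ and $M^*(\nu^{-1})=M(\nu)^{-1}$, I invert both sides to obtain ${\det}^{1/N}M(\mu)\le M\bigl(({\det}^{1/N})_*\mu\bigr)$, the reversed inequality. The only genuinely nontrivial point in the whole argument is the procedure-(A) step, and even there the essential work is packaged in the already-available Lemma~\ref{L-7.8}.
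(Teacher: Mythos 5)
Your proof is correct and follows essentially the same route as the paper: the crux in both is the preservation of \eqref{F-7.18} under procedure (A), established via Lemma \ref{L-7.8} pointwise, Lemma \ref{L-3.4} to pass to the push-forward measures, monotonicity of $M$, and Theorem \ref{T-3.1}\,(2), with procedure (B), the base cases, and adjoint duality ($M\in\frak{M}^+\iff M^*\in\frak{M}^-$) supplying the rest. The only (inessential) divergence is in the base case for $G$, which the paper's template (following Theorem \ref{T-6.5}) would obtain by applying the (A)-step to $\cA$ with $\sigma=\#_r$ and letting $r\searrow0$ via Proposition \ref{P-5.2}, whereas you verify it directly, with equality, from the determinant identity $\det G(\mu)=\exp\int_\bP\log\det A\,d\mu(A)$ already used in Proposition \ref{P-7.3}.
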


\begin{proof}
The proof is again similar to that of Theorem \ref{T-6.5}. Here we only show that if
$M\in\fM$ satisfies \eqref{F-7.18} and $\sigma$ is a g.c.v.\ two-variable operator mean
with $\sigma\ne\frak{l}$, then the deformed $M_\sigma$ does the same. For every $\mu\in\cPP$
let $X_0:=M_\sigma(\mu)$. Then
\begin{align*}
{\det}^{1/N}X_0&={\det}^{1/N}M(X_0\sigma\mu)\ge M(({\det}^{1/N})_*(X_0\sigma\mu)) \\
&\ge M(({\det}^{1/N}X_0)\sigma(({\det}^{1/N})_*\mu)),
\end{align*}
where the last inequality can be shown by use of Lemma \ref{L-7.8} as in the proof of
Proposition \ref{P-7.2}. By Theorem \ref{T-3.1}\,(2) (for the case $\dim\cH=1$) this implies
that ${\det}^{1/N}X_0\ge M_\sigma(({\det}^{1/N})_*\mu)$.

The latter assertion follows from the first since
${\det}^{1/N}M(\mu^{-1})={\det}^{-1/N}M^*(\mu)$ and $M\in\frak{M}^+$ $\iff$
$M^*\in\frak{M}^-$.
\end{proof}

In particular, since $G\in\frak{M}^+\cap\frak{M}^-$, we have
$$
{\det}^{1/N}G(\mu)=G(({\det}^{1/N})_*\mu)=\exp\int_\bP{1\over N}\,\tr(\log A)\,d\mu(A)
$$
for every $\mu\in\cPP$, as verified in \cite{Li}. For power means Proposition \ref{P-7.9}
gives:

\begin{cor}
For every $\mu\in\cPP$,
\begin{align*}
&{\det}^{1/N}P_\alpha(\mu)\ge P_\alpha(({\det}^{1/N})_*\mu)
\quad\mbox{for $0<\alpha\le1$}, \\
&{\det}^{1/N}P_\alpha(\mu)\le P_\alpha(({\det}^{1/N})_*\mu)
\quad\mbox{for $-1\le \alpha<0$}.
\end{align*}
\end{cor}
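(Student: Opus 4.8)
The plan is to obtain the Corollary immediately from Proposition~\ref{P-7.9} by specializing $M$ to the power means $P_\alpha$. Indeed, the two asserted inequalities are precisely the two directions of \eqref{F-7.18}, so everything reduces to checking into which of the subclasses $\fM^+$, $\fM^-$ the mean $P_\alpha$ falls. These memberships are recorded in Definition~\ref{D-6.4}: $P_\alpha\in\fM^+$ for $0<\alpha\le1$ and $P_{-\alpha}\in\fM^-$ for $0<\alpha\le1$ (equivalently $P_\alpha\in\fM^-$ for $-1\le\alpha<0$).

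For completeness I would briefly recall why these hold. For $\alpha\in(0,1]$ one has $P_\alpha=\cA_{\#_\alpha}$ (Section 5.2), and the weighted geometric mean $\#_\alpha$ is g.c.v.\ since its representing function is $f_{\#_\alpha}(x)=x^\alpha$, whence $\log f_{\#_\alpha}(e^t)=\alpha t$ is affine and in particular convex. As $\cA\in\fM^+$ and procedure (A) with a g.c.v.\ two-variable mean preserves membership in $\fM^+$, we get $P_\alpha\in\fM^+$. Applying the first inequality of Proposition~\ref{P-7.9} then yields ${\det}^{1/N}P_\alpha(\mu)\ge P_\alpha(({\det}^{1/N})_*\mu)$ for $0<\alpha\le1$.

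For the negative range I would pass to adjoints. By Proposition~\ref{P-4.1}\,(3), $P_{-\alpha}=(P_\alpha)^*=(\cA_{\#_\alpha})^*=\cH_{\#_\alpha}$, using $\cA^*=\cH$ and $(\#_\alpha)^*=\#_\alpha$. Since the affine function $\alpha t$ is also concave, $\#_\alpha$ is g.c.c., and starting from $\cH\in\fM^-$ the deformation by $\#_\alpha$ keeps us in $\fM^-$; hence $P_{-\alpha}\in\fM^-$. The reversed inequality of Proposition~\ref{P-7.9} then gives ${\det}^{1/N}P_\alpha(\mu)\le P_\alpha(({\det}^{1/N})_*\mu)$ for $-1\le\alpha<0$. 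There is no genuine obstacle here: all the analytic content—the use of Lemma~\ref{L-7.8} inside the fixed-point characterization of $M_\sigma$—is already absorbed in Proposition~\ref{P-7.9}, and the only thing to verify is the elementary observation that $\#_\alpha$ is simultaneously g.c.v.\ and g.c.c., which is exactly what routes $P_{\pm\alpha}$ into the correct subclass and selects the direction of the determinant inequality.
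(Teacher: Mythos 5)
Your proposal is correct and follows exactly the paper's route: the paper states this corollary as an immediate specialization of Proposition \ref{P-7.9} to $M=P_\alpha$, relying on the memberships $P_\alpha\in\fM^+$ ($0<\alpha\le1$) and $P_{-\alpha}\in\fM^-$ recorded in Definition \ref{D-6.4}, which in turn rest on the observation you spell out (that $\#_\alpha$ is both g.c.v.\ and g.c.c., and $P_\alpha=\cA_{\#_\alpha}$, $P_{-\alpha}=\cH_{\#_\alpha}=(P_\alpha)^*$). Your added verification of these memberships is consistent with the paper and introduces no gap.
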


In the case of the $n$-variable power means, the above inequality means that, for any
weight vector $\bw$ and for every $A_1,\dots,A_n\in\bP$,
\begin{align*}
&{\det}^{1/N}P_{\bw,\alpha}(A_1,\dots,A_n)
\ge P_{\bw,\alpha}({\det}^{1/N}A_1,\dots,{\det}^{1/N}A_n)
\quad\mbox{for $0<\alpha\le1$}, \\
&{\det}^{1/N}P_{\bw,\alpha}(A_1,\dots,A_n)
\ge P_{\bw,\alpha}({\det}^{1/N}A_1,\dots,{\det}^{1/N}A_n)
\quad\mbox{for $-1\le \alpha<0$}.
\end{align*}

\begin{problem}\rm
It seems interesting to extend \eqref{F-7.18} to the Fuglede-Kadison determinant in a finite
von Neumann algebra. Let $\cN$ be a von Neumann algebra on $\cH$ with a faithful normal
tracial state $\tau$. For every invertible operator $X\in\cN$ the
\emph{Fuglede-Kadison determinant} of $X$ is given
as
$$
\Delta(X):=\exp\tau(\log|X|).
$$
Let $M\in\frak{M}$ and $\mu\in\cPP$. Assume that $\mu$ is supported on $\bP(\cN)$, the set
of positive invertible operators in $\cN$. Then, for every unitary $U\in\cN'$, the commutant
of $\cN$, we have $UM(\mu)U^*=M(U\mu U^*)=M(\mu)$. This says that $M(\mu)\in\cN$ and so
$M(\mu)\in\bP(\cN)$. We may conjecture that the extension of Proposition \ref{P-7.9} holds as
\begin{align*}
&\Delta(M(\mu))\ge M(\Delta_*\mu)\quad\mbox{for $M\in\frak{M}^+$}, \\
&\Delta(M(\mu))\le M(\Delta_*\mu)\quad\mbox{for $M\in\frak{M}^-$}.
\end{align*}
\end{problem}

\section*{Acknowledgments}

The work of F.~Hiai was supported in part by JSPS KAKENHI Grant Number JP17K05266.
 The work of Y.~Lim was supported by the National
Research Foundation of Korea (NRF) grant funded by the Korea
government(MEST) No.2015R1A3A2031159 and 2016R1A5A1008055.

\appendix

\section{SOT-continuity of certain operator means}

The next proposition is concerned with the SOT-continuity of two-varialbe operator means.

\begin{prop}\label{P-A.1}
Any two-variable operator mean $\sigma$ $($in the Kubo-Ando sense$)$
is SOT-continuous on $\{A\in\bP:A\ge\eps I\}\times\bP$ and on
$\bP\times\{A\in\bP:A\ge\eps I\}$ for every $\eps>0$.
\end{prop}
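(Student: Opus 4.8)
The plan is to prove the first assertion only, since the second follows from it applied to the \emph{transpose} mean $\sigma'$ determined by $f_{\sigma'}(x)=xf_\sigma(1/x)$: this is again a Kubo--Ando mean (operator monotone, value $1$ at $1$) and satisfies $A\sigma B=B\sigma'A$, so continuity of $\sigma$ on $\bP\times\{A\ge\eps I\}$ is continuity of $\sigma'$ on $\{A\ge\eps I\}\times\bP$ after the (SOT-homeomorphic) coordinate flip. Throughout I work with sequences, which suffices for all applications in the paper; recall that an SOT-convergent sequence is norm-bounded by the uniform boundedness principle, and that $\{A\ge\eps I\}$ is closed under SOT-limits of sequences (since $\langle x,A_kx\rangle\to\langle x,Ax\rangle$ preserves $\langle x,Ax\rangle\ge\eps\|x\|^2$). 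So let $(A_k,B_k)\to(A,B)$ in SOT with $A_k,A\ge\eps I$ and all of $\|A_k\|,\|B_k\|,\|A\|,\|B\|\le R$.

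I will route everything through the representation $A\sigma B=A^{1/2}f_\sigma(A^{-1/2}BA^{-1/2})A^{1/2}$ and assemble it from three standard SOT-facts, all restricted to norm-bounded sets: (a) multiplication $(S,T)\mapsto ST$ is jointly SOT-continuous on bounded sets (from $S_kT_k\xi-ST\xi=S_k(T_k-T)\xi+(S_k-S)(T\xi)$); (b) for any continuous $h$ on a compact interval $[0,\rho]$, the functional calculus $X\mapsto h(X)$ is SOT-continuous on $\{0\le X\le\rho I\}$, obtained by approximating $h$ uniformly by a polynomial $p$ (Weierstrass), controlling $\|h(X_k)-p(X_k)\|$ and $\|h(X)-p(X)\|$ in operator norm and using (a) for $p(X_k)\to p(X)$; and (c) as a special case of (b), on $\{\eps I\le A\le RI\}$ the maps $A\mapsto A^{\pm1/2}$ are SOT-continuous, since $x\mapsto x^{\pm1/2}$ is continuous on $[\eps,R]$.

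Now assemble. By (c), $A_k^{1/2}\to A^{1/2}$ and $A_k^{-1/2}\to A^{-1/2}$ in SOT, with $\|A_k^{-1/2}\|\le\eps^{-1/2}$. Hence, by two applications of (a), $X_k:=A_k^{-1/2}B_kA_k^{-1/2}\to A^{-1/2}BA^{-1/2}=:X$ in SOT, and crucially $0\le X_k\le\eps^{-1}R\,I$ for all $k$; this uniform bound is exactly where the hypothesis $A\ge\eps I$ enters, and is what keeps the argument of $f_\sigma$ inside a fixed compact interval even though $B$ ranges over the unbounded set $\bP$. Since $f_\sigma$ is increasing and bounded below on $(0,\infty)$ its limit at $0$ exists, so $f_\sigma$ extends continuously to $[0,\eps^{-1}R]$; by (b), $f_\sigma(X_k)\to f_\sigma(X)$ in SOT, bounded by $f_\sigma(\eps^{-1}R)$. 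A final application of (a) sandwiches this between $A_k^{1/2}$ to give $A_k\sigma B_k=A_k^{1/2}f_\sigma(X_k)A_k^{1/2}\to A^{1/2}f_\sigma(X)A^{1/2}=A\sigma B$ in SOT. The main obstacle to watch is precisely the unboundedness of the free factor: without the lower bound on $A$ the operators $X_k$ need not lie in a common bounded set, so $f_\sigma$ could not be replaced by a single polynomial---this is the mechanism behind the paper's remark that $\sigma$ fails to be SOT-continuous on all of $\bP\times\bP$. A secondary point is that functional calculus is only SOT- (not norm-) continuous, which is why the reduction to polynomials together with (a) is needed rather than a direct norm estimate.
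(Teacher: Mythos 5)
Your proof is correct and takes essentially the same route as the paper's: both pass through the representation $A\sigma B=A^{1/2}f_\sigma(A^{-1/2}BA^{-1/2})A^{1/2}$, use norm-boundedness from the uniform boundedness principle to get SOT-continuity of products and of the functional calculus on bounded sets (with the lower bound $A\ge\eps I$ keeping $A^{-1/2}BA^{-1/2}$ in a fixed bounded set), and reduce the second assertion to the first via the transpose $\sigma'$. The only difference is that you spell out details the paper leaves implicit, notably the Weierstrass polynomial approximation behind the SOT-continuity of the functional calculus.
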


\begin{proof}
Let $A,A_k,B,B_k\in\bP$ ($k\in\bN$), where $A,A_k\ge\eps I$ for some $\eps>0$, and assume
that $A_k\to A$ and $B_k\to B$ in SOT. Note that $A_k$'s and $B_k$'s are $\|\cdot\|$-bounded
by the uniform boundedness theorem. Then $A_k^{1/2}\to A_{1/2}$ and $A_k^{-1/2}\to A^{-1/2}$
in SOT, so that $A_k^{-1/2}B_kA_k^{-1/2}\to A^{-1/2}BA^{-1/2}$ in SOT. Therefore,
$f_\sigma(A_k^{-1/2}B_kA_k^{-1/2})\to f_\sigma(A^{-1/2}BA^{-1/2})$ in SOT, which implies that
$A_k\sigma B_k=A_k^{1/2}f_\sigma(A_k^{-1/2}B_kA_k^{-1/2})A_k^{1/2}\to A\sigma B$ in SOT.
Let $\sigma'$ be the \emph{transpose} of $\sigma$ \cite{KA}. We have
$B_k\sigma A_k=A_k\sigma'B_k\to A\sigma'B=B\sigma A$ in SOT.
\end{proof}

\begin{remark}\label{R-A.2}\rm
When $\dim\cH=\infty$, the operator mean $\sigma$ is not necessarily SOT-continuous on the
whole $\bP\times\bP$. For this, recall the well-known fact that there are
dense subspaces $\cK$ and $\cL$ of $\cH$ such that $\cK\cap\cL=\{0\}$. This follows from a
classical result of von Neumann, who proved that, for any unbounded self-adjoint operator
$T$, there exists a unitary operator $U$ such that the domains of $U$ and $U^*TU$ have the
zero intersection. A readable exposition on this matter is found in \cite{FW}. One can then
construct two orthonormal bases $\{e_j\}_{j=1}^\infty$ and $\{f_j\}_{j=1}^\infty$ of $\cH$
which are in $\cK$ and $\cL$, respectively. Let $P_k,Q_k$ ($k\in\bN$) be the orthogonal
projections onto the spans of $\{e_j\}_{j=1}^k$ and of $\{f_j\}_{j=1}^k$, respectively. Now
let $\sigma$ be, for instance, the geometric mean or the harmonic mean, and choose a unit
vector $\xi\in\cH$. For each $k$, since $P_k\sigma Q_k=P_k\wedge Q_k=0$ by \cite[(3.11)]{KA},
one can choose an $\eps_k>0$ such that $\<\xi,(P_k+\eps_kI)\sigma(Q_k+\eps_kI)\xi\><1/2$.
Here a sequence $\eps_k$ can be chosen as $\eps_k\searrow0$. Set $A_k:=P_k+\eps_kI$ and
$B_k:=Q_k+\eps_kI$. Then $A_k\to I$ and $B_k\to I$ in SOT, but
$\<\xi,(A_k\sigma B_k)\xi\><1/2$ for all $k$, so that $A_k\sigma B_k\not\to I$ in SOT.
Since $P_k\nearrow I$ and $Q_k\nearrow I$, it is also seen that $\sigma$ is not upward
continuous on $B(\cH)^+\times B(\cH)^+$.
\end{remark}

Next, to give a proof of the next proposition mentioned in Problem \ref{Q-5.4}, we recall
a few more basic facts on the Wasserstein distance $d_p^W$ on a complete metric space
$(X,d)$, in addition to an account in Section 2. Let $\cP_0(X)$ be the set of finitely
supported uniform probability measures on $X$, i.e., probability measures of the form
$\mu=(1/n)\sum_{i=1}^n\delta_{x_i}$ ($n\in\bN$, $x_i\in X$). It is well-known (see \cite{St})
that $\cP_0(X)$ is $d_p^W$-dense in $\cP^p(X)$ for any $p\in[1,\infty)$. When
$\mu,\nu\in\cP_0(X)$ where $\mu=(1/n)\sum_{i=1}^n\delta_{x_i}$ and
$\nu=(1/n)\sum_{i=1}^n\delta_{y_i}$, we have (see \cite[p.\ 5]{Vi})
\begin{align}\label{F-A.1}
d_p^W(\mu,\nu)=\min_{\sigma\in S_n}\Biggl[{1\over n}\sum_{i=1}^n
d^p(x_i,y_{\sigma(i)})\Biggr]^{1/p},\qquad1\le p<\infty,
\end{align}
where $S_n$ is the permutation group on $\{1,\dots,n\}$.

\begin{prop}\label{P-A.3}
If $\mu,\mu_k\in\cP^\infty(\bP)$ $(k\in\bN)$ are supported on
$\Sigma_\eps$ with $\eps\in(0,1)$ and $\mu_k\to\mu$ weakly on
$\Sigma_\eps$ with SOT, i.e., $\int_\bP f(A)\,d\mu_k(A)\to\int_\bP
f(A)\,d\mu(A)$ for every bounded SOT-continuous real function $f$ on
$\Sigma_\eps$, then $\cA(\mu_k)\to\cA(\mu)$ and
$\cH(\mu_k)\to\cH(\mu)$ in SOT.
\end{prop}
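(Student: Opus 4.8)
The plan is to settle the statement for $\cA$ first and then deduce the one for $\cH$ from it by means of the inversion map. Throughout I use that the hypothesis ``$\mu_k\to\mu$ weakly on $\Sigma_\eps$ with SOT'' is, as observed after Definition \ref{D-2.3}, exactly weak convergence of Borel probability measures on the Polish space $(\Sigma_\eps,d_\eps)$; thus the test functions available are precisely the bounded (real or, via real and imaginary parts, complex) SOT-continuous functions on $\Sigma_\eps$.

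First I would establish WOT-convergence $\cA(\mu_k)\to\cA(\mu)$. For fixed $x,y\in\cH$ the function $A\mapsto\<y,Ax\>$ is bounded (by $\eps^{-1}\|x\|\,\|y\|$) and SOT-continuous on $\Sigma_\eps$, since $A_n\to A$ in SOT forces $A_nx\to Ax$ in norm. Splitting into real and imaginary parts, applying the weak convergence, and using that the bounded linear functional $A\mapsto\<y,Ax\>$ commutes with the Bochner integral, I obtain $\<y,\cA(\mu_k)x\>\to\<y,\cA(\mu)x\>$ for all $x,y$, i.e.\ $\cA(\mu_k)\to\cA(\mu)$ in WOT.

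The heart of the matter is to upgrade this to SOT, and this is where I expect the main obstacle. Writing $T_k:=\cA(\mu_k)$ and $T:=\cA(\mu)$ (all self-adjoint and lying in $\Sigma_\eps$), I expand
\[
\|(T_k-T)x\|^2=\<x,T_k^2x\>-\<Tx,T_kx\>-\<T_kx,Tx\>+\|Tx\|^2 .
\]
The two middle terms each converge to $\|Tx\|^2$ by the WOT-convergence just proved (so together they contribute $-2\|Tx\|^2$), hence it remains to show the genuinely non-WOT fact $\<x,T_k^2x\>=\|T_kx\|^2\to\|Tx\|^2$. I would handle this with a product-measure argument: since $\cA(\mu_k)x=\int_\bP Ax\,d\mu_k(A)$ as a Bochner integral in $\cH$, pulling the fixed vectors through the integrals gives
\[
\|T_kx\|^2=\int_{\Sigma_\eps\times\Sigma_\eps}\<Ax,Bx\>\,d(\mu_k\times\mu_k)(A,B).
\]
The integrand $(A,B)\mapsto\<Ax,Bx\>$ is bounded and jointly SOT-continuous on $\Sigma_\eps\times\Sigma_\eps$, and since weak convergence of probability measures on a Polish space is preserved under products, $\mu_k\times\mu_k\to\mu\times\mu$ weakly on the Polish space $\Sigma_\eps\times\Sigma_\eps$. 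Therefore $\|T_kx\|^2\to\|Tx\|^2$, and combining with the display above yields $\|(T_k-T)x\|^2\to0$, i.e.\ $\cA(\mu_k)\to\cA(\mu)$ in SOT.

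Finally, for $\cH$ I would reduce to the arithmetic case via inversion. The map $\iota:A\mapsto A^{-1}$ is an SOT-homeomorphism of $\Sigma_\eps$ onto itself: for $A_n\to A$ in SOT within $\Sigma_\eps$ one has $(A_n^{-1}-A^{-1})x=A_n^{-1}(A-A_n)A^{-1}x$ with $\|A_n^{-1}\|\le\eps^{-1}$ and $(A-A_n)(A^{-1}x)\to0$ in norm, so $A_n^{-1}\to A^{-1}$ in SOT. Consequently $\mu_k^{-1}\to\mu^{-1}$ weakly on $\Sigma_\eps$ with SOT, because for any bounded SOT-continuous $g$ the composite $A\mapsto g(A^{-1})$ is again bounded and SOT-continuous. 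Applying the $\cA$-result gives $\cA(\mu_k^{-1})\to\cA(\mu^{-1})$ in SOT, and one more use of the SOT-continuity of $\iota$ on $\Sigma_\eps$ yields $\cH(\mu_k)=\cA(\mu_k^{-1})^{-1}\to\cA(\mu^{-1})^{-1}=\cH(\mu)$ in SOT. As indicated, the only delicate step is the square term $\|T_kx\|^2$, whose control rests on the stability of weak convergence under products on $(\Sigma_\eps,d_\eps)$.
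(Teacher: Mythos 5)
Your proposal is correct, but it reaches the conclusion for $\cA$ by a genuinely different route than the paper. The paper's proof is metric-geometric: it works with the $1$-Wasserstein distance $(d_\eps)_1^W$ built from the metric $d_\eps$ of \eqref{F-2.1}, uses that on the bounded Polish space $(\Sigma_\eps,d_\eps)$ weak convergence is equivalent to $(d_\eps)_1^W$-convergence (Villani), and then establishes the contractivity estimate $d_\eps(\cA(\mu),\cA(\nu))\le(d_\eps)_1^W(\mu,\nu)$ --- first for finitely supported uniform measures via the combinatorial formula \eqref{F-A.1}, then for general $\mu,\nu\in\cP(\Sigma_\eps)$ by $(d_\eps)_1^W$-density of $\cP_0(\Sigma_\eps)$ together with a weak-operator-topology identification of the limit. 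Your argument avoids the Wasserstein machinery entirely: you obtain weak operator convergence by testing against the bounded SOT-continuous functions $A\mapsto\<y,Ax\>$, and you upgrade to SOT through the identity
\begin{equation*}
\|\cA(\mu_k)x\|^2=\int_{\Sigma_\eps\times\Sigma_\eps}\<Ax,Bx\>\,d(\mu_k\times\mu_k)(A,B),
\end{equation*}
the joint SOT-continuity and boundedness of the kernel $(A,B)\mapsto\<Ax,Bx\>$ on $\Sigma_\eps\times\Sigma_\eps$, and the standard fact that weak convergence of probability measures on separable metric spaces is preserved under products (Billingsley); separability holds since $(\Sigma_\eps,d_\eps)$ is Polish, and the product-measure identity is justified because the evaluation $T\mapsto Tx$ commutes with the Bochner integral and Fubini applies to the bounded continuous integrand. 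Both proofs are complete; yours is more elementary and self-contained, while the paper's yields as a by-product the quantitative Lipschitz bound $d_\eps(\cA(\mu),\cA(\nu))\le(d_\eps)_1^W(\mu,\nu)$ for all $\mu,\nu\in\cP(\Sigma_\eps)$, which has independent interest beyond the qualitative continuity statement. Your reduction of the $\cH$ case --- transporting everything through the SOT-homeomorphism $A\mapsto A^{-1}$ of $\Sigma_\eps$, with the resolvent-style identity $A_n^{-1}-A^{-1}=A_n^{-1}(A-A_n)A^{-1}$ --- is exactly the paper's final step.
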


\begin{proof}
As mentioned before Definition \ref{D-2.1}, $\Sigma_\eps$ with SOT
is a Polish space with the metric $d_\eps$ in \eqref{F-2.1}. Let
$\cP(\Sigma_\eps)$ ($\subset\cPP$) be the set of Borel probability
measures supported on $\Sigma_\eps$, and consider the $1$-Wasserstein distance
$(d_\eps)_1^W$ on $\cP(\Sigma_\eps)$ with respect to $d_\eps$, as in \eqref{F-2.2} with
$d_\eps$ in place of $\dT$. Since $\sup\{d_\eps(A,B):A,B\in\Sigma_\eps\}<\infty$, it
follows from \cite[Theorem 7.12]{Vi} that $\mu_k\to\mu$ weakly on $(\Sigma_\eps,d_\eps)$
if and only if $(d_\eps)_1^W(\mu_k,\mu)\to0$. For $\mu=(1/N)\sum_{i=1}^N\delta_{A_i}$ and
$\nu=(1/N)\sum_{i=1}^N\delta_{B_i}$ in $\cP_0(\Sigma_\eps)$, we have by \eqref{F-A.1}
$$
(d_\eps)_1^W(\mu,\nu)=\min_{\sigma\in S_N}{1\over N}\sum_{i=1}^Nd_\eps(A_i,B_{\sigma(i)}).
$$
Since, for any $\sigma\in S_N$,
\begin{align*}
d_\eps(\cA(\mu),\cA(\nu))
&=\sum_{n=1}^\infty{1\over2^n}\Bigg\|\Biggl({1\over N}\sum_{i=1}^NA_i
-{1\over N}\sum_{i=1}^NB_{\sigma(i)}\Biggr)\Bigg\| \\
&\le{1\over N}\sum_{n=1}^\infty\sum_{i=1}^N{1\over2^n}\|(A_i-B_{\sigma(i)})x_n\|
={1\over N}\sum_{i=1}^Nd_\eps(A_i,B_{\sigma(i)}),
\end{align*}
we have
\begin{align}\label{F-A.2}
d_\eps(\cA(\mu),\cA(\nu))\le(d_\eps)_1^W(\mu,\nu),\qquad\mu,\nu\in\cP_0(\Sigma_\eps).
\end{align}
For any $\mu,\nu\in\cP(\Sigma_\eps)$ choose sequences $\mu_k,\nu_k\in\cP_0(\Sigma_\eps)$
such that $(d_\eps)_1^W(\mu_k,\mu)\to0$ and $(d_\eps)_1^W(\nu_k,\nu)\to0$. Since
$d_\eps(\cA(\mu_k),\cA(\mu_l))\le(d_\eps)_1^W(\mu_k,\mu_l)\to0$ as $k,l\to\infty$, it
follows that $\cA(\mu_k)$ converges to $A_0\in\Sigma_\eps$ in SOT. Since
$(d_\eps)_1^W(\mu_k,\mu)\to0$ implies that
$\int_{\Sigma_\eps}\<x,Ay\>\,d\mu_k(A)\to\int_{\Sigma_\eps}\<x,Ay\>\,d\mu(A)$,
i.e., $\<x,\cA(\mu_k)y\>\to\<x,\cA(\mu)y\>$ for any $x,y\in\cH$, so $\cA(\mu_k)\to\cA(\mu)$
in the weak operator topology. Hence we find that $A_0=\cA(\mu)$ so that
$\cA(\mu_k)\to\cA(\mu)$ in SOT, and similarly $\cA(\nu_k)\to\cA(\nu)$ in SOT. Therefore,
by applying \eqref{F-A.2} to $\mu_k,\nu_k$ and taking the limit, inequality \eqref{F-A.2}
can extend to
$$
d_\eps(\cA(\mu),\cA(\nu))\le(d_\eps)_1^W(\mu,\nu),\qquad\mu,\nu\in\cP(\Sigma_\eps),
$$
from which we have $\cA(\mu_k)\to\cA(\mu)$ in SOT if $\mu,\mu_k\in\cP(\Sigma_\eps)$ and
$\mu_k\to\mu$ weakly on $(\Sigma_\eps,d_\eps)$, i.e., $(d_\eps)_1^W(\mu_k,\mu)\to0$.

The assertion for $\cH$ immediately follows from that for $\cA$ since $A\mapsto A^{-1}$ is
a homeomorphic self-map on $\Sigma_\eps$.
\end{proof}

\section{Some proofs}

\subsection{Proof of \eqref{F-7.14}}

Let $A,B\in\bP$. The well-known Ky Fan majorization (see \cite{Hi,MOA}) says that
$$
\lambda(A+B)\prec\lambda(A)+\lambda(B).
$$
Replacing $A,B$ with $A^{-1},B^{-1}$ and applying the convex function $x^{-1}$ on
$(0,\infty)$, we have
$$
\lambda(A^{-1}+B^{-1})^{-1}\prec_w(\lambda(A^{-1})+\lambda(B^{-1}))^{-1}.
$$
Note that $\lambda(A^{-1}+B^{-1})^{-1}=\lambda^\uparrow(A:B)$ and
$\lambda(A^{-1})=\lambda^\uparrow(A)^{-1}$, where $A:B:=(A^{-1}+B^{-1})^{-1}$, the
\emph{parallel sum} of $A,B$ and $\lambda^\uparrow(A)$ is the eigenvalues of $A$ arranged in
increasing order. Therefore,
$$
\lambda^\uparrow(A:B)\prec_w\lambda^\uparrow(A):\lambda^\uparrow(B),
$$
Since $(\lambda^\uparrow(A):\lambda^\uparrow(B))^\downarrow=\lambda(A):\lambda(B)$, the above
means that
\begin{align}\label{F-B.1}
\lambda(A:B)\prec_w\lambda(A):\lambda(B).
\end{align}
Now consider the integral expression in \eqref{F-7.2}. Note that
\begin{align*}
{1+t\over t}\|(tA):B\}\|&\le{1+t\over t}\{(t\|A\|):\|B\|\}
={(1+t)\|A\|\,\|B\|\over t\|A\|+\|B\|} \\
&\le\max\{\|A\|,\|B\|\},\qquad t\in(0,\infty).
\end{align*}
Hence, by approximating the integral with Riemann sums, we have
$$
\int_{(0,\infty)}{1+t\over t}\{(tA):B\}\,dm(t)
=\lim_{n\to\infty}\sum_{j=1}^{n^2}
m\biggl(\biggl({j-1\over n},{j\over n}\biggr]\biggr){1+{j\over n}\over{j\over n}}
\biggl\{\biggl({j\over n}\,A\biggr):B\biggr\}
$$
in the operator norm, and similarly
\begin{align*}
&\int_{(0,\infty)}{1+t\over t}\{(t\lambda_i(A)):\lambda_i(B)\}\,dm(t) \\
&\qquad=\lim_{n\to\infty}\sum_{j=1}^{n^2}
m\biggl(\biggl({j-1\over n},{j\over n}\biggr]\biggr){1+{j\over n}\over{j\over n}}
\biggl\{\biggl({j\over n}\,\lambda_i(A)\biggr):\lambda_i(B)\biggr\},\qquad1\le i\le N.
\end{align*}
Hence, from \eqref{F-7.2} and the Ky Fan majorization, we have for $1\le k\le N$,
\begin{align*}
&\sum_{i=1}^k\lambda_i(A\sigma B) \\
&\quad\le\lim_{n\to\infty}\sum_{i=1}^k\Biggl[a\lambda_i(A)+b\lambda_i(B)+\sum_{j=1}^{n^2}
m\biggl(\biggl({j-1\over n},{j\over n}\biggr]\biggr){1+{j\over n}\over{j\over n}}
\lambda_i\biggl(\biggl({j\over n}\,A\biggr):B\biggr)\Biggr] \\
&\quad\le\lim_{n\to\infty}\sum_{i=1}^k\Biggl[a\lambda_i(A)+b\lambda_i(B)+\sum_{j=1}^{n^2}
m\biggl(\biggl({j-1\over n},{j\over n}\biggr]\biggr){1+{j\over n}\over{j\over n}}
\biggl\{\biggl({j\over n}\,\lambda_i(A)\biggr):\lambda_i(B)\biggr\}\Biggr] \\
&\quad=\sum_{i=1}^k\{\lambda_i(A)\sigma\lambda_i(B)\},
\end{align*}
where we have used \eqref{F-B.1} for the above latter inequality. Therefore,
$\lambda(A\sigma B)\prec_w\lambda(A)\sigma\lambda(B)$.\qed

\subsection{Proofs of \eqref{F-7.12} and \eqref{F-7.13}}

Let $\mu\in\cPP$ and choose an $\eps>0$ such that $\mu$ is supported on $\Sigma_\eps$. Since
$\Sigma_\eps$ is compact in the operator norm thanks to $\dim\cH<\infty$, one can choose,
for any $n\in\bN$, a finite set $\{A_1^{(n)},\dots,A_{m_n }^{(n)}\}$ in $\bP$ such that
$\Sigma_\eps\subset\bigcup_{j=1}^{m_n}\{A\in\bP:\|A-A_j^{(n)}\|<1/n\}$. So one can define
disjoint Borel sets $\cO_j^{(n)}$ ($1\le j\le m_n$) such that
$\cO_j^{(n)}\subset\{A:\|A-A_j^{(n)}\|<1/n\}$ for $1\le j\le m_n$ and
$\Sigma_\eps=\bigcup_{j=1}^{m_n}\cO_j^{(n)}$. Then it is obvious that
$$
\lim_{n\to\infty}\bigg\|\cA(\mu)-\sum_{j=1}^{m_n}\mu(\cO_j^{(n)})A_j^{(n)}\bigg\|=0,
\quad
\lim_{n\to\infty}\bigg\|\cH(\mu)-\Biggl[\sum_{j=1}^{m_n}\mu(\cO_j^{(n)})A_j^{(n)-1}
\Biggr]^{-1}\bigg\|=0.
$$
Moreover, for every $i=1,\dots,N$,
$$
\int_\bP\lambda_i(A)\,d\mu(A)
=\lim_{n\to\infty}\sum_{j=1}^{m_n}\mu(\cO_j^{(n)})\lambda_i(A_j^{(n)}),
$$
$$
\biggl[\int_\bP\lambda_i(A)^{-1}\,d\mu(A)\biggr]^{-1}
=\lim_{n\to\infty}\Biggl[\sum_{j=1}^{m_n}
\mu(\cO_j^{(n)})\lambda_i(A_j^{(n)})^{-1}\Biggr]^{-1}.
$$
Note that
$$
\cA(\lambda_*\mu)=\biggl(\int_\bP\lambda_i(A)\,d\mu(A)\biggr)_{i=1}^N,\quad
\cH(\lambda_*\mu)=\biggl(\biggl[
\int_\bP\lambda_i(A)^{-1}\,d\mu(A)\biggr]^{-1}\biggr)_{i=1}^N.
$$
By the Ky Fan majorization we have
$$
\lambda\Biggl(\sum_{j=1}^{m_n}\mu(\cO_j^{(n)})A_j^{(n)}\Biggr)
\prec\sum_{j=1}^{m_n}\mu(\cO_j^{(n)})\lambda(A_j^{(n)}),
$$
$$
\lambda\Biggl(\Biggl[\sum_{j=1}^{m_n}\mu(\cO_j^{(n)})A_j^{(n)-1}\Biggr]^{-1}\Biggr)
\prec_w\Biggl[\sum_{j=1}^{m_n}\mu(\cO_j^{(n)})\lambda(A_j^{(n)})^{-1}\Biggr]^{-1}.
$$
where the last weak majorization is verified similarly to \eqref{F-B.1}. Letting $n\to\infty$
gives $\cA(\mu)\prec\cA(\lambda_*\mu)$ and $\cH(\mu)\prec_w\cH(\lambda_*\mu)$.\qed

\end{document}